\documentclass[a4paper,12pt]{amsart}

\usepackage{fullpage}
\usepackage{amsmath,amssymb,graphicx,psfrag}
\usepackage[T1]{fontenc}
\usepackage{amsmath,amssymb,ifthen}
\usepackage{color}
\usepackage{moreverb}
\usepackage{enumitem}
\usepackage{pgfplots}
\usepackage{adjustbox}



\renewcommand{\H}{\widetilde{H}}

\newcommand{\N}{\mathbb{N}}
\newcommand{\R}{\mathbb{R}}

\newcommand{\Approx}{\mathbb{A}}
\newcommand{\HH}{\mathcal{H}}

\newcommand{\MM}{\mathcal{M}}
\newcommand{\OO}{\mathcal{O}}
\newcommand{\QQ}{\mathcal{Q}}
\newcommand{\RR}{\mathcal{R}}
\newcommand{\PP}{\mathcal{P}}
\renewcommand{\SS}{\mathcal{S}}
\newcommand{\TT}{\mathcal{T}}
\newcommand{\NN}{\mathcal{N}}
\newcommand{\UU}{\mathcal{U}}
\newcommand{\XX}{\mathcal{X}}

\newcommand{\dual}[3][]{#1\langle#2\,,\,#3#1\rangle}
\newcommand{\bigdual}[3][]{#1\Big\langle#2\,,\,#3#1\Big\rangle}

\newcommand{\enorm}[2][]{#1|\!#1|\!#1|\,#2\,#1|\!#1|\!#1|}
\newcommand{\norm}[3][]{#1\|#2#1\|_{#3}}
\newcommand{\bignorm}[3][]{#1 \Big\|#2#1 \Big\|_{#3}}
\newcommand{\diam}{{\rm diam}}
\newcommand{\set}[3][\big]{#1\{#2\,:\,#3#1\}}
\newcommand{\refine}{{\rm refine}}

\newcommand{\bilinb}[3][]{#1 b(#2\,,\,#3#1)}
\newcommand{\bilina}[3][]{#1 a(#2\,,\,#3#1)}


\def\Cmesh{C_{\rm mesh}}
\def\Cred{C_{\rm red}}
\def\Crel{C_{\rm rel}}
\def\Cson{C_{\rm son}}

\def\Copt{C_{\rm opt}}
\def\Cinv{C_{\rm inv}}
\def\Cinvtilde{\widetilde{C}_{\rm inv}}

\def\Cmark{C_{\rm mark}}
\def\Cstab{C_{\rm stb}}
\def\Clin{C_{\rm lin}}
\def\Cmon{C_{\rm mon}}
\def\Cshape{C_{\rm shape}}

\def\qmesh{q_{\rm mesh}}
\def\qred{q_{\rm red}}
\def\qlin{q_{\rm lin}}

\def\thetaopt{\theta_{\rm opt}}

\def\llin{\ell_{\rm lin}}
\def\lopt{\ell_{\rm opt}}
\def\lcea{\ell_{\text{C\'ea}}}

\def\nv{\mathbf{n}}
\def\g{g}

\def\patch{\omega}

\newcommand{\T}{\mathbb{T}}

\newcommand{\po}{\partial \Omega}


\def\exto{E_{0,\Gamma}}

\def\tildphi{\phi}
\def\tildpsi{\psi}
\def\tildv{v}

\def\slpk{\widetilde{V}_k}

\def\slpnull{\widetilde{V}_0}
\def\slok{V_k}
\def\slonull{V_0}

\def\dlpk{\widetilde{K}_k}

\def\dlpnull{\widetilde{K}_0}
\def\dlok{K_k}
\def\dlonull{K_0}

\def\adlok{K'_k}
\def\adlonull{K'_0}

\def\hsok{W_k}
\def\hsonull{W_0}

\def\Sv{\widetilde{S}_{V,k}}
\def\Av{\widetilde{A}_{V,k}}
\def\Sk{\widetilde{S}_{K,k}}
\def\Ak{\widetilde{A}_{K,k}}

\def\Sopv{S_{V,k}}
\def\Aopv{A_{V,k}}
\def\Sopk{S_{K,k}}
\def\Aopk{A_{K,k}}

\def\Kop{\mathcal{C}}
\def\Kopk{\mathcal{C}_k}


\newcommand{\traceint}{\gamma^{\rm int}_0}
\newcommand{\traceext}{\gamma_0^{\rm ext}}
\newcommand{\trace}{\gamma_0}
\newcommand{\condint}{\gamma_1^{\rm int}}
\newcommand{\condext}{\gamma_1^{\rm ext}}

\def\Omegaext{\Omega^{\rm ext}}
\def\Tref{T_{\rm ref}}

\def\coarse{\bullet}
\def\fine{\circ}


\newtheorem{lemma}{Lemma}

\newtheorem{theorem}[lemma]{Theorem}
\newtheorem{algorithm}[lemma]{Algorithm}
\newtheorem{remark}[lemma]{Remark}
\newtheorem{proposition}[lemma]{Proposition}

\renewcommand{\subsection}[1]{\refstepcounter{subsection}\medskip{\bf\thesubsection.~#1.}}

\newenvironment{explain}{\begin{list}{$\bullet$}{%
\setlength{\labelsep}{2.3mm}%
\setlength{\labelwidth}{3mm}%
\setlength{\leftmargin}{5mm}%
}}{\end{list}}



\usepackage{fancyhdr}
\lfoot{\small\today}
\cfoot{\small\thepage}
\lhead{}
\rhead{}

\advance\footskip0.4cm
\textheight=54pc    
\advance\textheight-0.4cm
\calclayout
\pagestyle{fancy}




\title{Adaptive BEM with optimal convergence rates \\ for the Helmholtz equation}

\author{Alex Bespalov}
\address{School of Mathematics, University of Birmingham, Edgbaston, Birmingham B15 2TT, UK}
\email{A.Bespalov@bham.ac.uk}

\author{Timo Betcke}
\address{Centre for Inverse Problems, University College London,  Gower Street, London WC1E 6BT, UK}
\email{T.Betcke@ucl.ac.uk}

\author{Alexander Haberl}
\author{Dirk Praetorius}
\address{TU Wien, Institute for Analysis and Scientific Computing, Wiedner Hauptstra\ss{}e 8--10, 1040 Wien, Austria}
\email{Alexander.Haberl@asc.tuwien.ac.at \qquad\rm(corresponding author)}
\email{Dirk.Praetorius@asc.tuwien.ac.at}

\keywords{boundary element method, Helmholtz equation, a posteriori error estimate, adaptive
	algorithm, convergence, optimality}

\begin{document}

\begin{abstract}
We analyze an adaptive boundary element method for the weakly-singular and hypersingular
integral equations for the 2D and 3D Helmholtz problem. The proposed adaptive algorithm 
is steered by a residual error estimator and does not rely on any {\sl a priori} information 
that the underlying meshes are sufficiently fine.
We prove convergence of the error estimator with optimal algebraic rates, independently 
of the (coarse) initial mesh. 
As a technical contribution, we prove certain local inverse-type estimates for the boundary integral operators associated with
the Helmholtz equation.
\end{abstract}

\maketitle
\thispagestyle{fancy}

\vspace*{-10mm}
\section{Introduction}
\label{section:introduction}

Adaptive boundary element methods (ABEMs) with (dis)continuous piecewise polynomials for second order elliptic problems are well 
understood if the boundary integral operator is strongly elliptic. In particular for the Laplace equation and lowest order boundary elements, 
optimal algebraic rates of convergence have been proved in~\cite{ffkmp:part1,ffkmp:part2,fkmp}
for polyhedral boundaries and in~\cite{gant} for smooth boundaries. An abstract framework is also found in~\cite{axioms}.
With the recent work~\cite{invest}, these results can also be extended to piecewise smooth boundaries. 

In recent years, isogeometric analysis has lead to a variety of works proving optimal rates for ABEM using spline basis functions; 
see, e.g.,~\cite{igabem,fghp16,fghp17} for the Laplace problem in two dimension as well as~\cite{gantnerphd} for a generalization 
to second-order linear elliptic PDEs in three dimensions. 

On the other hand, boundary element methods (BEMs) for the Helmholtz equation are very popular and used in many applications;
see, e.g.,~\cite{MR2916382,MR700400} for an overview of techniques in  acoustic scattering. 
To our knowledge, there are no results concerning optimal convergence of ABEM for indefinite problems, even for
sufficiently fine initial meshes. With this paper, we fill this gap in the theory.

In this work, we generalize existing results concerning optimal convergence of ABEM for the Laplace equation to the Helmholtz equation. 
To this end, let $\Omega \subset \R^d$ with $d = 2,3$ be a bounded Lipschitz domain with boundary  $\partial \Omega \supseteq \Gamma$.
 We consider ABEM for the Dirichlet or Neumann boundary value problem for the Helmholtz equation, i.e., 
 \begin{align}
 \begin{split}
 \label{eq:helmholtz}
	 - \Delta u - k^2 u &= 0 \text{ in } \Omega 
	 \quad \text{subject to either} \quad
	 u = g \text{ on }\Gamma
	 \quad \text{or} \quad \partial_{\nv} \, u =\phi \text{ on }\Gamma.
\end{split}
\end{align}
where $k \in \R$ denotes the wavenumber.
Independently, whether a direct or an indirect approach is used, the Dirichlet boundary value problems leads to the following weakly-singular integral equation. 
Suppose that $k^2$ is not an eigenvalue of the interior Dirichlet Problem (IDP).
Given a right-hand side $f \in H^{1/2}(\Gamma)$, find $\phi \in  \H^{-1/2}(\Gamma)$ such that
\begin{align}\label{eq:modelproblem}
	\slok \phi = f \quad \text{on} \,\, \Gamma,
\end{align}
where $\slok$ denotes the single-layer operator associated with the Helmholtz equation.
For $k=0$,  $\slok$ coincides with the single-layer operator of the Laplace equation. In this case, we refer to~\cite{fkmp,ffkmp:part1,gantumur}, where optimal algebraic convergence rates for the weakly-singular integral equation for the Laplace operator are shown.

In this paper, we focus on the case $k\neq0$. We build on the abstract framework developed in~\cite{helmholtz}
and propose an adaptive algorithm (Algorithm~\ref{algorithm}) for the numerical solution of problem~\eqref{eq:modelproblem}, which
does not require any {\sl a priori} information on whether the
initial mesh (or any locally refined mesh generated by the algorithm) is sufficiently fine.
In the algorithm, 
the local mesh-refinement is guided by the weighted-residual {\sl a~posteriori} error estimator,
and the classical adaptive loop is complemented by
an additional step that performs a uniform mesh-refinement
if the Galerkin formulation of~\eqref{eq:modelproblem} does not admit a unique solution.
Even though the latter is unlikely to happen in practice, this case cannot be avoided in theory.

The main result of this work is Theorem~\ref{theorem:optimal}.
It states that Algorithm~\ref{algorithm} generates a convergent sequence of discrete solutions and, moreover,
the generated sequence of {\sl a~posteriori} error estimators converges linearly
with an optimal algebraic rate.
Theorem~\ref{theorem:optimal} is the first result that proves optimal convergence
rates for ABEM for the Helmholtz equation.
In addition to that, we emphasize that our adaptive algorithm
effects the optimal rate for any given, possibly coarse, initial mesh.
Although the presentation focuses on the weakly-singular equation in~\eqref{eq:modelproblem},
the adaptive algorithm and the main result of Theorem~\ref{theorem:optimal}
extend immediately to the hypersingular integral equation
corresponding to the Neumann boundary value problem in~\eqref{eq:helmholtz}.

The proof of Theorem~\ref{theorem:optimal} relies on the abstract framework developed in~\cite{helmholtz}
for compactly perturbed elliptic problems and
requires verification of the so-called {\sl axioms of adaptivity}~\cite{axioms}
for the weighted-residual error estimator.
In this work, we verify these by employing novel inverse-type estimates
for the underlying boundary integral operators.
These estimates exploit potential decompositions from~\cite{Mel12} and
generalize existing results for the Laplacian ($k = 0$)
to the case of an arbitrary wavenumber $k \ge 0$.
%
%
%
%
%
%
%

\smallskip
{\bf Outline.}\quad
This work and its main results are structured as follows: 
Section~\ref{section:definitions} recaps the functional analytic framework and introduces the involved integral operators 
as well as the Galerkin discretization by piecewise polynomials. 
In Section~\ref{section:inverse_estimate}, we prove inverse-type estimates in the style of~\cite{invest} for the Helmholtz operators. 
The exact adaptive algorithm and the {\sl a~posteriori} weighted-residual error estimator are given in Section~\ref{section:algorithm}.  
The main result (Theorem~\ref{theorem:optimal}) of this work is given in Section~\ref{section:optimalconvergence}. 
Further, Section~\ref{section:hypsing} comments on the extension of the analysis to the hypersingular equation. 
In the last Section~\ref{section:numerics}, we underpin our theoretical findings with some numerical experiments. 
A rigorous proof of the essential estimator properties and Theorem~\ref{theorem:optimal} is
	given in the Appendix.

Throughout all statements, the dependencies of all constants are given. In proofs, we may abbreviate the notation by use of the symbol $\lesssim$ which indicates
$\leq$ up to some multiplicative constant which is clear from the context. 
Analogously, $\gtrsim$ indicates $\geq$ up to a multiplicative constant. 
The symbol $\simeq$ states that both estimates $\lesssim$ and $\gtrsim$ hold.


\section{Preliminaries}
\label{section:definitions}

Let $\Omega \subset \R^d$ with $d = 2,3$ be a bounded Lipschitz domain with
piecewise $C^\infty$-boundary $\partial \Omega$ and exterior normal vector $\nv(y)$ for every $y \in \po$; see~\cite[Definition 2.2.10]{sauterschwab}.
Let $\Omegaext := \R^d \setminus \overline{\Omega}$ denote the corresponding exterior domain.
We suppose that $\Gamma = \partial \Omega$ or $\emptyset \neq \Gamma \subset \partial \Omega$ is a 
relative open set which stems from a Lipschitz dissection $\partial \Omega = \Gamma \cup \partial \Gamma \cup (\partial \Omega \setminus \Gamma)$;
see~\cite[p. 99]{mclean}.

\subsection{Sobolev spaces}
\label{subsection:sobolev}
For $s \in \{-1/2,0,1/2\}$, the Sobolev spaces $H^{1/2+s}(\po)$ are defined 
as in \cite[p. 100]{mclean} via Bessel-potentials and the Lip\-schitz parametrization of $\po$.
Let $\dual{\cdot}{\cdot}$ denote the duality pairing which extends the
$L^2(\po)$-scalar product. For $s \in \{-1/2,0,1/2\}$, the negative-order Sobolev spaces are defined by duality $H^{-(1/2+s)}(\po):=H^{1/2+s}(\po)'$.

If $\Gamma \subsetneqq \po$, let $\exto$ denote the extension operator which extends a function on $\Gamma$ to $\po$ by zero.
Then, the spaces $H^{1/2+s}(\Gamma)$ and $\H^{1/2+s}(\Gamma)$ are defined as in~\cite{mclean} by
\begin{eqnarray*}
	H^{1/2+s} (\Gamma) &:= \{ v|_\Gamma: v \in H^{1/2+s}(\po) \}, 
	  &\norm{v}{H^{1/2+s} (\Gamma)} := \inf \{ \norm{w}{H^{1/2+s}(\po)}: w|_\Gamma = v \}, \\
	\H^{1/2+s} (\Gamma) &:= \{ v: \exto v \in H^{1/2+s}(\po) \},  &\norm{v}{ \H^{1/2+s} (\Gamma)} := \norm{\exto v}{H^{1/2+s}(\po)}.
\end{eqnarray*}
For $s=1/2$, we have the following equivalences
\begin{align*}
\norm{u}{H^1(\po)}^2 \simeq \norm{u}{L^2(\po)}^2 + \norm{\nabla_\Gamma u}{L^2(\po)}^{2} \quad \text{as well as} \quad 
\norm{u}{\H^1(\Gamma)}^2 \simeq \norm{u}{L^2(\Gamma)}^2 + \norm{\nabla_\Gamma u}{L^2(\Gamma)}^{2};
\end{align*}
see, e.g.~\cite[Facts 2.1]{invest}. For $s \in \{-1/2,0,1/2\}$, the corresponding negative-order spaces are obtained by duality
\begin{align*}
	\H^{-(1/2+s)}(\Gamma) := H^{1/2+s}(\Gamma)' \quad \text{and} \quad H^{-(1/2+s)}(\Gamma) := \H^{1/2+s}(\Gamma)'.
\end{align*}
We emphasize that, for all $\psi \in L^2(\Gamma)$, it holds that $\exto \psi \in H^{-1/2}(\Omega)$
as well as $\norm{\psi}{\H^{-1/2}(\Gamma)}= \norm{\exto \psi}{H^{-1/2}(\po)}$.
We note the continuous inclusions
\begin{align*}
	\H^{\pm (1/2+s)}(\Gamma) \subseteq H^{\pm (1/2+s)}(\Gamma) \quad \text{and} \quad \H^{\pm (1/2+s)}(\po) = H^{\pm (1/2+s)}(\po).
\end{align*}
We make the following convention: 
If $\Gamma \subsetneqq \po$, and it is clear from the context, 
we identify any $v \in \H^{1/2+s}(\Gamma)$ with its extension $\exto v \in H^{1/2+s}(\po)$. 
Further, the operators $\slpk,\slok, \adlok$ are often applied to functions in $L^2(\Gamma)$, resp.\ 
$\dlpk,\dlok,\hsok$ are applied to functions in $\H^{1/2}(\Gamma)$. To ease notation, 
for $\psi \in L^2(\Gamma)$ and $v \in \H^{1/2}(\Gamma)$, we implicitly extend by zero, e.g., 
we write $\slok \psi$ instead of $\slok(\exto \psi)$ and $\dlok v$ instead of $\dlok(\exto v)$.

\subsection{Trace operators} 
We denote by $\traceint: H^1(\Omega) \to H^{1/2}(\po)$ the interior trace operator.
For $u \in H^1_{\Delta} := \{u \in H^1(\Omega): -\Delta u \in L^2(\Omega) \}$, we define the interior conormal derivative operator
via Green's first identity as 
\begin{align*}
	\condint\!:\! H^1_{\Delta}(\Omega) \to H^{-1/2}(\po), \, \dual{\condint u}{\traceint v}_{\po} := \dual{\nabla u\!}{\!\nabla v}_{\Omega} - \dual{- \Delta u\!}{\!v}_{\Omega} \quad \forall v \in H^1(\Omega).
\end{align*}
To define the exterior counterparts $\traceext$ and $\condext$, let $U \subset \R^d$ be a bounded Lipschitz domain such that 
$\overline{\Omega} \subset U \subset \R^d$. Then, the exterior trace operator $\traceext: H^1(U \setminus\overline{\Omega}) \to H^{1/2}(\po)$ 
is defined analogously as restriction to $\po$.
The exterior conormal derivative operator $\condext: H^1_{\Delta}(U \setminus \overline{\Omega}) \to H^{-1/2}(\po)$ 
is defined by $\dual{\condext u}{\traceext v}_{\po} := \dual{\nabla u}{\nabla v}_{U \setminus \Omega} - \dual{- \Delta u}{v}_{U \setminus \Omega}$ 
for all $v \in H^1(U \setminus \overline{\Omega})$ with $\traceext v =0$ on $\partial U$.
If a function $u$ admits interior and exterior trace, resp., interior and exterior conormal derivative, we define the jump
\begin{align*}
	[\gamma_1 u]:= \condext u - \condint u \quad \text{resp.} \quad [u] = \traceext u - \traceint u.
\end{align*}
We denote the surface gradient by $\nabla_{\Gamma}(\cdot)$.

\subsection{Layer potentials and boundary integral operators} 
\label{subsection:operators}
Let $k$ denote the wavenumber of the Helmholtz equation. For $k>0$, the Helmholtz kernel is given by
\begin{align}\label{eq:helmholtz_kernel}
 G_k(x,y) = \frac{i}{4} H^{(1)}_0 (k|x-y|)
 \quad\text{for $d=2$\quad resp.\quad}
 G_k(x,y) = \frac{e^{ik|x-y|}}{4 \pi |x-y|}
 \quad\text{for $d=3$},
\end{align}%
where $H^{(1)}_0$ is the first-kind Hankel function of order zero. 
For $k<0$, we define $G_{k} := \overline{G_{-k}}$ and if $k=0$, we employ the fundamental solution of
the Laplace operator
\begin{align} \label{eq:laplace_kernel}
 G_0(x,y) = -\frac{1}{2 \pi} \log|x-y|
 \quad\text{for $d=2$\quad resp.\quad}
 G_0(x,y) = \frac{1}{4 \pi |x-y|}
 \quad\text{for $d=3$}.
\end{align}%
For all $k \in \R$, the single-layer and double-layer potential operators are defined as
\begin{align*}
	(\slpk \phi) (x) := \int_{\po} G_k(x,y) \phi(y) \, dy 
	\quad \text{and} \quad 
	(\dlpk \phi ) (x) := \int_{\po} \partial_{\nv(y)} G_k(x,y) \phi(y) \, dy 
\end{align*}
which give rise to corresponding bounded linear operators $\slpk \in L\big( H^{-1/2}(\po); H^1(U)\big)$ 
and $\dlpk \in  L\big( H^{1/2}(\po); H^1(U \setminus \po)\big)$.

The single-layer potential induces the single-layer operator 
$$\slok := \traceint \slpk : H^{-1/2 +s}(\po) \to H^{1/2+s} (\po)$$
for $-1/2 < s \leq 1/2$; see, e.g., Theorem~\ref{theorem:invest} in the case of $s=1/2$. 
For $k=0$, $\slonull$ is even a well-defined isomorphism for $-1/2 \leq s \leq 1/2$, and elliptic as well as symmetric for $s=0$.
For $k \neq 0$, 
the single-layer operator $\slok$ is invertible, if and only if $k^2$ is not an eigenvalue of the interior Dirichlet problem (IDP) for the Laplace operator, i.e., it holds that
\begin{align}
	\tag{IDP}
	\forall u \in H^1(\Omega) \quad 
	\Big(- \Delta u =k^2 u \quad \text{with}  \quad \traceint u = 0 
	\quad \Longrightarrow \quad u = 0 \quad \text{in} \,\ \Omega \Big);
\end{align}
see, e.g.,~\cite[Theorem 3.9.1]{sauterschwab}.
Throughout, we assume that $k^2$ satisfies (IDP). 

The double-layer potential induces the double-layer operators 
$$\dlok^\sigma := \trace^\sigma \dlpk: H^{1/2+s}(\po) \to H^{1/2+s}(\po)$$
with $\sigma \in \{ \textrm{int}, \textrm{ext} \}$ and $-1/2 < s \leq  1/2$; see Theorem~\ref{theorem:invest} for $s=1/2$. 
Combining the two operators, we define $\dlok:= \frac{1}{2}(\dlok^{\textrm{int}} + \dlok^{\textrm{ext}}) : H^{1/2+s}(\po) \to H^{1/2+s}(\po)$.

We define the adjoint double-layer operator $\adlok: H^{-1/2+s} (\po) \to H^{-1/2+s}(\po)$ by $\adlok:= -\frac{1}{2} \textrm{Id} + \condint \slpk$. 
Further, the hypersingular operator is given by $\hsok := -\condint \dlpk:  H^{1/2+s} (\po) \to H^{-1/2+s}(\po)$.
For $k=0$, the operators $\dlonull$, $\adlonull$, as well as $\hsonull$ are even well defined 
 for $s = \pm 1/2$; see~\cite[Remark 3.1.18]{sauterschwab}.


\subsection{Admissible triangulations}
Let  $\Tref$ denote the reference element defined by 
\begin{align*}
\Tref=(0,1)
\quad \text{for $d=2$\quad resp.\quad}
\Tref ={\rm conv} \{(0,0),(1,0),(0,1) \}
\quad \text{for $d=3$},
\end{align*}
i.e., $\Tref$
is the open unit interval for $d=2$ and 
the Kuhn simplex for $d=3$. 
%
A set $\TT_\coarse$ is a regular triangulation of $\Gamma$, if the following conditions {\rm (a)}--{\rm (d)} hold:
\begin{enumerate}[label= {\rm (\alph*)}]
\item Each $T \in \TT_\coarse$ is a relative open subset of $\Gamma$, and there exists a bijective element map $\g_T \in C^\infty(T_{\rm ref},T)$ such that $\g_T(\overline{\Tref})=\overline{T}$.

\item The union of all elements cover $\Gamma$, i.e., $\overline{\Gamma} = \bigcup_{T \in \TT_\coarse} \overline{T}$.

\item For all $T,T' \in \TT_\coarse$, the intersection $ \overline{T} \cap \overline{T'}$ is either empty, or a joint node ($d \geq 2$), or  a joint facet ($d = 3$).

\item In the case of $d=3$, there holds the following: If  $\overline{T} \cap \overline{T'}$ is a facet, there exist facets $f,f' \subseteq \partial \Tref$ of $\Tref$ such that $ \overline{T} \cap \overline{T'} = \g_T(f)  = \g_{T'}(f')$, and the composition
$\g_T^{-1} \circ \g_{T'} : f' \to f$ is even affine. 
 \end{enumerate}

The element patch of $T \in \TT_\coarse$,  is given by
\begin{align*}
	\patch_{\coarse}(T):= {\rm interior} \Big( \bigcup_{{T' \in \TT_\coarse} \atop {\overline{T} \cap \overline{T'} \neq \emptyset}} \overline{T'} \Big). 
\end{align*}
For a set of elements $\UU \subseteq \TT_\coarse$, let $\patch_{\coarse}(\UU) := \{T' \in \TT_\coarse: \,\, \exists T \in \UU: \,\, T' \subseteq \patch_{\coarse}(T)	\}$.
Define the local mesh-size function
$h_\coarse \in L^\infty(\Gamma)$ by $h_\coarse|_T := h_\coarse(T) := |T|^{1/(d-1)}$ for all $T \in \TT_\coarse$.

To introduce shape regularity, let $G_T(x) := D \, \g_T(x)^\intercal D \, \g_T(x) \in \R^{(d-1) \times (d-1)}$ be the symmetric Gramian matrix of $g_T$ and 
$\lambda_{\textrm{min}} (G_T(x))$ and $\lambda_{\textrm{max}} (G_T(x))$ its extremal eigenvalues. A regular triangulation $\TT_\bullet$ is
$\gamma$-shape regular triangulation, if the following holds:
\begin{itemize} 
	\item 	For all $T \in \TT_\bullet$, the corresponding element maps $g_T(\cdot)$ satisfy that
	\begin{align}
		\sigma(T) := \displaystyle\sup_{x \in \Tref} \Big( \frac{h_\bullet(T)^2}{\lambda_{\textrm{min}} (G_T(x))} + \frac{\lambda_{\textrm{max}} (G_T(x))}{h_\bullet(T)^2} \Big) \leq \gamma.
	\end{align}
	
	\item If $d=2$, it additionally holds that
	\begin{align}
		\widetilde{\sigma}(\TT_\bullet) := \max_{{T,T' \in \TT_\bullet}\atop{T'\subseteq\omega_\bullet(T)}} \frac{|T|}{|T'|} \leq \gamma.
	\end{align}
\end{itemize}
Note that the Gramian matrix $G_T(x)$ is symmetric and positive definite. This implies that $0 \leq \lambda_{\textrm{min}} (G_T) \leq \lambda_{\textrm{max}} (G_T)$ and hence, $\sigma(T) \geq 0$. 
The additional assumption for $d=2$ ensures that the mesh-size of neighboring elements remains comparable. 

Throughout, we assume that $\TT_\coarse$ is a $\gamma$-shape regular triangulation.
The next lemma recaps some important properties of $\gamma$-shape regular meshes; see~\cite[Lemma 2.6]{invest}. 

\begin{lemma}\label{lemma:mesh}
There exists a constant $C>0$ that depends only on 
	$\gamma$ and the Lipschitz character of $\po$, such that the following assertions {\rm (a)}--{\rm (d)} hold:
	\begin{enumerate}[label= {\rm (\alph*)}]
		\item For all $T,T' \in \TT_\coarse$ such that $\overline{T} \cap \overline{T'} \neq \emptyset$, it holds that $h_\coarse(T) \leq C h_\coarse(T')$.
		
		\item The number of elements in an element patch is bounded by $C$.
		
		\item For all $T \in \TT_\coarse$ and all elements $T',T'' \subseteq \omega_{\coarse}(T)$, 
		there exists a sequence $T' = T_1, \ldots,T_n=T''$ with $T_i \subseteq \omega_{\coarse}(T)$ for all $1 \leq i \leq n$ 
		such that $\overline{T_i} \cap \overline{T_{i+1}}$ is a joint facet of $T_i$ and $T_{i+1}$ (for $d=3$), 
		resp., a joint node (for $d=2$).
		
		\item There exists a constant $\Cshape>0$ which depends only on $\gamma$, such that
		\begin{align*}
			\max_{T \in \TT_\coarse} \frac{ \diam(T)}{h_\coarse(T)} \leq \Cshape \quad \text{with} \quad \diam(T) := \sup_{x,y \in T} |x-y|.  
		\end{align*}
	\end{enumerate}
\end{lemma}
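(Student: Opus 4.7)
The plan is to deduce all four assertions from one elementary consequence of the bound $\sigma(T)\le\gamma$: the element map $g_T$ is bi-Lipschitz with constants controlled by $h_\coarse(T)$ and $\gamma$. More precisely, uniformly on $\overline{\Tref}$ one has $\|Dg_T\|^{2}\le\lambda_{\max}(G_T)\le\gamma\,h_\coarse(T)^{2}$ and $\|(Dg_T)^{-1}\|^{2}\le 1/\lambda_{\min}(G_T)\le\gamma/h_\coarse(T)^{2}$. Integrating $\sqrt{\det G_T}$ over $\Tref$ then yields $|T|\simeq h_\coarse(T)^{d-1}$ with constants depending only on $\gamma$ and on the fixed shape of $\Tref$.

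Assertion (d) follows at once: for $x,y\in T$ with preimages $\hat x,\hat y\in\overline{\Tref}$, one estimates $|x-y|\le\gamma^{1/2}\diam(\Tref)\,h_\coarse(T)$, so that $\Cshape=\gamma^{1/2}\diam(\Tref)$ works.

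For (a), the case $d=2$ is direct from $\widetilde\sigma(\TT_\coarse)\le\gamma$ combined with $h_\coarse(T)=|T|$. In the case $d=3$, two neighbouring elements share either a node or a facet. If $\overline T\cap\overline{T'}$ is a facet, I would use the affine compatibility condition~(d) in the definition of a regular triangulation, together with the Gramian bounds for $g_T$ and $g_{T'}$, to obtain two-sided control of the facet area and hence of $h_\coarse(T)/h_\coarse(T')$. If the intersection reduces to a single node, one has to walk through the finitely many elements meeting at that node via facet-adjacency and concatenate the facet-based comparisons. This is the main obstacle: shape regularity is purely elementwise and does not, by itself, force two vertex-neighbours to have comparable mesh sizes; one must exploit the facet-connectivity around the common node. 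The cleanest route is to prove the facet-shared case of~(a) first, use it to establish the node-surrounding chain, and only then close the loop for~(a) in the node-shared case.

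Items (b) and (c) follow easily. For (b), every $T'\subseteq\omega_\coarse(T)$ lies in a ball of radius $\lesssim h_\coarse(T)$ centred at any point of $T$ (by (d) and (a)), while $|T'|\gtrsim h_\coarse(T)^{d-1}$ (by the first paragraph and~(a)); disjointness of the open elements then bounds their number by a constant depending only on $\gamma$ and the Lipschitz character of $\po$. Assertion (c) is a finite topological statement: the elements of $\omega_\coarse(T)$ form a graph under joint-facet (for $d=3$) or joint-node (for $d=2$) adjacency, and this graph is connected by construction of the triangulation, so a chain of length at most $C$ exists by~(b).
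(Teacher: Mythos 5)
The paper does not actually prove this lemma: it is stated with a pointer to \cite[Lemma~2.6]{invest}, and no argument is given in the text. So there is no internal proof to compare against, and I can only assess your sketch on its own terms. Your overall plan is the standard one and is sound where it is detailed: the two-sided Gramian bound makes each $g_T$ bi-Lipschitz with distortion controlled by $\gamma$, from which (d) and the relation $|T|\simeq h_\coarse(T)^{d-1}$ follow exactly as you say, the facet-shared case of (a) for $d=3$ follows from the affine facet-compatibility axiom, and the $d=2$ case of (a) is built directly into the definition through $\widetilde\sigma\le\gamma$.

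The step you yourself flag as ``the main obstacle'' is where the sketch has a genuine gap rather than just compression. To pass from facet-adjacent to node-adjacent comparability in (a) for $d=3$, and likewise to prove (b) and (c), you need two facts that are not consequences of the elementwise bound $\sigma(T)\le\gamma$ alone: that the elements meeting at a node form a chain under facet-adjacency, and that this chain has a \emph{uniformly} bounded number of links. You assert the walk exists ``by construction of the triangulation'', but the four axioms defining a regular triangulation say nothing about the link of a node being connected; this comes from $\Gamma$ being a Lipschitz dissection of a Lipschitz boundary, i.e.\ from the surface being locally a bi-Lipschitz graph. The cardinality bound then requires an angle-sum argument: shape regularity bounds from below the solid angle (equivalently, the surface angle) each element subtends at a shared vertex, while the Lipschitz character of $\po$ bounds from above the total angle the surface accumulates around any point; dividing gives a uniform bound on the number of elements in the fan. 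This ``bounded, facet-connected fan around each node'' is the real content of the lemma, and it is exactly where both the $\gamma$-dependence and the Lipschitz-character-dependence of $C$ enter. Once this is proved, your chain argument closes (a), and your disjointness/volume argument for (b) and the concatenation of per-node fans through $T$ for (c) go through.
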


\subsection{Discrete spaces}
\label{section:Galerkindiscretization}
Let $\TT_\coarse$ be a regular triangulation of $\Gamma$. For a fixed polynomial
 degree $p \geq 0$, we define the space of (discontinuous) $\TT_\coarse$-piecewise polynomials by
\begin{align*}
 	\PP^p(\TT_\coarse) := \big\{\Phi_\coarse \in L^\infty(\Gamma): \,\, \forall T \in \TT_\coarse, \quad \Phi_\coarse \circ \g_T \,\, \text{is a polynomial of degree} \,\leq p  \big\}.
 \end{align*}
 Further, let  $\SS^p(\TT_\coarse) := \PP^p(\TT_\coarse) \cap H^1(\Gamma)$ resp. $\widetilde{\SS}^p(\TT_\coarse) := \PP^p(\TT_\coarse) \cap \H^1(\Gamma)$
 be the space of continuous piecewise polynomials.
Note 
the following (compact) inclusions
 \begin{align}
 	\PP^p(\TT_\coarse) \subset L^2(\Gamma) \subset \H^{-1/2}(\Gamma) \quad \text{and} \quad \widetilde{\SS}^p(\TT_\coarse) \subset \H^1(\Gamma)  \subset \H^{1/2}(\Gamma).
 \end{align}
In the case of $\Gamma = \po$, there holds $\widetilde{\SS}^p(\TT_\coarse) = \SS^p(\TT_\coarse)$ and $\SS^p(\TT_\coarse) \subset H^1(\Gamma)$.
Throughout this paper, we use the following convention: All quantities which are associated with a triangulation $\TT_\coarse$, have the 
same index, e.g., $h_\coarse \in \PP^0(\TT_\coarse)$ is the local mesh size function or $\Phi_\coarse $ will denote the discrete solution in $\PP^p(\TT_\coarse)$.


\section{Inverse estimate}
\label{section:inverse_estimate}
The main result of this section is the following inverse-type estimate which 
generalizes~\cite[Theorem 3.1]{fkmp} and~\cite[Theorem 3.1]{invest}
from $k=0$ to general $k \geq 0$.
\begin{theorem}\label{theorem:invest}
	The single-layer and the double-layer operator satisfy
	\begin{align}\label{eq:invest:wellposedness}
		\slok \in L \big( L^2(\Gamma),H^1(\Gamma) \big) \quad \text{resp.} \quad \dlok \in L \big( \H^1(\Gamma), H^1(\Gamma) \big).
	\end{align} 
	Additionally, let $\TT_\coarse$ be a $\gamma$-shape regular triangulation of $\Gamma$. 
	Then, there exists a constant $\Cinv>0$ which depends only on $\Gamma$, $\Omega$, and  
	$\gamma$, such that for all $k \geq 0$, it holds that 
	\begin{align}
		\Cinv^{-1} \, \norm{h^{1/2}_{\coarse} \, \nabla_\Gamma \, \slok \psi}{L^2(\Gamma)} 
		&\leq   (1+k^{3}) \norm{\psi}{\H^{-1/2}(\Gamma)} + \norm{h^{1/2}_{\coarse} \, \psi}{L^2(\Gamma)} ,
		 \label{eq:invest:slo}\\		
		\Cinv^{-1} \, \norm{h^{1/2}_{\coarse} \, \nabla_\Gamma \, \dlok v}{L^2(\Gamma)}
		&\leq   (1+k^3) \norm{v}{\H^{1/2}(\Gamma)} + \norm{h^{1/2}_{\coarse} \, \nabla_{\Gamma} v}{L^2(\Gamma)} ,
		\label{eq:invest:dlo} \\
		 \Cinv^{-1} \, \norm{h^{1/2}_{\coarse} \adlok \psi}{L^2(\Gamma)}
		&\leq   (1+k^{3}) \norm{\psi}{\H^{-1/2}(\Gamma)} + \norm{h^{1/2}_{\coarse} \, \psi}{L^2(\Gamma)}  ,
		 \label{eq:invest:adlo}	 \\
		 \Cinv^{-1} \, \norm{h^{1/2}_{\coarse} \, \hsok v}{L^2(\Gamma)} 
		&\leq   (1+k^3) \norm{v}{\H^{1/2}(\Gamma)} + \norm{h^{1/2}_{\coarse} \, \nabla_{\Gamma} v}{L^2(\Gamma)}  ,
		\label{eq:invest:hso} 
	\end{align}
	for all $\psi \in L^2(\Gamma)$ and $ v \in \H^1(\Gamma)$.
	Furthermore, there exists $\Cinvtilde>0$ which depends only on $\Omega$, $\Gamma$, $\gamma$, and $p$, such that
	\begin{align}
		\norm{h_\coarse^{1/2} \, \nabla_\Gamma \, \slok \Psi_\coarse}{L^2(\Gamma)} + \norm{h_\coarse^{1/2} \, \adlok \Psi_\coarse}{L^2(\Gamma)} 
		&\leq \Cinvtilde (1+k^{3}) \norm{\Psi_\coarse}{\H^{-1/2}(\Gamma)}, 
		 \label{eq:discrete:invest:slo} \\				
		\norm{h_\coarse^{1/2} \, \nabla_\Gamma \, \dlok V_\coarse}{L^2(\Gamma)} + \norm{h_\coarse^{1/2} \, \hsok V_\coarse}{L^2(\Gamma)} 
		&\leq \Cinvtilde (1+k^{3})  \norm{V_\coarse}{\H^{1/2}(\Gamma)} ,
		\label{eq:discrete:invest:dlo}
	\end{align} 
	for all $ \Psi_\coarse \in \PP^p(\TT_\coarse)$ and $ V_\coarse \in \widetilde{\SS}^{p+1}(\TT_\coarse)$.
	In particular, the constants $\Cinv,\Cinvtilde$ are independent of the wavenumber $k \geq 0$.
\end{theorem}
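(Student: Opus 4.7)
The strategy is to reduce everything to the known Laplace case ($k=0$) by writing
\begin{align*}
\slok = \slonull + (\slok - \slonull),
\qquad
\dlok = \dlonull + (\dlok - \dlonull),
\end{align*}
and analogously for $\adlok$ and $\hsok$. The difference kernels $G_k - G_0$ are strictly smoother than $G_0$: in $d=3$ they are analytic with $k$-polynomial derivatives, and in $d=2$ the leading log-singularity cancels. The frequency-explicit potential decompositions from~\cite{Mel12} supply mapping properties of these differences into $H^1(\Gamma)$ with constants that are polynomial in $k$ of degree at most three; this will furnish the $(1+k^3)$ factor in all four inequalities.

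For the inverse estimates~\eqref{eq:invest:slo}--\eqref{eq:invest:hso}, I would split each left-hand side by the triangle inequality, for example
\begin{align*}
\norm{h_\coarse^{1/2}\nabla_\Gamma \slok \psi}{L^2(\Gamma)}
\le \norm{h_\coarse^{1/2}\nabla_\Gamma \slonull \psi}{L^2(\Gamma)}
+ \norm{h_\coarse^{1/2}\nabla_\Gamma (\slok-\slonull)\psi}{L^2(\Gamma)}.
\end{align*}
The Laplace term is controlled by the known inverse estimate~\cite[Theorem~3.1]{invest}, which already produces the desired right-hand side $\norm{\psi}{\H^{-1/2}(\Gamma)} + \norm{h_\coarse^{1/2}\psi}{L^2(\Gamma)}$. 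For the difference, I pull out $\norm{h_\coarse^{1/2}}{L^\infty(\Gamma)} \le C$ and invoke the smoothing bound $\norm{(\slok-\slonull)\psi}{H^1(\Gamma)} \le C(1+k^3)\,\norm{\psi}{\H^{-1/2}(\Gamma)}$ provided by the Melenk decomposition. The same split works verbatim for~\eqref{eq:invest:dlo}, \eqref{eq:invest:adlo}, \eqref{eq:invest:hso}, using the corresponding Laplace analogues from~\cite{invest} together with the smoothing of $\dlok-\dlonull$, $\adlok-\adlonull$, and $\hsok-\hsonull$ into $H^1(\Gamma)$. The continuity statement~\eqref{eq:invest:wellposedness} is then a by-product, since the right-hand sides are finite for $\psi\in L^2(\Gamma)$ and $v\in \H^1(\Gamma)$, respectively.

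The discrete estimates~\eqref{eq:discrete:invest:slo}--\eqref{eq:discrete:invest:dlo} follow immediately: apply~\eqref{eq:invest:slo}--\eqref{eq:invest:hso} to $\Psi_\coarse\in\PP^p(\TT_\coarse)$ or $V_\coarse\in\widetilde{\SS}^{p+1}(\TT_\coarse)$ and absorb the remaining $L^2$ terms via the standard polynomial inverse inequalities
\begin{align*}
\norm{h_\coarse^{1/2}\Psi_\coarse}{L^2(\Gamma)} \le C(p,\gamma)\,\norm{\Psi_\coarse}{\H^{-1/2}(\Gamma)},
\quad
\norm{h_\coarse^{1/2}\nabla_\Gamma V_\coarse}{L^2(\Gamma)} \le C(p,\gamma)\,\norm{V_\coarse}{\H^{1/2}(\Gamma)},
\end{align*}
which hold under $\gamma$-shape regularity. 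The main obstacle is the $k$-bookkeeping: verifying that the Melenk decomposition of $G_k-G_0$ produces precisely cubic growth in $k$ when mapping $\H^{-1/2}(\Gamma) \to H^1(\Gamma)$ (and analogously for the other three operators), uniformly in the local mesh-size function $h_\coarse$. This requires combining the $k$-explicit Sobolev bounds of the analytic remainder with interior/exterior trace and conormal estimates; the exponent~$3$ enters through differentiating the remainder once more to reach the $H^1(\Gamma)$ level and through the trace of the conormal derivative. This is the only non-routine ingredient; the mesh-dependent parts reuse the Laplace machinery of~\cite{invest} almost verbatim.
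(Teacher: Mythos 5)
Your proposal is correct and matches the paper's own argument: the paper also writes each Helmholtz operator as the corresponding Laplace operator plus a smoothing remainder, uses [invest, Theorem~3.1] for the Laplace part, controls the remainder via the frequency-explicit decomposition of~\cite{Mel12} (which gives the $1+k^3$ factor through the $H^2(B_R)$-level bound on the analytic part of the potential and the trace/conormal trace maps), and finally absorbs the discrete $L^2$-terms via the standard polynomial inverse inequalities. The only cosmetic difference is that the paper treats the remainder explicitly as the two Melenk pieces $\Sv + \Av$ (resp.\ $\Sk + \Ak$) rather than as the single difference $\slok-\slonull$, but the estimates derived are identical.
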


The proof of Theorem~\ref{theorem:invest} is based on the decomposition of the layer potentials into a singular part, which consists of
the layer potentials $\slpnull$, resp., $\dlpnull$, of the Laplacian and two smoothing operators $\widetilde{S}$ and $\widetilde{A}$. 
For the decomposition, we employ the following notation
\begin{align*} 
	|\nabla^n \psi(x)|^2 := \sum_{\alpha \in \N_0^d \atop |\alpha| = n}	\frac{n!}{\alpha!} |D^\alpha \psi(x)|^2 \,\, \text{with} \,\, \alpha! := \alpha_1! \cdot \alpha_2! \ldots \cdot \alpha_d!
	\quad \text{and} \quad 	|\nabla^0 \psi(x)|^2 := |\psi(x)|^2.
\end{align*}
Lemma~\ref{lemma:decomposition:slp} provides such a decomposition for the single-layer potential,
while Lemma~\ref{lemma:decomposition:dlp} states a similar result for the double-layer potential.	
\begin{lemma}[{\cite[Theorem 5.1.1]{Mel12}}]\label{lemma:decomposition:slp}
	Let $R>0$ with $\overline{\Omega} \subsetneqq B_R := \{x \in \R^d: |x| < R \}$. Let $0 < \rho < 1$.
	Then, it holds that
	\begin{align}\label{eq:decomp:slp}
		\slpk = \slpnull + \Sv + \Av,
	\end{align}
	with linear potential operators $\Sv: H^{-1/2+s}(\po) \to H^{3+s} (B_R)$ and
	$\Av:  H^{-1/2+s}(\po) \to H^{3+s} (B_R) \cap C^\infty(B_R)$ for all $-1/2 < s < 1/2$. 
	Moreover, there exist positive constants $C_1^V,C_2^V,C_3^V>0$ such that
	\begin{align}
		\norm{\Sv \psi}{H^{s'}(B_R)} &\leq C_1^V \rho^2(\rho k^{-1})^{1+s-s'} \norm{\psi}{H^{-1/2+s}(\po)} \quad \text{for all } 0 \leq s' \leq 3+s, \label{eq:decomp:slp:S} \\ 
		\norm{\nabla^n \Av \psi}{L^2(B_R)} &\leq C_2^V k^{n+1} \norm{\slpnull \psi}{L^2(B_R)} \leq C_3^V k^{n+1} \norm{\psi}{H^{-1}(\po)} \label{eq:decomp:slp:A}
		\quad \text{for all } n \in \N_0.	
	\end{align} 
	The constants $C_1^V,C_2^V$, and $C_3^V$ depend only on $R$, $\Omega$, but not on the wavenumber $k$.\qed
\end{lemma}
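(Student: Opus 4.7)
The plan is to build the decomposition through a frequency-space splitting of the correction $\slpk - \slpnull$. Viewing the kernels as tempered distributions on $\R^d$, one has $\widehat{G}_k(\xi) = (|\xi|^2 - k^2 - i0)^{-1}$ (outgoing prescription) and $\widehat{G}_0(\xi) = |\xi|^{-2}$, so that
\begin{align*}
	\widehat{G}_k(\xi) - \widehat{G}_0(\xi) = \frac{k^2}{|\xi|^2(|\xi|^2 - k^2 - i0)}
\end{align*}
is smoothing of order $4$ with a $k^2$ prefactor away from the characteristic sphere $|\xi| = k$. The parameter $\rho$ will tune a cutoff scale $\eta := \rho/k$; since $\rho<1$, the cutoff threshold $1/\eta = k/\rho$ lies safely above the singular sphere. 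Lifting $\psi$ to the surface distribution $\mu := \psi\otimes \delta_\po$ on $\R^d$ through the co-normal trace duality gives $\slpk\psi = G_k * \mu$ and $\mu \in H^{s-1}(\R^d)$ when $\psi \in H^{-1/2+s}(\po)$.

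I fix $\chi \in C_c^\infty(\R^d)$ with $\chi\equiv 1$ on $|\xi|\leq 1$, supported in $|\xi|\leq 2$, and set $\chi_\eta(\xi) := \chi(\eta\xi)$. I then define
\begin{align*}
	\Av\psi &:= \mathcal{F}^{-1}\bigl[\chi_\eta(\xi)\,(\widehat{G}_k(\xi) - \widehat{G}_0(\xi))\,\widehat{\mu}(\xi)\bigr], \\
	\Sv\psi &:= \mathcal{F}^{-1}\bigl[(1-\chi_\eta(\xi))\,(\widehat{G}_k(\xi) - \widehat{G}_0(\xi))\,\widehat{\mu}(\xi)\bigr],
\end{align*}
so that $\slpk\psi - \slpnull\psi = \Av\psi + \Sv\psi$ on $\R^d$, which I restrict to $B_R$. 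By construction, $\widehat{\Av\psi}$ has compact support contained in $\{|\xi|\leq 2/\eta\}$, hence $\Av\psi$ is the restriction of an entire function of exponential type, and in particular lies in $C^\infty(B_R)$.

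The two norm estimates are then Fourier multiplier computations. For $\Av$, Bernstein's inequality on band-limited functions yields $\|\nabla^n \Av\psi\|_{L^2(B_R)} \lesssim (1/\eta)^n \|\Av\psi\|_{L^2(B_R)}$, and the identity $\chi_\eta(\widehat{G}_k - \widehat{G}_0) = k^2\,\chi_\eta(|\xi|^2 - k^2 - i0)^{-1}\cdot\widehat{G}_0$ together with a uniform multiplier bound on the compact annulus $\{|\xi|\leq 2/\eta\}$ (checked via the factored form of $|\xi|^2 - k^2$ after rescaling $\xi \mapsto k\xi$) gives $\|\Av\psi\|_{L^2(B_R)} \lesssim k\,\|\slpnull\psi\|_{L^2(B_R)}$, hence the chain in~\eqref{eq:decomp:slp:A}. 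For $\Sv$, the high-pass symbol $(1-\chi_\eta)(\widehat{G}_k - \widehat{G}_0)$ behaves like $k^2/|\xi|^4$ on $\{|\xi|\geq 1/\eta\}$ and vanishes below; acting as a Fourier multiplier, it maps $H^{s-1}(\R^d) \to H^{s'}(\R^d)$ with norm $\lesssim k^2\,\eta^{4-(s'-s+1)} = \rho^2\,(\rho/k)^{1+s-s'}$ once one tracks how the $k^2$ prefactor combines with the $1/\eta = k/\rho$ threshold and the gain $4-(s'-s+1)$ in the polynomial decay; this yields~\eqref{eq:decomp:slp:S}.

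The main obstacle is making the convolution-on-$\R^d$ analysis genuinely compatible with the surface distribution $\mu = \psi\otimes\delta_\po$ over a Lipschitz (rather than smooth) boundary: one has to justify the lifting $\psi\mapsto\mu$ as a bounded map $H^{-1/2+s}(\po)\to H^{s-1}(\R^d)$ for $|s|<1/2$ via the duality between trace and co-normal extension, and one must ensure that every multiplier estimate remains $k$-independent after the rescaling $\xi = k\xi'$. A secondary delicacy is the rigorous interpretation of $(|\xi|^2 - k^2 - i0)^{-1}$ near the singular sphere; this is the standard outgoing fundamental solution, and crucially both cutoffs $\chi_\eta$ and $1-\chi_\eta$ keep the analysis away from $|\xi|=k$ because $1/\eta > k$. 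Once these points are in place, the exponent bookkeeping in $s,s',\rho,k$ is routine.
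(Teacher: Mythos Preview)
The paper does not prove this lemma---it is quoted from \cite[Theorem~5.1.1]{Mel12} and marked with a \qed. Your frequency-splitting architecture is in the right spirit, and the treatment of $\Sv$ is essentially correct: on $\{|\xi|\ge 1/\eta=k/\rho\}$ one has $|\xi|^2-k^2\ge(1-\rho^2)\,|\xi|^2$, so the symbol $k^2|\xi|^{-2}(|\xi|^2-k^2)^{-1}$ is bounded by $C(\rho)\,k^2|\xi|^{-4}$ and the multiplier calculus yields the $H^{s-1}(\R^d)\to H^{s'}(\R^d)$ bound with the stated powers of $\rho$ and $k$. The gap is in the $\Av$ estimate. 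Your closing assertion that ``both cutoffs $\chi_\eta$ and $1-\chi_\eta$ keep the analysis away from $|\xi|=k$ because $1/\eta>k$'' is false for the low-pass part: $\chi_\eta\equiv 1$ on $\{|\xi|\le k/\rho\}$, and since $\rho<1$ this ball \emph{contains} the sphere $|\xi|=k$. Consequently the multiplier you apply to $\widehat{\slpnull\psi}$, namely $m(\xi)=k^2\chi_\eta(\xi)(|\xi|^2-k^2-i0)^{-1}$, is not in $L^\infty$; after your rescaling $\xi\mapsto k\xi'$ it becomes $\chi(\rho\xi')(|\xi'|^2-1-i0)^{-1}$, whose principal-value part blows up at $|\xi'|=1$ and whose imaginary part is a surface $\delta$-measure there (Sokhotski--Plemelj). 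There is no ``uniform multiplier bound on the compact annulus'', and Plancherel cannot deliver $\|\Av\psi\|_{L^2}\lesssim k\,\|\slpnull\psi\|_{L^2}$; indeed, with your definition $\widehat{\Av\psi}$ is a compactly supported distribution carrying a measure on $|\xi|=k$, so $\Av\psi\notin L^2(\R^d)$ in general and only the local norm on $B_R$ makes sense.

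What is missing is a $k$-explicit mechanism for the near-resonant frequencies $|\xi|\approx k$, and this is exactly the nontrivial step of the decomposition. One route is to exploit that $w:=\slpk\psi-\slpnull\psi$ satisfies $(-\Delta-k^2)w=k^2\,\slpnull\psi$ on $\R^d$ and to combine the high-pass construction of $\Sv$ with a $k$-explicit Helmholtz solution bound on the bounded ball $B_R$ (or, equivalently, an iterated Newton-potential argument interacting with the frequency cutoff) so that the remaining analytic piece is controlled by $\|\slpnull\psi\|_{L^2(B_R)}$. Your sketch has the right skeleton but skips precisely this step.
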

The decomposition of the double-layer potential additionally involves certain Besov spaces, 
which are defined by the K-method of interpolation; see e.g.~\cite{tartar,triebel83,triebel92}.
For bounded Lipschitz domains $\widetilde{\Omega} \subset \R^d$ and $s \in \N_0$ as well as $s' \in (0,1)$, we require the Besov space
$B^{s+s'}_{2,\infty}(\widetilde{\Omega}) := \big( H^s(\widetilde{\Omega}),H^{s+1}(\widetilde{\Omega})  \big)_{s',\infty}$.

\begin{lemma}[{\cite[Theorem 5.2]{Mel12}}]\label{lemma:decomposition:dlp}
	Let $R>0$ with $\overline{\Omega} \subsetneqq B_R := \{x \in \R^d: |x| < R \}$. 
	It holds that
	\begin{align}\label{eq:decomp:dlp}
		\dlpk = \dlpnull + \Sk + \Ak,
	\end{align}
	with linear potential operators $\Sk: L^2(\po) \to B^{5/2}_{2,\infty} (B_R)$ as well as
	$\Ak: L^2(\po) \to B^{5/2}_{2,\infty} (B_R) \cap C^\infty(B_R)$. 
	Moreover, there exist constants $C_1^K,C_2^K,C_3^K>0$, such that
	\begin{align}
		\norm{\Sk v}{B^{5/2}_{2,\infty}(B_R)} &\leq C_1^K k \norm{v}{L^2(\po)}, \label{eq:decomp:dlp:S} \\ 
		\norm{\nabla^n \Ak v}{L^2(B_R)} &\leq C_2^K k^{n+1} \norm{\dlpnull v}{L^2(B_R)} \leq C_3^K k^{n+1} \norm{v}{L^2(\po)} \quad \text{for all } n \in \N_0.	\label{eq:decomp:dlp:A}
	\end{align} 
	The constants $C_1^K,C_2^K$, and $C_3^K$ depend only on $R$, $\Omega$, but not on the wavenumber $k$.\qed
\end{lemma}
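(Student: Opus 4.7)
The plan is to reduce each Helmholtz boundary operator to its Laplace counterpart plus a smoother remainder using the potential decompositions of Lemmas~\ref{lemma:decomposition:slp}--\ref{lemma:decomposition:dlp}, apply the known $k=0$ inverse estimates from~\cite[Theorem~3.1]{invest} to the principal part, and control the remainder by trace and Sobolev embedding while tracking the wavenumber. The mapping property~\eqref{eq:invest:wellposedness} for $\slok$ would follow by writing $\slok = \slonull + \traceint \Sv + \traceint \Av$: continuity $\slonull \colon L^2(\Gamma) \to H^1(\Gamma)$ is classical, while Lemma~\ref{lemma:decomposition:slp} applied with $s = 0$ and $s' = 2$ gives $\Sv, \Av \colon L^2(\po) \to H^2(B_R)$, whose interior trace is continuous into $H^{3/2}(\po) \hookrightarrow H^1(\Gamma)$. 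The mapping property for $\dlok$ would be handled analogously via Lemma~\ref{lemma:decomposition:dlp}, using the embedding $B^{5/2}_{2,\infty}(B_R) \hookrightarrow H^2(B_R)$.

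For the continuous inverse estimates~\eqref{eq:invest:slo}--\eqref{eq:invest:hso}, I would split $\slok \psi = \slonull \psi + R_V \psi$ with $R_V := \traceint(\Sv + \Av)$; the $k=0$ term is bounded by the claimed right-hand side via~\cite[Theorem~3.1]{invest} with constants independent of $k$. For the remainder, I fix $\rho \in (0,1)$ and extract from Lemma~\ref{lemma:decomposition:slp} the bounds $\norm{\Sv \psi}{H^2(B_R)} \lesssim k \, \norm{\psi}{H^{-1/2}(\po)}$ and $\norm{\Av \psi}{H^2(B_R)} \lesssim k^3 \, \norm{\psi}{H^{-1}(\po)}$, so that the trace theorem yields $\norm{R_V \psi}{H^1(\Gamma)} \lesssim (1+k^3) \norm{\psi}{\H^{-1/2}(\Gamma)}$; the elementary estimate $\norm{h_\coarse^{1/2} \nabla_\Gamma u}{L^2(\Gamma)} \le \diam(\Gamma)^{1/2} \norm{u}{H^1(\Gamma)}$ then closes~\eqref{eq:invest:slo}. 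The double-layer estimate~\eqref{eq:invest:dlo} proceeds in parallel via Lemma~\ref{lemma:decomposition:dlp}. For~\eqref{eq:invest:adlo} and~\eqref{eq:invest:hso}, I use the identities $\adlok = -\tfrac{1}{2} \textrm{Id} + \condint \slpk$ and $\hsok = -\condint \dlpk$ together with the same decompositions, exploiting the continuity $\condint \colon H^2(U) \to H^{1/2}(\po) \hookrightarrow L^2(\po)$ to transfer the interior $H^2$-bounds on $\Sv, \Av, \Sk, \Ak$ into $L^2(\po)$-bounds on their conormal traces, again with at most a $(1+k^3)$ prefactor.

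For the discrete estimates~\eqref{eq:discrete:invest:slo}--\eqref{eq:discrete:invest:dlo}, I would insert $\psi = \Psi_\coarse \in \PP^p(\TT_\coarse)$ and $v = V_\coarse \in \widetilde{\SS}^{p+1}(\TT_\coarse)$ into the continuous estimates and eliminate the weighted $L^2$-terms via the standard $p$-dependent polynomial inverse estimates $\norm{h_\coarse^{1/2} \Psi_\coarse}{L^2(\Gamma)} \lesssim \norm{\Psi_\coarse}{\H^{-1/2}(\Gamma)}$ and $\norm{h_\coarse^{1/2} \nabla_\Gamma V_\coarse}{L^2(\Gamma)} \lesssim \norm{V_\coarse}{\H^{1/2}(\Gamma)}$ available on $\gamma$-shape regular meshes, as in the $k=0$ argument of~\cite{invest}. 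The main obstacle will be the careful bookkeeping of $k$-powers in the remainder estimates so that the exponent in $(1+k^3)$ is not inadvertently inflated: in particular, fixing the free parameter $\rho$ of Lemma~\ref{lemma:decomposition:slp} once-and-for-all so as to extract $H^2$-regularity of $\Sv$ with the minimal power of $k$, handling the extension-by-zero convention on $\Gamma \subsetneq \po$ in a way compatible with the $\H^{\pm 1/2}(\Gamma)$ duality so that the bounds transfer cleanly from $\po$ to $\Gamma$, and verifying that the conormal-derivative reduction for $\adlok$ and $\hsok$ does not cost an additional power of $k$ beyond what is already absorbed into $(1+k^3)$.
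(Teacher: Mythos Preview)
Your proposal does not address the stated lemma at all. The statement you were asked to prove is Lemma~\ref{lemma:decomposition:dlp}, the decomposition $\dlpk = \dlpnull + \Sk + \Ak$ of the double-layer potential together with the bounds~\eqref{eq:decomp:dlp:S}--\eqref{eq:decomp:dlp:A}. In the paper this lemma is quoted verbatim from~\cite[Theorem~5.2]{Mel12} and carries no proof; it is used as a black box. What you have written is instead a (correct, and essentially identical to the paper's) outline of the proof of Theorem~\ref{theorem:invest}, which \emph{invokes} Lemma~\ref{lemma:decomposition:dlp} as one of its ingredients. You have not said a word about how the operators $\Sk$ and $\Ak$ are constructed or why the Besov and analytic-type bounds hold.

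If the intent was to prove Lemma~\ref{lemma:decomposition:dlp} itself, the argument lives entirely in~\cite{Mel12}: one expands the Helmholtz kernel $G_k - G_0$, isolates a low-order correction that gains one derivative over $\dlpnull$ (this is $\Sk$, landing in $B^{5/2}_{2,\infty}$ because the Newton potential maps $L^2$ to $H^2$ and the kernel difference carries an extra factor of $k|x-y|$), and collects the remaining analytic remainder into $\Ak$ via an iterated Newton-potential argument that produces the $k^{n+1}$ growth in~\eqref{eq:decomp:dlp:A}. None of this appears in your write-up. If instead you meant to prove Theorem~\ref{theorem:invest}, then your outline matches the paper's proof step for step, but you have attached it to the wrong statement.
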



\begin{proof}[Proof of Theorem~\ref{theorem:invest}]
Let $k>0$ and $R>0$ with $B_R \supsetneqq \overline{\Omega}$. 
For convenience of the reader, we split the proof into several steps.

\medskip
{\bf Step~1 (Proof of~(\ref{eq:invest:wellposedness}) for $\boldsymbol{\slok}$): }\quad
Let $\psi \in L^2(\Gamma)$ and recall that $\norm{ \psi}{\H^{-1/2}(\Gamma)} = \norm{\tildpsi }{H^{-1/2}(\po)}$, where we identify $\psi$ with its extension $\exto \, \psi$. 
With Lemma~\ref{lemma:decomposition:slp} and the definition of $\slok := \traceint \, \slpk$, we decompose $\slok = \slonull + \Sopv + \Aopv$, where 
\begin{align*}
	\Sopv := \traceint \, \Sv \quad \text{and} \quad \Aopv := \traceint \, \Av.
\end{align*}
For all  $1/2 < s' \leq 3+s \le 3+1/2$, equation~\eqref{eq:decomp:slp:S} implies that 
\begin{align}\label{eq:proof:inves:temp:one}
\norm{\Sv \, \tildpsi}{H^{s'}(B_R)} \stackrel{\eqref{eq:decomp:slp:S}}{\lesssim} \rho^2 \, (\rho \, k^{-1})^{1+s-s'} \norm{ \tildpsi}{H^{-1/2+s}(\po)}.
\end{align} 
For $s'=2$ and $s=0$, this reveals $\Sv \, \tildpsi \in H^2(B_R)$.
Further, stability of $\traceint$ yields that
 \begin{align}\label{eq:mapping:sv}
 	\norm{\Sopv \tildpsi}{H^{1}(\Gamma)} &\leq \norm{\Sopv \tildpsi}{H^{1}(\po)} 
 	\lesssim \norm{\Sv \tildpsi}{H^{3/2}(B_R)}
 	\lesssim\norm{\Sv \tildpsi}{H^{2}(B_R)} 
   \stackrel{\eqref{eq:proof:inves:temp:one}}{\lesssim} \rho  k \, \norm{\psi}{\H^{-1/2}(\Gamma)}.
\hspace*{-4mm}
 \end{align}
  Next, note that equation~\eqref{eq:decomp:slp:A} proves that $\Aopv \tildpsi \in H^2(B_R)$. 
  With the (compact) embedding $H^{-1/2}(\po) \subset H^{-1}(\po)$ with
 $\norm{\cdot}{H^{-1}(\po)} \lesssim \norm{\cdot}{H^{-1/2}(\po)}$, this yields that
  \begin{align}\label{eq:mapping:av}
   \norm{\Av \, \tildpsi }{H^2(B_R)} 
  \stackrel{\eqref{eq:decomp:slp:A}}{\lesssim}  (k + k^2 +k^3) \, \norm{\tildpsi }{H^{-1}(\po)} 
  \lesssim (1+k^3) \, \norm{\psi}{\H^{-1/2}(\Gamma)}.
  \end{align}
Similarly to~\eqref{eq:mapping:sv}, continuity of the trace operator proves that 
 \begin{align}\label{eq:mapping:av:two}
 \begin{split}
 	\norm{\Aopv \, \tildpsi }{H^1(\Gamma)}  &\leq  \norm{\Aopv \, \tildpsi}{H^1(\po)} 
	\lesssim 
	\norm{\Av \, \tildpsi}{H^2(B_R)} 
	\stackrel{\eqref{eq:mapping:av}}{\lesssim}  (1+k^3) \, \norm{\psi}{\H^{-1/2}(\Gamma)}.
\end{split}
\end{align} 
Combining the estimates~\eqref{eq:mapping:sv} and~\eqref{eq:mapping:av:two} with the (compact) embedding
$L^2(\Gamma) \subset \H^{-1/2}(\Gamma)$, we see that $\Aopv,\Sopv \in L \big(L^2(\Gamma), H^1(\Gamma) \big)$. With $\slonull \in L \big(L^2(\Gamma), H^1(\Gamma) \big)$, we conclude that $ \slok = \slonull + \Sopv + \Aopv \in L \big(L^2(\Gamma), H^1(\Gamma) \big)$.

\medskip

{\bf Step~2 (Proof of equation~(\ref{eq:invest:slo})):} \quad 
Recall that $\slok = \slonull + \Sopv + \Aopv$. This decomposition directly yields that
\begin{align} \label{eq:proof:invest:slo}
\begin{split}
	\norm{h_\coarse^{1/2} \nabla_\Gamma  \slok \tildpsi}{L^2(\Gamma)} &\leq \norm{h_\coarse^{1/2} \nabla_\Gamma  \slonull \, \tildpsi}{L^2(\Gamma)} 
	+ \norm{h_\coarse^{1/2}  \nabla_\Gamma  \Sopv \tildpsi}{L^2(\Gamma)} + \norm{ h_\coarse^{1/2}  \nabla_\Gamma  \Aopv \tildpsi}{L^2(\Gamma)}.
	\hspace*{-4mm}
\end{split}	
\end{align}
We treat each term on the right-hand side separately. \cite[Theorem~3.1]{invest} yields that
\begin{align*}
	\norm{h_\coarse^{1/2} \, \nabla_\Gamma \, \slonull \,  \tildpsi}{L^2(\Gamma)} \lesssim  \norm{\psi}{\H^{-1/2}(\Gamma)} 
		+ \norm{h_\coarse^{1/2} \, \psi}{L^2(\Gamma)}.
\end{align*}
Next, $\norm{h_\coarse}{L^\infty(\Gamma)} \lesssim  \diam(\Omega) \lesssim 1$ and equation~\eqref{eq:mapping:sv} imply that 
\begin{align*}
	 \norm{h_\coarse^{1/2} \, \nabla_\Gamma \, \Sopv \, \tildpsi}{L^2(\Gamma)} 
	 \lesssim 
	 \norm{\Sopv \, \tildpsi}{H^{1}(\Gamma)} %
	 \stackrel{\eqref{eq:mapping:sv} }{\lesssim} 
	 k \, \norm{\psi}{\H^{-1/2}(\Gamma)}.
\end{align*}
Finally, we use equation~\eqref{eq:mapping:av:two} to estimate the last term on the right hand side of~\eqref{eq:proof:invest:slo} by
\begin{align*} 
\norm{h_\coarse^{1/2} \, \nabla_\Gamma \, \Aopv \tildpsi}{L^2(\Gamma)} 
\lesssim 
\norm{\Aopv \tildpsi}{H^1(\Gamma)} 
\stackrel{\eqref{eq:mapping:av:two} }{\lesssim}  (1 + k^3) \, \norm{\psi}{\H^{-1/2}(\Gamma)}. 
\end{align*}
Combining the latter four estimates, we prove that
\begin{align*}
	\norm{h_\coarse^{1/2} \, \nabla_\Gamma \, \slok \psi}{L^2(\Gamma)} \lesssim  (1+k^3) \, \norm{\psi}{\H^{-1/2}(\Gamma)}
		+ \norm{h_\coarse^{1/2} \, \psi}{L^2(\Gamma)}.
\end{align*}
This concludes the proof of~\eqref{eq:invest:slo}.

\medskip
{\bf Step~3 (Proof of equation~(\ref{eq:invest:adlo})):} \quad 
Recall the definition of the adjoint double-layer operator. With
$\adlok = - \frac{1}{2}{\rm Id} + \condint \, \slpk = \adlonull + \condint \, \Sv + \condint \, \Av$, this implies that
\begin{align*}
\norm{h_\coarse^{1/2} \, \adlok \, \tildpsi}{L^2(\Gamma)} \leq  \norm{h_\coarse^{1/2} \, \adlonull \,  \tildpsi}{L^2(\Gamma)} 
 + \norm{h_\coarse^{1/2} \, \condint \, \Sv \,  \tildpsi}{L^2(\Gamma)} +  \norm{h_\coarse^{1/2} \, \condint \,  \Av \,  \tildpsi}{L^2(\Gamma)}.
\end{align*}
Again, we treat each term on the right-hand side separately.
First, \cite[Theorem~3.1]{invest} yields that
\begin{align*}
\norm{h_\coarse^{1/2} \, \adlonull \, \tildpsi}{L^2(\Gamma)} \lesssim \norm{\psi}{\H^{-1/2}(\Gamma)} 
		+ \norm{h_\coarse^{1/2} \, \psi}{L^2(\Gamma)}.
\end{align*}
Recall from Step~1 that
$\Sv \, \tildpsi, \,  \Av \, \tildpsi \in H^{2}(B_R)$. Therefore, \cite[Remark 2.7.5]{sauterschwab} implies that
$\condint \Sv \tildpsi, \condint \Av \tildpsi \in H^{1/2}(\po)$. 
With $\norm{h_\coarse}{L^\infty(\Gamma)} \lesssim \diam(\Omega) \lesssim 1$, the
(compact) embedding $H^{1/2}(\po) \subset L^2(\po)$ and stability (\cite[Remark 2.7.5]{sauterschwab}) of the conormal derivative yield that
\begin{align*} 
	\norm{h_\coarse^{1/2} \, \condint \, \Sv\,  \tildpsi}{L^2(\Gamma)} 
	\lesssim  \norm{\condint \, \Sv \, \psi}{H^{1/2}(\po)} 
	\lesssim \norm{\Sv \, \tildpsi}{H^2(B_R)} \stackrel{\eqref{eq:mapping:sv}}{\lesssim} 
	 k \, \norm{ \psi}{\H^{-1/2}(\Gamma)}.
\end{align*}
Third, we argue as before and prove that
\begin{align*}
	\norm{h_\coarse^{1/2} \, \condint \, \Av \tildpsi}{L^2(\Gamma)} 
	\lesssim \norm{\condint \, \Av \, \tildpsi}{H^{1/2}(\po)} 
	\lesssim \norm{\Av \, \tildpsi}{H^{2}(B_R)} \stackrel{\eqref{eq:mapping:av}}{\lesssim} 
	(1+k^3) \, \norm{\psi}{\H^{-1/2}(\Gamma)} .
\end{align*}
Combining the right-hand sides of all estimates, we obtain that
\begin{align*}
	\norm{h_\coarse^{1/2} \, \adlok \tildpsi}{L^2(\Gamma)} \lesssim (1+k^{3}) \norm{\psi}{\H^{-1/2}(\Gamma)} + \norm{h_\coarse^{1/2} \, \psi}{L^2(\Gamma)},
\end{align*}
and conclude the proof of~\eqref{eq:invest:adlo}.

\medskip

{\bf Step~4 (Proof of~(\ref{eq:invest:wellposedness}) for $ \boldsymbol{\dlok}$):} \quad Let $v \in \H^1(\Gamma)$. 
Analogously to Step~1, Lemma~\ref{lemma:decomposition:dlp} yields that $\dlok = \dlonull + \Sopk + \Aopk$, where
 $\Sopk := \traceint \, \Sk$ and $\Aopk := \traceint \, \Ak$. 
 
For $-\infty < \sigma < s < \infty$, $0<q<\infty$, and $0<r,t\leq \infty$,
 there holds the continuous embedding $B^s_{q,r}(B_R) \subset B^\sigma_{q,t}(B_R)$; see, e.g.,~\cite[Section 2.32]{triebel92}.
This implies that $B^{5/2}_{2,\infty} (B_R) \subset B^{2}_{2,2}(B_R) = H^2(B_R)$ with
 $\norm{\cdot}{H^2(B_R)} \lesssim \norm{\cdot}{B^{5/2}_{2,\infty}(B_R)}$.
Analogously to~\eqref{eq:mapping:sv}, continuity of the interior trace operator $\traceint$ and inequality~\eqref{eq:decomp:dlp:S} reveal that
 \begin{align}\label{eq:mapping:sk}
 \begin{split}
 \norm{\Sopk \tildv}{H^1(\Gamma)} 
 &\lesssim \norm{\Sk \tildv}{H^{2}(B_R)} 
 \lesssim \norm{\Sk \tildv}{B^{5/2}_{2,\infty}(B_r)}  \stackrel{\eqref{eq:decomp:dlp:S}}{\lesssim} k \, \norm{\tildv}{L^2(\po)} = k \, \norm{v}{L^2(\Gamma)} .
 \end{split}
 \end{align}
The operator $\Aopk$ is treated analogously to Step~1 and hence satisfies that
\begin{align} \label{eq:mapping:ak}
	\norm{\Aopk \, \tildv}{H^1(\Gamma)} \lesssim \norm{\Ak \, \tildv}{H^{3/2}(B_R)} \leq \norm{\Ak \, \tildv}{H^2(B_R)} \lesssim ( 1 + k^3) \, \norm{v}{L^2(\Gamma)}.
\end{align}
Then, the estimates~\eqref{eq:mapping:sk} and~\eqref{eq:mapping:ak} prove that $\Sopk, \, \Aopk \in L\big(L^2(\Gamma),H^1(\Gamma)\big)$.
With $\dlonull \in L \big( \H^1(\Gamma), H^1(\Gamma) \big)$,
we conclude that $\dlok = \dlonull + \Sopk + \Aopk \in L\big(\H^1(\Gamma), H^1(\Gamma)\big)$.

\medskip

{\bf Step~5 (Proof of equation~(\ref{eq:invest:dlo})):} \quad  Let $v \in \H^1(\Gamma)$. 
Analogously to Step~2, the decomposition $\dlok = \dlonull + \Sopk + \Aopk$ implies that
 \begin{align*}
 \begin{split}
 	\norm{h_\coarse^{1/2} \, \nabla_\Gamma \, \dlok \tildv}{L^2(\Gamma)} 
 	&\leq \norm{h_\coarse^{1/2} \, \nabla_\Gamma \, \dlonull \tildv}{L^2(\Gamma)} 
	+ \norm{h_\coarse^{1/2} \, \nabla_\Gamma \, \Sopk \tildv}{L^2(\Gamma)} 
 		+ \norm{h_\coarse^{1/2} \, \nabla_\Gamma \, \Aopk \tildv}{L^2(\Gamma)}.
 \end{split}
 \end{align*}
We proceed as before.
First, \cite[Theorem~3.1]{invest} yields that
 \begin{align*}
 	\norm{h_\coarse^{1/2} \, \nabla_\Gamma \, \dlonull \, \tildv}{L^2(\Gamma)} 
 	\lesssim  \norm{v}{\H^{1/2}(\Gamma)} + \norm{h_\coarse^{1/2} \,\nabla_\Gamma \, v}{L^2(\Gamma)} .
 \end{align*}
Second, $\norm{h_\coarse}{L^\infty(\Gamma)} \lesssim \diam(\Omega) \lesssim 1$  and equation~\eqref{eq:mapping:sk} imply that
 \begin{align*}
 	\norm{h_\coarse^{1/2} \, \nabla_\Gamma \, \Sopk \, \tildv}{L^2(\Gamma)} 
 	\lesssim \norm{\Sopk \, \tildv}{H^1(\Gamma)} \stackrel{\eqref{eq:mapping:sk}}{\lesssim} k \, \norm{v}{L^2(\Gamma)}.
 \end{align*}
Third, we use equation~\eqref{eq:mapping:ak} to see that
\begin{align*}
\norm{h_\coarse^{1/2} \, \nabla_\Gamma \Aopk v}{L^2(\Gamma)} 
\lesssim \norm{\Aopk \, v}{H^1(\Gamma)}
\stackrel{\eqref{eq:mapping:ak}}\lesssim (1 + k^3) \, \norm{v}{L^2(\Gamma)}.
\end{align*}
 Combining the latter estimates, we obtain that 
 \begin{align*}
 		\norm{h_\coarse^{1/2} \, \nabla_\Gamma \, \dlok \tildv}{L^2(\Gamma)}  &\lesssim \norm{v}{\H^{1/2}(\Gamma)} + (1 + k^3) \, \norm{v}{L^2(\Gamma)}  + \norm{h_\coarse^{1/2} \,\nabla_\Gamma \, v}{L^2(\Gamma)} \\
 		&\lesssim (1+k^3) \, \norm{v}{\H^{1/2}(\Gamma)} + \norm{h_\coarse^{1/2} \,\nabla_\Gamma \, v}{L^2(\Gamma)}. 
 \end{align*}
 This concludes the proof of~\eqref{eq:invest:dlo}.

\medskip

{\bf Step~6 (Proof of equation~(\ref{eq:invest:hso})):} \quad
Recall the definition of $\hsok$. With $\dlpk = \dlpnull + \Sk + \Ak$
there holds  $\hsok = -\condint \, \dlpk = \hsonull - \condint \, \Sk - \condint \, \Ak$
and hence
\begin{align*}
	\norm{h_\coarse^{1/2} \, \hsok \tildv}{L^2(\Gamma)} &\leq \norm{h_\coarse^{1/2} \,  \hsonull \, \tildv}{L^2(\Gamma)}
	+ \norm{h_\coarse^{1/2} \,  \condint \Sk \, \tildv}{L^2(\Gamma)} + \norm{ h_\coarse^{1/2} \, \condint \, \Ak \, \tildv}{L^2(\Gamma)},
\end{align*}
We proceed as before.
First, \cite[Theorem~3.1]{invest} yields that
\begin{align*}
\norm{h_\coarse^{1/2} \, \hsonull \, \tildv}{L^2(\Gamma)} \lesssim \norm{v}{\H^{1/2}(\Gamma)} + \norm{h_\coarse^{1/2} \,\nabla_\Gamma \, v}{L^2(\Gamma)}.
\end{align*}
Recall from Step~4 that $\Sk \tildv, \Ak \tildv \in H^2(B_R)$ and hence $\condint \Sk \, \tildv, \, \condint \, \Ak \tildv \in H^{1/2}(\po)$.
As in Step~3, stability of $\condint$ gives
  \begin{align*}
  	\norm{h_\coarse^{1/2} \, \condint \, \Sk \, \tildv}{L^2(\Gamma)} &\lesssim \norm{\Sk \, \tildv}{H^2(B_R)} \stackrel{\eqref{eq:mapping:sk}}{\lesssim}
  	k \, \norm{v}{L^2(\Gamma)}, \\
  	\norm{h_\coarse^{1/2} \, \condint \, \Ak \, \tildv}{L^2(\Gamma)} &\lesssim \norm{\Ak \, \tildv}{H^{2}(B_R)} 
  		\stackrel{\eqref{eq:mapping:ak}}{\lesssim} (1+k^3) \, \norm{v}{L^2(\Gamma)}.
  \end{align*}
Combining the latter four estimates, we conclude the proof of~\eqref{eq:invest:hso}.

\medskip

{\bf Step~7 (Proof of equations~(\ref{eq:discrete:invest:slo})--(\ref{eq:discrete:invest:dlo})):}  \quad
 According to~\cite{ghs,Geo08} or \cite[Lemma~A.1]{invest}, there hold the following inverse estimates
	\begin{align}
		\norm{h_\coarse^{1/2} \, (p+1)^{-1} \, \Psi_\coarse}{L^2(\Gamma)} 
			&\lesssim \norm{\Psi_\coarse}{H^{-1/2}(\Gamma)} \quad \text{for all } \Psi_\coarse \in \PP^p(\TT_\coarse), \label{eq:smallinvest:one} \\
		\norm{h_\coarse^{1/2} \, (p+1)^{-1} \, \nabla_\Gamma V_\coarse}{L^2(\Gamma)} 
			&\lesssim \norm{V_\coarse}{\H^{1/2}(\Gamma)} \quad \text{for all } V_\coarse \in \widetilde{\SS}^p(\TT_\coarse),\label{eq:smallinvest:two}
	\end{align}
	where $p$ is the fixed polynomial degree. The hidden constant depends only on $\po$, $\Gamma$, and the shape regularity of $\TT_\coarse$.
	 Applying~\eqref{eq:smallinvest:one}--\eqref{eq:smallinvest:two} to the right-hand sides of equations~\eqref{eq:invest:slo} and~\eqref{eq:invest:hso}, we conclude~\eqref{eq:discrete:invest:slo}. Using~\eqref{eq:smallinvest:one}--\eqref{eq:smallinvest:two} to estimate the right-hand sides of~\eqref{eq:invest:dlo}
	 and~\eqref{eq:invest:hso}, we reveal~\eqref{eq:discrete:invest:dlo}. This concludes the proof. 
\end{proof}

\section{Adaptive algorithm}
\label{section:algorithm}

In this section, we introduce the adaptive algorithm as well as a suitable \textsl{a posteriori}
error estimator. We show that our ABEM
setting fits in the abstract framework 
of~\cite[Section 2]{helmholtz}, where an adaptive algorithm for compactly perturbed elliptic problems is analyzed and 
optimal algebraic convergence rates are proved.

\subsection{Framework}
\label{subsection:abstract}
We consider the model problem~\eqref{eq:modelproblem} in the following functional analytic framework. 
For each admissible triangulation $\TT_\coarse$, we consider $\TT_\coarse$-piecewise polynomial ansatz and test spaces $\PP^p(\TT_\coarse)$.
On Lipschitz boundaries $\po$, the operator $\Kopk := \slok-\slonull: H^{-1/2}(\po) \to H^{1/2}(\po)$ is compact;
see, e.g.,~\cite[Lemma 3.9.8]{sauterschwab} or~\cite[Section 6.9]{steinbach}.
This implies compactness of $\Kopk : \H^{-1/2}(\Gamma) \to H^{1/2}(\Gamma)$. 
Therefore, the model problem~\eqref{eq:modelproblem} can equivalently be reformulated as follows: 
Given $f \in H^{1/2}(\Gamma) $, find $\phi \in \H^{-1/2}(\Gamma)$ such that 
\begin{align}\label{eq:modelproblem:kompakt}
	\big( \slonull + \Kopk \big) \tildphi = f \quad \text{on } \Gamma.  
\end{align}
The weak formulation of~\eqref{eq:modelproblem:kompakt} 
thus seeks 
$\phi \in \H^{-1/2}(\Gamma) $ such that
\begin{align}\label{eq:cont_weakform:kompact}
 \dual{\slonull \, \tildphi}{\psi} + \dual{\Kopk \, \tildphi}{\psi} = \dual{f}{\psi} \quad \text{ for all } \psi \in \H^{-1/2}(\Gamma).
\end{align}
Recall that $\slonull$ is an elliptic and symmetric isomorphism. Hence,
\begin{align*}
	\bilina{\chi}{\psi} := \dual{\slonull \, \chi}{\psi} \quad \text{for all } \chi,\psi \in \H^{-1/2}(\Gamma),
\end{align*}
defines a scalar product that induces an equivalent energy norm 
$\enorm{\psi} := \bilina{\psi}{\psi}^{1/2}\simeq \norm{\psi}{\H^{-1/2}(\Gamma)}$ on $\H^{-1/2}(\Gamma)$. 
The Galerkin discretization 
seeks 
$\Phi_\coarse \in \PP^p(\TT_\coarse)$ such that
\begin{align}\label{eq:discreteform:kompakt}
	 \bilina{\Phi_\coarse }{\Psi_\coarse} + \dual{ \Kopk \, \Phi_\coarse }{\Psi_\coarse} = \dual{f}{\Psi_\coarse} \quad \text{ for all } \Psi_\coarse \in \PP^p(\TT_\coarse).
\end{align}
Existence of solutions to the~\eqref{eq:discreteform:kompakt} can be guaranteed by the following proposition which is applied for $\HH = \H^{-1/2}(\Gamma)$ and $\XX_\coarse = \PP^p(\TT_\coarse)$; 
 see~\cite[Theorem 4.2.9]{sauterschwab} or~\cite[Proposition 1]{helmholtz}. 

Similar results are also found in~\cite[Theorem 6.1]{epsbuch}, which is based on~\cite[Lemma 1.1]{cs87}, or in in~\cite[Theorem 5.7.8]{brennerscott}, where the proof relies on additional regularity of the dual problem.

\begin{proposition}\label{prop:ss}

Let $\HH$ be a separable Hilbert space and let $(\XX_\ell)_{\ell \in \N_0}$ be a dense sequence of discrete subspaces $\XX_\ell \subset \HH$, i.e., 
 $\min_{\Psi_\ell \in \XX_\ell} \norm{\psi - \Psi_\ell}{\HH} \to 0$ as $\ell \to \infty$ for all $\psi \in \HH$.
Let $\bilina{\cdot}{\cdot}$ be an hermitian continuous, and elliptic sesquilinear form on $\HH$.
Moreover, let $\Kop: \HH \to \HH^*$ be a compact operator and $f \in \HH^*$. Consider the following variational formulation: Find $\phi \in \HH$ such that
\begin{align}\label{eq:ss:model}
\bilinb{\phi}{\psi} := \bilina{\phi}{\psi} + \dual{\Kop \phi}{\psi} = \dual{f}{\psi} \quad \text{for all } \psi \in \HH.
\end{align}
 Suppose well-posedness of~\eqref{eq:ss:model}, i.e., for all $\phi \in \HH$, it holds that
 \begin{align}
 \phi = 0	\quad \Longleftrightarrow \quad \forall \psi \in \HH \quad \bilinb{\phi}{\psi} = 0.
 \end{align} 
 Then, there exists some index $\ell_\coarse \in \N$ such that for all discrete subspaces $\XX_{\coarse} \subsetneqq \HH$ with 
 $\XX_\coarse \supseteq \XX_{\ell_\coarse}$, the following holds:
 There exists $\beta>0$, which depends only on $\XX_{\ell_\coarse}$, such that the discrete inf--sup constant on $\XX_\coarse$ is uniformly bounded from below, i.e.,
 \begin{align}\label{eq:ssprop:infsup}
 	0 < \beta \leq \beta_\coarse := \inf_{\Phi_\coarse \in \XX_\coarse \setminus \{0\} } \sup_{\Psi_\coarse \in \XX_\coarse \setminus \{0\}}
 	\frac{|\bilinb{\Phi_\coarse}{\Psi_\coarse}|}{\norm{\Phi_\coarse}{\HH}\norm{\Psi_\coarse}{\HH}}.
 \end{align}
 In particular, the discrete formulation~\eqref{eq:discreteform:kompakt} admits a unique solution $\Phi_\coarse \in \XX_\coarse$
 and there
 exists $C>0$, which depends only on $\bilinb{\cdot}{\cdot}$ and $\beta$, but not on $\XX_\coarse$, such that 
 \begin{align}\label{eq:ssprop:cea}
 	\norm{\phi - \Phi_\coarse}{\HH} \leq C \min_{\Psi_\coarse \in \XX_\coarse} \norm{\phi - \Psi_\coarse}{\HH},
 \end{align}
 i.e., uniform 
 validity of the C\'ea lemma.
 If the spaces $\XX_\ell$ are nested, i.e., $\XX_{\ell} \subseteq \XX_{\ell +1}$ for all $\ell \in \N_0$, the latter guarantees convergence 
 $\norm{\phi - \Phi_\ell}{\HH} \to 0$ as $\ell \to \infty$. \qed
\end{proposition}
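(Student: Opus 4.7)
The plan is to prove the discrete inf--sup estimate~\eqref{eq:ssprop:infsup} by the standard Schatz-type contradiction argument, and then derive the remaining conclusions (unique solvability, C\'ea lemma, and convergence) as immediate corollaries via the Banach--Ne\v{c}as--Babu\v{s}ka framework. The continuous problem~\eqref{eq:ss:model} is well-posed by assumption, and uniqueness together with the Fredholm alternative (applicable since $A^{-1}\Kop$ is compact when $A\colon\HH\to\HH^*$ denotes the Riesz-type isomorphism induced by $a(\cdot,\cdot)$) yields existence of a unique $\phi\in\HH$.

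For the key estimate~\eqref{eq:ssprop:infsup}, I would argue by contradiction. Suppose no such $\ell_\coarse$ exists; then there is a sequence of indices $\ell_n\to\infty$ and elements $\Phi_n\in\XX_{\ell_n}$ with $\norm{\Phi_n}{\HH}=1$ such that
\begin{align*}
\sup_{\Psi\in\XX_{\ell_n}\setminus\{0\}}
\frac{|b(\Phi_n,\Psi)|}{\norm{\Psi}{\HH}}\longrightarrow 0
\quad\text{as }n\to\infty.
\end{align*}
Since $(\Phi_n)$ is bounded in the Hilbert space $\HH$, a subsequence (not relabelled) converges weakly to some $\phi\in\HH$. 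Compactness of $\Kop$ upgrades this to strong convergence $\Kop\Phi_n\to\Kop\phi$ in $\HH^*$. For any $\psi\in\HH$, density of $\bigcup_\ell\XX_\ell$ provides $\Psi_n\in\XX_{\ell_n}$ with $\Psi_n\to\psi$ strongly. Combining weak convergence of $\Phi_n$, strong convergence of $\Psi_n$, continuity of $a(\cdot,\cdot)$, and strong convergence of $\Kop\Phi_n$ yields $b(\Phi_n,\Psi_n)\to b(\phi,\psi)$. On the other hand, the contradiction hypothesis gives $|b(\Phi_n,\Psi_n)|\le o(1)\,\norm{\Psi_n}{\HH}\to 0$. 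Hence $b(\phi,\psi)=0$ for all $\psi\in\HH$, and well-posedness of~\eqref{eq:ss:model} forces $\phi=0$. Consequently $\Phi_n\rightharpoonup 0$ and therefore $\langle\Kop\Phi_n,\Phi_n\rangle\to 0$. Testing with $\Psi=\Phi_n$ in the defect bound gives $|a(\Phi_n,\Phi_n)|\le|b(\Phi_n,\Phi_n)|+|\langle\Kop\Phi_n,\Phi_n\rangle|\to 0$, and ellipticity of $a(\cdot,\cdot)$ yields $\norm{\Phi_n}{\HH}\to 0$, contradicting $\norm{\Phi_n}{\HH}=1$.

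With~\eqref{eq:ssprop:infsup} established, the Banach--Ne\v{c}as--Babu\v{s}ka theorem immediately delivers unique solvability of the discrete problem on every $\XX_\coarse\supseteq\XX_{\ell_\coarse}$. For the C\'ea estimate~\eqref{eq:ssprop:cea}, I would use Galerkin quasi-orthogonality: for arbitrary $\Psi_\coarse\in\XX_\coarse$ one has $b(\phi-\Phi_\coarse,\Xi_\coarse)=0$ for all $\Xi_\coarse\in\XX_\coarse$, so
\begin{align*}
\beta\,\norm{\Phi_\coarse-\Psi_\coarse}{\HH}
\le\sup_{\Xi_\coarse\in\XX_\coarse\setminus\{0\}}
\frac{|b(\Phi_\coarse-\Psi_\coarse,\Xi_\coarse)|}{\norm{\Xi_\coarse}{\HH}}
=\sup_{\Xi_\coarse\in\XX_\coarse\setminus\{0\}}
\frac{|b(\phi-\Psi_\coarse,\Xi_\coarse)|}{\norm{\Xi_\coarse}{\HH}}
\le\norm{b}\,\norm{\phi-\Psi_\coarse}{\HH},
\end{align*}
and the triangle inequality yields~\eqref{eq:ssprop:cea} with $C=1+\norm{b}/\beta$. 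Finally, if the spaces are nested, density together with $\XX_{\ell_\coarse}\subseteq\XX_\ell$ for all $\ell\ge\ell_\coarse$ guarantees $\min_{\Psi_\ell\in\XX_\ell}\norm{\phi-\Psi_\ell}{\HH}\to 0$, so~\eqref{eq:ssprop:cea} implies $\norm{\phi-\Phi_\ell}{\HH}\to 0$.

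The main obstacle is the contradiction step: one must carefully exploit the interplay between weak convergence (from boundedness), strong convergence of $\Kop\Phi_n$ (from compactness), and ellipticity to close the argument. Everything else is a mechanical consequence of standard Hilbert space Galerkin theory.
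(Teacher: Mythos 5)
The paper itself gives no proof of this proposition --- it simply cites~\cite[Theorem 4.2.9]{sauterschwab} and~\cite[Proposition 1]{helmholtz} and closes with \qed. Your proposal supplies the classical Schatz-type compactness/contradiction argument, which is indeed the content of the cited references, and the overall structure (contradiction sequence, weak compactness, upgrade of $\Kop\Phi_n$ to strong convergence, density to identify the weak limit, ellipticity to finish; then BNB and the C\'ea estimate) is correct.

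One point of imprecision is worth flagging, because as written your contradiction step proves a strictly weaker statement than the proposition claims. The conclusion asserts a lower bound on $\beta_\coarse$ for \emph{every} discrete subspace $\XX_\coarse \supseteq \XX_{\ell_\coarse}$, not only for $\XX_\coarse = \XX_\ell$ with $\ell \geq \ell_\coarse$. Your negation should therefore produce, for each $n$, an arbitrary discrete subspace $\XX^{(n)} \supseteq \XX_{\ell_n}$ with $\beta_{\XX^{(n)}} < 1/n$ and a unit vector $\Phi_n \in \XX^{(n)}$ satisfying
\begin{align*}
\sup_{\Psi\in\XX^{(n)}\setminus\{0\}}\frac{|b(\Phi_n,\Psi)|}{\norm{\Psi}{\HH}}< \frac{1}{n},
\end{align*}
rather than $\Phi_n \in \XX_{\ell_n}$. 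Fortunately the rest of your argument is already set up to tolerate this: the density step only requires $\Psi_n \in \XX_{\ell_n} \subseteq \XX^{(n)}$, so the defect bound applies to $\Psi_n$; and taking $\Psi = \Phi_n \in \XX^{(n)}$ in the final step is likewise legitimate. Weak compactness and compactness of $\Kop$ are indifferent to which finite-dimensional space $\Phi_n$ belongs to. So after correcting the setup of the contradiction hypothesis, the argument goes through verbatim and yields the full statement. The remainder of your proposal (unique discrete solvability from inf--sup in finite dimensions, Galerkin orthogonality and the stability bound giving $C = 1 + \norm{b}{}/\beta$, convergence from nestedness and density) is standard and correct.
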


\subsection{Mesh-refinement}
\label{section:mesh}
From now on, suppose that $\TT_0$ is a given $\gamma$-shape regular triangulation of $\Gamma$.
For mesh-refinement, we consider 2D newest vertex bisection (NVB) for $d=3$ (see e.g.,~\cite{stevenson08}), 
or extended 1D bisection (EB) from~\cite{affkp} for $d=2$.
Given a $\gamma$-shape regular triangulation $\TT_\coarse$ and a set of marked elements $\MM_\coarse\subseteq\TT_\coarse$,
the call $\TT_\fine={\rm refine}(\TT_\coarse,\MM_\coarse)$ returns for both refinement strategies the coarsest refinement $\TT_\fine$ of $\TT_\coarse$ such that all $T\in\MM_\coarse$ have been refined, i.e.,
\begin{itemize}
\item $\MM_\coarse\subseteq\TT_\coarse\backslash\TT_\fine$,
\item the number of elements $\#\TT_\fine$ is minimal amongst all other refinements $\TT'$ of $\TT_{\coarse}$.
\end{itemize}
Furthermore, we write $\TT_\fine\in \refine(\TT_\coarse)$ if $\TT_\fine$ is obtained by a finite number of refinement steps, i.e., there exists $n\in\N_0$ as well as a finite sequence $\TT^{(0)},\dots,\TT^{(n)}$ of triangulations and corresponding sets $\MM^{(j)}\subseteq\TT^{(j)}$ such that
\begin{itemize}
\item $\TT_\coarse = \TT^{(0)}$,
\item $\TT^{(j+1)} = {\rm refine}(\TT^{(j)},\MM^{(j)})$ for all $j=0,\dots,n-1$,
\item $\TT_\fine = \TT^{(n)}$.
\end{itemize}
In particular, $\TT_\coarse\in{\rm refine}(\TT_\coarse)$. To abbreviate notation, we let $\T:={\rm refine}(\TT_0)$ be the set of all possible triangulations which can be obtained from the initial triangulation $\TT_0$. 

Both refinement strategies guarantee uniform $\gamma$-shape regularity of all $\TT_\coarse \in \T$, where $\gamma$ depends only on $\TT_0$. Hence, Lemma~\ref{lemma:mesh}
applies for any triangulation $\TT_\coarse \in \T$.
Moreover, for all $T \in \TT_\coarse$, it holds that $T = \bigcup\big\{T'\in\TT_\fine\,:\,T'\subseteq T\big\}$.
In the following, we recall further properties of these mesh-refinement strategies, which are exploited below. 

First, refining an element results in at least $2$ and at most $\Cson$ sons, where $\Cson =2$ for EB and $\Cson =4$ for NVB; 
see e.g.,~\cite{kpp} for NVB and~\cite[Section 3]{affkp} for EB.
In particular, it holds that
\begin{align}\label{mesh:sons}
\# (\TT_\coarse \setminus \TT_\fine) + \# \TT_\coarse \leq \# \TT_\fine \quad \text{for all} \,  \TT_\coarse \in \T \text{ and all } \TT_\fine \in \refine(\TT_\coarse).
\end{align}

Second, refinement of an element yields a contraction of the local mesh-size function. 
Even though the proof is found, e.g., in~\cite{gantnerphd}, we include it
for the sake of completeness.  

\begin{lemma}\label{mesh:lemma:meshsize}
There exist $0 < \qmesh <1$, such that for all $\TT_\coarse,\TT_\fine \in \T$ with $\TT_\fine \in \refine(\TT_\coarse)$,
it holds that $h_\fine|_T \leq \qmesh \, h_\coarse|_T$ on all $T \in \TT_\coarse \setminus \TT_\fine$. 
\end{lemma}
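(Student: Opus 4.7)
The plan is to exploit the fact that both refinement strategies (NVB for $d=3$ and EB for $d=2$) act by bisection, and that $T \in \TT_\coarse \setminus \TT_\fine$ means $T$ was necessarily refined at least once along the chain $\TT^{(0)}, \dots, \TT^{(n)}$ linking $\TT_\coarse$ to $\TT_\fine$. First I would unpack the pointwise inequality: by the property $T = \bigcup \{T' \in \TT_\fine : T' \subseteq T\}$ recalled just before the lemma, the function $h_\fine|_T$ takes the constant value $h_\fine(T')$ on each son $T'$ of $T$ lying in $\TT_\fine$. Hence the claim reduces to showing $h_\fine(T') \leq \qmesh\, h_\coarse(T)$ for every such son, with $\qmesh$ independent of $\TT_\coarse,\TT_\fine,T$.

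Next I would use the bisection property of both refinement strategies: a single refinement step applied to an element produces (at least two) sons, each of which has exactly half the volume (length for $d=2$, area for $d=3$) of its parent. For NVB this is the standard property of newest vertex bisection; for EB it is the defining property of the 1D bisection from~\cite{affkp}. By a trivial induction on the length $n$ of the refinement chain, any descendant $T' \in \TT_\fine$ with $T' \subsetneq T$ satisfies $|T'| \leq |T|/2$, the worst case being that $T$ is bisected exactly once.

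Finally, translating volumes into mesh-sizes via $h(T) = |T|^{1/(d-1)}$ gives
\begin{align*}
h_\fine(T') = |T'|^{1/(d-1)} \leq \big(|T|/2\big)^{1/(d-1)} = 2^{-1/(d-1)}\, h_\coarse(T),
\end{align*}
so the choice $\qmesh := 2^{-1/(d-1)} \in (0,1)$ (i.e.\ $\qmesh = 1/2$ for $d=2$ and $\qmesh = 1/\sqrt{2}$ for $d=3$) proves the claim. There is no real obstacle here; the only point requiring care is the clean bookkeeping between the two refinement strategies and the correct exponent $1/(d-1)$ arising from the definition of $h_\coarse$.
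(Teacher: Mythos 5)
The crucial step in your argument---that ``a single refinement step applied to an element produces (at least two) sons, each of which has exactly half the volume''---is false in the setting of this paper, and this is exactly the difficulty the paper's proof is designed to handle. Here the elements $T\in\TT_\coarse$ are \emph{curved} surface (resp.\ curve) pieces of the piecewise-$C^\infty$ boundary $\Gamma$, parametrized by smooth element maps $g_T\in C^\infty(\Tref,T)$. Both NVB and EB bisect on the \emph{reference element} $\Tref$, so the two sons of $\widetilde{T}\subseteq\Tref$ have equal reference volume, but their physical measures are $\int_{\widetilde{T}'}|\det G_T(t)|^{1/2}\,dt$ with $G_T$ the Gramian of $g_T$, and these are not equal unless $g_T$ is affine. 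Consequently $|T'|\le|T|/2$ need not hold, and your proposed $\qmesh=2^{-1/(d-1)}$ is not available.

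The paper closes this gap by a compactness/contradiction argument: it supposes a sequence of parent/son pairs with $|T^n_\fine|/|T^n_\coarse|\to 1$, pulls everything back to $\Tref$ via $g_T$, uses that bisection does halve the \emph{reference} measure, and then invokes $\gamma$-shape regularity (which forces $|\det G_T(x)|\simeq h_\coarse(T)^{2(d-1)}$ uniformly in $x\in\Tref$) to transfer the reference-element lower bound $1/2$ to the physical element, up to $\gamma$-dependent constants. This yields a strictly-less-than-one contraction factor $\qmesh$ that depends on the shape-regularity constant $\gamma$, not the explicit value $2^{-1/(d-1)}$. To repair your argument you would need to insert exactly this step: replace ``each son has half the volume'' by ``each son's physical volume is at most a $\gamma$-dependent fraction $q<1$ of the parent's'', proved via the uniform comparability $|\det G_T(x)|^{1/2}\simeq |T|$ on $\Tref$, after which the rest of your bookkeeping (sons constant on $h_\fine$, worst case a single bisection, exponent $1/(d-1)$) goes through.
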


\begin{proof}
	We argue by contradiction. To this end, let $(\TT_\coarse^{n})_{n \in \N},(\TT_\fine^{n})_{n \in \N} \subset \T$ be sequences of
	refinements with $\TT_\fine^{n} \in \refine(\TT_\coarse^{n})$ and elements 
	$T_\coarse^n  \in \TT_\coarse^{n} \setminus \TT_\fine^{n}$ as well as $T_\fine^n  \in \TT_\fine^{n} \setminus  \TT_\coarse^{n}$ such that
	\begin{align*}
	 T_\fine^n \subsetneqq T_\coarse^n \quad \text{as well as} \quad \frac{|T_\coarse^n|}{|T_\fine^n|} \to 1 \quad \text{as } n \to \infty.
	\end{align*}  
	This implies that $|T_\coarse^n \setminus T_\fine^n| / |T_\coarse^n| \to 0$ as $n \to \infty$. 
	Further, for all $n \in \N$ there exists $T \in \TT_0$ such $T_\fine^{n} \subsetneqq T_\coarse^{n} \subseteq T$.
	We obtain a corresponding sequence $\widetilde{T}_\fine^n \subsetneqq \widetilde{T}_\coarse^n \subseteq \Tref$ 
	with $g_{T}(\widetilde{T}_\coarse^n) = T_\coarse^n$ as well as $g_{T}(\widetilde{T}_\fine^n) = T_\fine^n$.
	Since bisection is done at first on the reference element, it holds that $|\widetilde{T}_\fine^n|\leq |\widetilde{T}_\coarse^n| /2 $ for all $n \in \N_0$.
	Then, $\gamma$-shape regularity implies that $ | \det G_{T}(x) | \simeq ( h_\coarse(T))^{2(d-1)} = |T|^2$ for all $x \in \Tref$. This reveals the contradiction
		\begin{align*}
			\frac{1}{2} \leq \frac{|\widetilde{T}_\coarse^n \setminus \widetilde{T}_\fine^n|}{|\widetilde{T}_\coarse^n|} 
			\simeq 	\frac{ \int_{\widetilde{T}_\coarse^n \setminus \widetilde{T}_\fine^n } |\det G_{T}(t)|^{1/2}\, dt }{\int_{\widetilde{T}_\coarse^n } |\det G_{T}(t)|^{1/2} \, dt}
			= \frac{|T_\coarse^n \setminus T_\fine^n|}{|T_\coarse^n|} \stackrel{n \to \infty}{\longrightarrow} 0
		\end{align*}
	and this concludes the proof. 
\end{proof}

Third, for a sequence $( \TT_\ell)_{\ell \in \N_0}$ with $\TT_\ell = \refine(\TT_{\ell-1}, \MM_{\ell-1})$ 
for arbitrary $\MM_{\ell-1} \subseteq \TT_{\ell-1}$, EB and NVB satisfy the mesh-closure estimate 
\begin{align}\label{mesh:mesh-closure}
\# \TT_\ell - \# \TT_0 \leq \Cmesh \sum_{j =0}^{\ell -1} \# \MM_j \quad \text{for all} \, \ell \in \N,
\end{align}
where the constant $\Cmesh\ge1$ depends only on the initial mesh $\TT_0$. 
In particular,~\eqref{mesh:mesh-closure} guarantees that the number of additional refinements of elements, 
in order to avoid hanging nodes and preserve conformity (NVB) or to preserve $\gamma$-shape regularity (EB), 
does not dominate the number of marked elements. 
For newest vertex bisection, the mesh-closure estimate has first been proved for $d=2$ in~\cite{bdd} and later for $d \geq 2$ in \cite{stevenson08}. 
While both works require an additional admissibility assumption on $\TT_0$, \cite{kpp} proved that this condition is redundant for $d=2$. 
For EB,~\eqref{mesh:mesh-closure} is proved in~\cite[Theorem 2.3]{affkp}.

Finally, we recall the 
overlay-estimate; For all $\TT \in \T$ as well as $\TT_\coarse,\TT_\fine \in \refine(\TT)$
 there exists a common refinement $\TT_\coarse \oplus \TT_\fine \in \refine(\TT_\coarse) \cap \refine(\TT_\fine) \subseteq \refine(\TT)$, such that 
\begin{align}\label{mesh:overlay}
 \# (\TT_\fine \oplus \TT_\coarse) \leq \# \TT_\fine + \# \TT_\coarse - \# \TT.
\end{align}
For NVB, the proof is found in \cite{ckns,stevenson07}.  
For EB, the proof is trivial; see~\cite{affkp}.

\subsection{Residual \textsl{a posteriori} error estimator}
\label{subsection:errorestimator}
Let $\TT_\coarse \in \refine(\TT_0)$.
Suppose that $f \in H^1(\Gamma)$ and that the solution $\Phi_\coarse \in \PP^p(\TT_\coarse)$ of~\eqref{eq:discreteform:kompakt} exists. Recall that $ \slok: L^2(\Gamma) \to H^1(\Gamma)$. Therefore, we can compute for all $T \in \TT_\coarse$ the local refinement indicators 
$\eta_\coarse(T) := \norm{h_\coarse^{1/2} \, \nabla_{\Gamma} \, (\slok \Phi_\coarse - f)}{L^2(T)} \geq0$ as well as the corresponding \textsl{a posteriori}
error estimator 
\begin{align}\label{eq:estimator}
\eta_\coarse :=  \eta_\coarse(\TT_\coarse) \quad \text{with} \quad  
\eta_\coarse(\UU_\coarse) := \Big( \sum_{T \in \UU_\coarse} \eta_\coarse(T)^2 \Big)^{1/2}  \quad \text{for all } \, \UU_\coarse \subseteq \TT_\coarse.
\end{align}
For $\UU_\coarse \subseteq \TT_\coarse$, define
\begin{align*}
\bigcup \UU_\coarse := \{x \in \Gamma : \exists T \in \TT_\coarse, x \in T\}.
\end{align*}%
It holds that
$\eta_\coarse(\UU_\coarse) = \norm{h^{1/2}\, \nabla_{\Gamma} \, (\slok \Phi_\coarse - f)}{L^2(\bigcup \UU_\coarse )}$.
The error estimator~\eqref{eq:estimator} has first been proposed for \textsl{a posteriori} BEM error control for the 
weakly-singular integral equation in 2D in~\cite{cs95,cc96} and later in 3D in~\cite{cms}.

\subsection{Adaptive algorithm}
\label{subsection:algorithm}
Based on the error estimator $\eta_\coarse$
we consider the following algorithm, 
where the expanded making strategy in Step(iv)--(v) goes back to~\cite{helmholtz}.

\begin{algorithm}\label{algorithm}
\textsc{Input:} Parameters $0<\theta\le1$ and $\Cmark\ge1$ as well as initial triangulation $\TT_0$ with $\Phi_{-1}:=0\in \PP^p(\TT_0)$  and $\eta_{-1}:=1$.\\
\textsc{Adaptive loop:} For all $\ell=0,1,2,\dots$, iterate the following Steps~\rm{(i)--(vi)}:
\begin{itemize}
\item[\rm{(i)}] \framebox{{\tt If:}} \quad \eqref{eq:discreteform:kompakt} does not admit a unique solution in $\PP^p(\TT_\ell)$: 
	\begin{itemize}
 		\item Define $\Phi_\ell:=\Phi_{\ell-1}\in \PP^p(\TT_0)$ and $\eta_\ell:=\eta_{\ell-1}$.
 		\item Let $\TT_{\ell+1}:=\refine(\TT_{\ell},\TT_{\ell})$ be the uniform refinement of $\TT_\ell$,
 		\item Increase $\ell \to \ell+1$ and continue with Step~\rm{(i)}.
 	\end{itemize}
\medskip
\item[\rm{(ii)}] \framebox{{\tt Else:}} \quad compute the unique solution $\Phi_\ell\in \PP^p(\TT_\ell)$ to~\eqref{eq:discreteform:kompakt}.
\smallskip
\item[\rm{(iii)}] Compute the corresponding indicators $\eta_\ell(T)$ for all $T\in\TT_\ell$.
\item[\rm{(iv)}] Determine a set $\MM_\ell' \subseteq\TT_\ell$ of up to the multiplicative factor $\Cmark$ minimal cardinality such that $\theta\eta_\ell^2\le \eta_\ell(\MM_\ell')^2$. 
\item[\rm{(v)}] Find $\MM_\ell'' \subseteq\TT_\ell$ such that $\# \MM_\ell'' = \# \MM_\ell'$ as well as $h_\ell(T) \geq h_\ell(T')$ for all $T \in \MM_\ell'' $ and $T' \in \TT_\ell \setminus \MM_\ell''$. Define $\MM_\ell := \MM_\ell' \cup \MM_\ell''$.
\item[\rm{(vi)}] Generate $\TT_{\ell+1}:=\refine(\TT_\ell,\MM_\ell)$,  increase $\ell \to \ell+1$, and continue with Step~\rm{(i)}.
\end{itemize}
\textsc{Output:} Sequences of successively refined triangulations $\TT_\ell$, discrete solutions $\Phi_\ell$, and corresponding estimators $\eta_\ell$.
\end{algorithm}

\begin{remark}
\begin{explain}
\item Apart from Step~{\rm(i)} and Step~{\rm(v)}, Algorithm~\ref{algorithm} is the usual adaptive loop based on the D\"orfler marking strategy~\cite{doerfler} in Step~{\rm(iv)} as used, e.g., in~\cite{ckns,ffp,axioms} as well as~\cite{ffkmp:part1,ffkmp:part2,fkmp}.
\item While $\Cmark=1$ requires to sort the indicators and hence leads to log-linear effort, Stevenson~\cite{stevenson07} showed that $\Cmark=2$ allows to determine $\MM_\ell'$ in linear complexity.

\item Step~{\rm(v)} of Algorithm~\ref{algorithm} is called expanded D\"orfler marking and ensures $\norm{h_\ell}{L^\infty(\Gamma)} \to \infty$ as $\ell \to \infty$, see~\cite[Proposition 16]{helmholtz}.
In particular, Step~{\rm(v)} implies $\overline{\bigcup_{\ell \in \N_0} \PP^p(\TT_\ell)} = \H^{-1/2}(\Gamma)$. This guarantees definiteness and hence well-posedness of~\eqref{eq:cont_weakform:kompact} on the discrete limit space, i.e., \cite[Axiom (A5)]{helmholtz} is satisfied. 
\end{explain}

\end{remark}

\subsection{Properties of the error estimator}
\label{subsetcion:estimatorproperties}
The proof of convergence with optimal algebraic rates for the adaptive scheme relies on the following essential properties of the 
\textsl{a posteriori} error estimator. These, so-called \emph{ axioms of adaptivity} are found in~\cite[Section 2.3]{helmholtz} and 
slightly generalize those of~\cite{axioms}. This is due to the fact that we always have to  guarantee unique solvability of the discrete 
problem~\eqref{eq:discreteform:kompakt} in order to compute the corresponding error estimator.

\begin{lemma}
\label{lemma:axioms}
There exist $\Cstab, \Cred, \Crel, \Crel >0$ and $0<q_{\rm red}<1$
such that for all $\TT_\coarse\in \T$ and all $\TT_\fine \in {\rm refine}(\TT_\coarse)$ the following implication holds:
Provided that the discrete solutions $\Phi_\coarse\in \PP^p(\TT_\coarse)$ and $\Phi_\fine\in \PP^p(\TT_\fine)$ exist, 
there holds the following~{\rm (i)--(iv)}:
\begin{enumerate}
	
\item[{\bf (i)}] \textbf{Stability on non-refined element domains}
\begin{align}\label{eq:stability}
	\big|\eta_\fine(\TT_\fine \cap\TT_\coarse)-\eta_\coarse(\TT_\fine \cap \TT_\coarse)\big| \le C_{\rm stb}\,\|{\Phi_\fine -\Phi_\coarse}\|_{\H^{-1/2}(\Gamma)}.
\end{align}

\item[{\bf (ii)}] \textbf{Reduction on refined element domains}
\begin{align}\label{eq:reduction}
\eta_\fine(\TT_\fine \backslash \TT_\coarse)^2 \le q_{\rm red}\,\eta_\coarse(\TT_\coarse \backslash \TT_\fine)^2 + C_{\rm red} \,\|{\Phi_\fine-\Phi_\coarse}\|_{\H^{-1/2}(\Gamma)}^2.
\end{align}

\item[{\bf (iii)}] \textbf{Discrete reliability}
\begin{align}\label{eq:discrete:reliability}
\|{\Phi_\fine-\Phi_\coarse}\|_{\H^{-1/2}(\Gamma)} \le \Crel \,\beta_\fine^{-1}\,\eta_\coarse(\RR_{\coarse,\fine}),
\end{align}
where $\beta_\fine$ is the discrete inf-sup constant from~\eqref{eq:ssprop:infsup} on $\PP^p(\TT_\fine)$ 
and $\RR_{\coarse,\fine} := \omega_{\coarse} (\TT_\coarse \setminus \TT_\fine) \subseteq \TT_\coarse$.
In particular, it holds that $\TT_\coarse \backslash \TT_\fine \subseteq \RR_{\coarse,\fine}$ as well as $\#\mathcal{R}_{\coarse,\fine} \le \Crel \,\#(\TT_\coarse\backslash\TT_\fine)$.

\item[{\bf (iv)}] \textbf{Reliability}
\begin{align*}
\|{\phi-\Phi_\coarse}\|_{\H^{-1/2}(\Gamma)} \le \Crel'\,\eta_\coarse.
\end{align*}

\end{enumerate}
The involved constants $\Cstab, \Cred, \Crel, \Crel ,\qred>0$ depend only on the given data, the polynomial degree $p$, the initial mesh $\TT_0$, and $\gamma$-shape regularity.
\end{lemma}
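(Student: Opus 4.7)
The plan is to adapt the established axioms-of-adaptivity proofs from the Laplace case ($k=0$) in~\cite{ffkmp:part1,fkmp,axioms} to the Helmholtz setting by systematically replacing the inverse-type estimates of~\cite{invest} with the new Helmholtz counterparts from Theorem~\ref{theorem:invest}. Since the wavenumber $k$ is fixed throughout the adaptive loop, the factor $(1+k^3)$ appearing in~\eqref{eq:discrete:invest:slo} is absorbed into all hidden constants. The residual structure $\slok \Phi_\coarse - f \in H^1(\Gamma)$, which underpins the weighted-residual estimator, is guaranteed by the mapping property~\eqref{eq:invest:wellposedness} that we already established.

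For stability~{\rm (i)}, I would use that $h_\fine = h_\coarse$ on every $T \in \TT_\fine \cap \TT_\coarse$ and apply the reverse triangle inequality in $L^2(T)$ to obtain $|\eta_\fine(T) - \eta_\coarse(T)| \le \|h_\coarse^{1/2} \nabla_\Gamma \slok (\Phi_\fine - \Phi_\coarse)\|_{L^2(T)}$. Summing over $T \in \TT_\fine \cap \TT_\coarse$ and applying the discrete inverse estimate~\eqref{eq:discrete:invest:slo} to $\Phi_\fine - \Phi_\coarse \in \PP^p(\TT_\fine)$ bounds the right-hand side by $\|\Phi_\fine - \Phi_\coarse\|_{\H^{-1/2}(\Gamma)}$. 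For reduction~{\rm (ii)}, Lemma~\ref{mesh:lemma:meshsize} provides $h_\fine(T) \le \qmesh \, h_\coarse(T')$ whenever $T \in \TT_\fine \setminus \TT_\coarse$ is contained in $T' \in \TT_\coarse \setminus \TT_\fine$. A triangle inequality plus Young's inequality with a small parameter $\delta > 0$, followed by summation and~\eqref{eq:discrete:invest:slo} on the discrete difference, yields~\eqref{eq:reduction} with $\qred := (1+\delta)\qmesh^2 < 1$ for $\delta$ sufficiently small.

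The main obstacle is discrete reliability~{\rm (iii)}. Starting from the discrete inf-sup bound from Proposition~\ref{prop:ss},
\[
\|\Phi_\fine - \Phi_\coarse\|_{\H^{-1/2}(\Gamma)} \le \beta_\fine^{-1} \sup_{0\neq \Psi_\fine \in \PP^p(\TT_\fine)} \frac{|\bilinb{\Phi_\fine - \Phi_\coarse}{\Psi_\fine}|}{\|\Psi_\fine\|_{\H^{-1/2}(\Gamma)}},
\]
nested Galerkin orthogonality on $\TT_\coarse \subseteq \TT_\fine$ gives $\bilinb{\Phi_\fine - \Phi_\coarse}{\Psi_\fine} = \dual{f - \slok \Phi_\coarse}{\Psi_\fine - J_\coarse \Psi_\fine}$ for any $J_\coarse \Psi_\fine \in \PP^p(\TT_\coarse)$. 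I would take $J_\coarse$ to be a Scott--Zhang-type quasi-interpolant that reproduces $\Psi_\fine|_T$ on every non-refined $T \in \TT_\coarse \cap \TT_\fine$, so that $\mathrm{supp}(\Psi_\fine - J_\coarse \Psi_\fine) \subseteq \bigcup \RR_{\coarse,\fine}$. Since $\slok \Phi_\coarse - f \in H^1(\Gamma)$ by~\eqref{eq:invest:wellposedness}, the duality pairing can then be estimated by the standard Poincaré-type/approximation argument as in~\cite{fkmp,axioms}, giving
\[
|\dual{f - \slok \Phi_\coarse}{\Psi_\fine - J_\coarse \Psi_\fine}| \lesssim \|h_\coarse^{1/2} \nabla_\Gamma (\slok \Phi_\coarse - f)\|_{L^2(\bigcup \RR_{\coarse,\fine})} \, \|\Psi_\fine\|_{\H^{-1/2}(\Gamma)},
\]
which is~\eqref{eq:discrete:reliability}. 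The cardinality bound $\#\RR_{\coarse,\fine} \le \Crel \, \#(\TT_\coarse \setminus \TT_\fine)$ follows from the bounded patch overlap in Lemma~\ref{lemma:mesh}{\rm (b)}. The hard part here is precisely the localization-plus-approximation step; the Helmholtz-specific ingredient is only that we now use the inf-sup constant $\beta_\fine$ in place of ellipticity of $\slonull$.

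Reliability~{\rm (iv)} is then obtained from discrete reliability through the abstract framework of~\cite[Section 2]{helmholtz}: given $\TT_\coarse$, one chooses $\TT_\fine \in \refine(\TT_\coarse)$ sufficiently fine that $\|\phi - \Phi_\fine\|_{\H^{-1/2}(\Gamma)}$ is arbitrarily small (possible by density of $\bigcup_\ell \PP^p(\TT_\ell)$ in $\H^{-1/2}(\Gamma)$, which is ensured by Step~{\rm (v)} of Algorithm~\ref{algorithm}, together with Proposition~\ref{prop:ss}), then combines the triangle inequality with~\eqref{eq:discrete:reliability} and the trivial bound $\eta_\coarse(\RR_{\coarse,\fine}) \le \eta_\coarse$ to conclude.
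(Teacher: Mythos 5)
Your proposal is correct and follows essentially the same route as the paper's proof: triangle inequalities and the discrete inverse estimate~\eqref{eq:discrete:invest:slo} for (i)--(ii), discrete inf--sup with Galerkin orthogonality, a local projection, and a localization argument for (iii), and a density argument combined with (iii) for (iv). Two small remarks: in (iii), since $\PP^p(\TT_\coarse)$ consists of \emph{discontinuous} piecewise polynomials, no Scott--Zhang-type operator is needed---the paper uses the elementary truncation $\pi_\coarse$ (set $\Psi_\fine$ to zero on refined elements, copy it elsewhere) and then localizes the $H^{1/2}$-duality to $\bigcup\RR_{\coarse,\fine}$ via a partition-of-unity cut-off $\chi=\sum_{z\in\NN_\coarse^{\RR}}\rho_z$ built from coarse hat functions, following~\cite{cms}; and in (ii) the paper's reduction factor is $\qred=(1+\delta)\,\qmesh$, not $(1+\delta)\,\qmesh^2$, since $h_\fine\le\qmesh h_\coarse$ enters the squared $L^2$-norm only linearly.
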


For the finite element method (FEM), the validity of these \emph{axioms of adaptivity} is well-known; see~\cite{axioms} for problems in the frame of the Lax--Milgram lemma or~\cite{helmholtz} for compactly perturbed elliptic problems. For BEM, the verification of the \emph{stability} and \emph{reduction} axioms is more involved than for the FEM and requires novel inverse estimates in the spirit of Theorem~\ref{theorem:invest}: For the Laplace equation, we refer to~\cite{ffkmp:part1,fkmp,gantumur}
as well as the overview in~\cite{axioms}. 
For BEM for the Helmholtz equation, most of the proofs are similar to the Laplace-case. 
However, for the sake of completeness and to underline that Theorem~\ref{theorem:invest} is crucial, the proof of Lemma~\ref{lemma:axioms} is given in Appendix~\ref{appendix:estimator}.

\section{Optimal Convergence}
\label{section:optimalconvergence}

In this section, we prove linear convergence as well as optimal algebraic convergence rates for the sequence of \textsl{a~posteriori} error estimators, generated by Algorithm~\ref{algorithm}. 
According to Section~\ref{section:algorithm}, the error estimator as well 
as the mesh-refinement strategy satisfy all assumptions needed in order to apply the abstract framework from~\cite[Section~2]{helmholtz}. 

\subsection{Approximation class}
\label{subsection:approximationclass}
For $N \in \N$, we define the set of all refinements which have at most $N$ elements more than a given mesh $\TT$, i.e.,

\begin{align*}
	\T_N(\TT) :=& \big\{ \TT_\coarse \in \refine(\TT): \# \TT_\coarse - \# \TT\leq N \\
	& \qquad  \text{ and the unique solution } \Phi_\coarse \in \PP^p(\TT_\coarse) \text{ to~\eqref{eq:discreteform:kompakt} exists} \big\}.
\end{align*}
If $\T_N(\TT) = \emptyset$ for some $N \in \N$, then we set $\min_{\TT_\coarse \in \T_N(\TT)} \eta_\coarse =0$.
 For any $s>0$, we define the abstract approximation class
\begin{align}\label{eq:approxclass}
	\norm{\phi}{\Approx_s(\TT)} := \sup_{N \in \N_0} \big( (N+1)^s \min_{\TT_\coarse \in \T_N(\TT)}  \eta_\coarse \big),
\end{align}
where $\eta_\coarse$ denotes the estimator corresponding to the optimal mesh $\TT_\coarse \in \T_N$.
To abbreviate notation, we define $\T_N := \T_N(\TT_0)$ and $\norm{\phi}{\Approx_s}:= \norm{\phi}{\Approx_s(\TT_0)}$.
 That means that, if $\norm{\phi}{\Approx_s} < \infty$ for some $s>0$, 
then there exists a sequence of optimal meshes such that the corresponding 
estimator sequence decays at least like $\eta_\coarse = \OO\big( (\# \TT_\coarse)^{-s} \big)$.
 Note that, in general, the sequence of corresponding optimal spaces $\PP^p(\TT_\coarse)$ is not necessarily nested. 


\subsection{Optimal convergence rates}
\label{subsection:optiomalrates}
The next theorem is the main result of this work. It proves that Algorithm~\ref{algorithm} does not only lead to convergence as well as linear convergence of the sequence of solutions, 
but also guarantees optimal algebraic convergence rates for the sequence of \textsl{a posteriori} error estimators.

\begin{theorem}\label{theorem:optimal}
	Let $(\TT_\ell)_{\ell \in \N_0}$ and $(\eta_\ell)_{\ell \in \N_0}$ be the sequences of meshes and corresponding estimators produced by Algorithm~\ref{algorithm}. 
		Let $0 < \theta \leq 1$. Then, there exist constants $0 < \qlin < 1$ and $\Clin >0$  as well as $\llin \in \N_0$ 
		such that Algorithm~\ref{algorithm} guarantees that
		\begin{align}\label{eq:linearconvergence}
			\eta_{\ell + n} \leq \Clin \, \qlin^n \, \eta_\ell \quad \text{for all } \ell,n \in \N \text{ with } \ell \geq \llin .
		\end{align}
	In particular, this implies convergence 
	\begin{align}\label{eq:convergence}
		\lim_{\ell \to \infty} \norm{\phi - \Phi_\ell}{\H^{-1/2}(\Gamma)}= \lim_{\ell \to \infty} \eta_\ell = 0.
	\end{align} 
	Moreover, there exist $\lcea \in \N_{0}$ and $C_\ell \geq 1$ with $\lim_{\ell \to \infty} C_\ell = 1$ such that the 
	sequence of discrete solutions $\Phi_\ell \in \PP^p(\TT_\ell)$ satisfies that
	\begin{align}\label{eq:cea}
		\enorm{\phi - \Phi_\ell} \leq C_\ell \min_{\Psi_\ell \in \PP^p(\TT_\ell)} \enorm{\phi - \Psi_\ell}
		\quad \text{for all } \ell \geq \lcea. 
	\end{align}
	Moreover, there exists $\widehat{\beta}>0$, $\lopt>0$, as well as $\thetaopt := (1+\Cstab^2 \Crel^2 / \widehat{\beta})^{-1}$, such that for all $0< \theta < \thetaopt$ and all $s>0$, it holds that
	\begin{align}\label{eq:optimal}
		\norm{\phi}{\Approx_s} < \infty \quad \Longleftrightarrow \quad \exists \Copt>0 \,\, \forall \ell \geq \lopt \quad 
		\eta_\ell \leq \Copt\, (\# \TT_\ell - \# \TT_0 +1 )^{-s}.
	\end{align}
	The constant $\Copt$ depends only on $\lopt$, $\Cson$, $\TT_0$, $\theta$, $s$, and on the constants in Lemma~\ref{lemma:axioms}.
	
\end{theorem}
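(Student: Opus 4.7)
The plan is to cast Algorithm~\ref{algorithm} into the abstract framework for compactly perturbed elliptic problems from \cite{helmholtz} and to verify its hypotheses. The ingredients are already in place: Proposition~\ref{prop:ss} gives a uniform discrete inf--sup constant once the ansatz space is rich enough, Lemma~\ref{lemma:axioms} supplies the four axioms of adaptivity (stability, reduction, discrete reliability, reliability), and Section~\ref{section:mesh} provides the mesh-refinement properties (closure~\eqref{mesh:mesh-closure}, overlay~\eqref{mesh:overlay}, and local mesh-size reduction via Lemma~\ref{mesh:lemma:meshsize}). The expanded D\"orfler marking in Step~{\rm(v)} of Algorithm~\ref{algorithm} is the glue that makes these pieces fit together in the indefinite setting.

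The first step is to show that the uniform-refinement branch Step~{\rm(i)} is entered only finitely often. Every iteration either uniformly refines $\TT_\ell$ or marks, through Step~{\rm(v)}, the elements of maximal mesh-size, so Lemma~\ref{mesh:lemma:meshsize} forces $\norm{h_\ell}{L^\infty(\Gamma)}\to 0$ and hence density of $\bigcup_\ell\PP^p(\TT_\ell)$ in $\H^{-1/2}(\Gamma)$. Proposition~\ref{prop:ss} then yields some $\llin\in\N_0$ after which every $\PP^p(\TT_\ell)$ contains the threshold subspace $\XX_{\ell_\coarse}$, so~\eqref{eq:discreteform:kompakt} is uniquely solvable with a uniform discrete inf--sup bound $\widehat\beta>0$. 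In particular, for $\ell\ge\llin$ Algorithm~\ref{algorithm} reduces to the standard adaptive loop, and the quasi-optimality~\eqref{eq:cea} with $C_\ell\to 1$ follows from~\eqref{eq:ssprop:cea} together with the fact that $\beta_\ell$ converges to the continuous inf--sup constant as the spaces grow.

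For the linear convergence statement~\eqref{eq:linearconvergence} I would follow the contraction argument of \cite{ckns,axioms}, adapted to the compactly perturbed setting as in \cite[Section~4]{helmholtz}. For $\ell\ge\llin$, stability~\eqref{eq:stability} and reduction~\eqref{eq:reduction} combined with D\"orfler selection in Step~{\rm(iv)} give an estimator reduction of the form $\eta_{\ell+1}^2\le \qred\,\eta_\ell^2 + C\,\norm{\Phi_{\ell+1}-\Phi_\ell}{\H^{-1/2}(\Gamma)}^2$. Because standard Galerkin orthogonality is unavailable for the indefinite form $b(\cdot,\cdot)$, one has to replace it by the generalized quasi-orthogonality for compactly perturbed coercive forms proved in \cite{helmholtz}; this yields contraction of a combined quantity $\Delta_\ell = \eta_\ell^2 + C\,\norm{\phi-\Phi_\ell}{\H^{-1/2}(\Gamma)}^2$, from which~\eqref{eq:linearconvergence} follows by the Lemma~\ref{lemma:axioms}(iv) reliability bound. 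The estimator convergence in~\eqref{eq:convergence} is then immediate, and convergence of the Galerkin error follows from reliability again.

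Finally, for the optimality statement~\eqref{eq:optimal} I would run the standard comparison argument of \cite{ckns,axioms,ffkmp:part1}. Given $s>0$ with $\norm{\phi}{\Approx_s}<\infty$ and an iteration $\ell\ge\lopt$, choose an optimal mesh $\TT_\coarse^\star\in\T_N$ with $N$ proportional to $\eta_\ell^{-1/s}$; the overlay $\TT_\ell\oplus\TT_\coarse^\star$ has at most $\#\TT_\ell+N$ elements by~\eqref{mesh:overlay}. Discrete reliability~\eqref{eq:discrete:reliability} and stability~\eqref{eq:stability}, combined with the restriction $\theta<\thetaopt=(1+\Cstab^2\Crel^2/\widehat\beta)^{-1}$, imply that the refined region $\RR_{\ell,\ell\oplus\star}$ satisfies a D\"orfler property with parameter $\theta$, so by the quasi-minimality of $\MM_\ell'$ in Step~{\rm(iv)} one gets $\#\MM_\ell\lesssim N\lesssim\eta_\ell^{-1/s}$. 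Summing this bound via the mesh-closure estimate~\eqref{mesh:mesh-closure} and using the geometric decay of $\eta_\ell$ from~\eqref{eq:linearconvergence} yields the desired rate $\eta_\ell\le\Copt(\#\TT_\ell-\#\TT_0+1)^{-s}$; the converse implication in~\eqref{eq:optimal} is trivial since $\TT_\ell\in\T_{\#\TT_\ell-\#\TT_0}$. The principal obstacle throughout is the compact perturbation $\Kopk$: it defeats the classical Galerkin orthogonality and makes the ``eventually uniform inf--sup'' control and the generalized quasi-orthogonality of \cite{helmholtz} indispensable; the novel inverse-type estimates of Theorem~\ref{theorem:invest} enter precisely here, through the proof of the axioms in Lemma~\ref{lemma:axioms}, which is where the $k$-dependent constants from~\eqref{eq:invest:slo}--\eqref{eq:discrete:invest:dlo} get absorbed.
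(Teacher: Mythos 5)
Your proposal is correct and follows essentially the same route as the paper: embed the problem into the abstract framework of \cite{helmholtz} for compactly perturbed elliptic problems, verify its axioms via Lemma~\ref{lemma:axioms} and Proposition~\ref{prop:ss}, derive linear convergence from the generalized quasi-orthogonality and estimator reduction, and prove optimality via the comparison/overlay argument with the restricted marking parameter $\thetaopt$. The paper's Appendix~B is slightly more explicit about the technical machinery you only gesture at: it records, as a separate lemma (Lemma~\ref{lemma:m:uniform}), that after an $m$-times uniform refinement $\widehat{\TT}_0$ one has a uniform inf--sup bound, estimator quasi-monotonicity, and eventual membership $\TT_\ell\in\refine(\widehat{\TT}_0)$, which are all needed to make the approximation-class comparison argument rigorous (and to fill a gap in the proof of \cite{helmholtz}); you absorb this into one sentence about Proposition~\ref{prop:ss}. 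One small slip: you attribute $\norm{h_\ell}{L^\infty(\Gamma)}\to 0$ to Lemma~\ref{mesh:lemma:meshsize} alone, whereas it is really the expanded D\"orfler marking in Step~(v) (marking the largest elements) combined with the contraction of the local mesh-size that forces this; Lemma~\ref{mesh:lemma:meshsize} by itself applies only to refined elements and says nothing about elements the algorithm never selects.
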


The proof follows ideas of~\cite[Section 4.3]{helmholtz}, where we exploit the estimator properties of Lemma~\ref{lemma:axioms}. 
For the sake of completeness 
and since Theorem~\ref{theorem:optimal} is the main result of the present work, a rigorous proof 
is given in Appendix~\ref{appendix:optimal_proof} and improves~\cite{helmholtz}.


\begin{remark}\label{remark:direct}
For the presentation, we focus on the model problem~\eqref{eq:modelproblem} for some indirect BEM. 
In the case of a direct boundary element approach, the model problem reads
\begin{align}\label{eq:remark:slok}
\slok \phi = (\dlok + \frac{1}{2} {\rm Id})\, g \quad \text{on } \Gamma,
\end{align}
where $g \in H^{1/2}(\Gamma)$ is the given Dirichlet data and $\phi = \partial_n u \in H^{-1/2}(\po)$ is the sought normal derivative of the solution $u \in H^1(\Omega)$
of the (equivalent) boundary value problem 
\begin{align*}
	- \Delta u - k^2 u = 0 \quad \text{in } \Omega \qquad \text{subject to} \qquad  u=g \quad \text{on } \Gamma.
\end{align*}
The implementation of the right-hand side requires to approximate $g \approx G_\coarse \in \SS^{p+1}(\TT_\coarse)$.
Suitable approximations $G_\coarse = I_\coarse g$ together with some local data oscillations which control the 
approximation error $\norm{g-G_\coarse}{H^{1/2}(\po)}$,
are discussed and analyzed for the Laplace problem in~\cite{ffkmp:part1}. Provided that $g \in H^1(\po)$, it is shown that the adaptive algorithm then still leads to optimal 
convergence behavior. Together with the present analysis, the results of~\cite{ffkmp:part1} transfer immediately to the direct boundary element approach~\eqref{eq:remark:slok}.
\end{remark}

\section{Hypersingular integral equation}
\label{section:hypsing}
In this section, we briefly comment on the extension of our analysis to
the hypersingular integral equation. 
In case of the Laplace equation ($k=0$), a proof of optimal algebraic convergence rates is found in~\cite{ffkmp:part2,gantumur}. 
Throughout this section, we additionally suppose that $\partial\Omega$ is connected.
The model problem reads as follows: 
Given $f \in H^{-1/2}_\star(\po) := \set{\phi \in H^{-1/2}(\po)}{\dual{\phi}{1}=0}$ and the hypersingualar operator $\hsok:= - \condint \dlpk$,
find $u \in H^{1/2}_\star(\po) := \set{v \in H^{1/2}(\po)}{\dual{1}{v}=0}$ such that
\begin{align}\label{eq:modelproblem:hypsing}
	\hsok u = f \quad \text{on } \po.
\end{align}
The proof of convergence as well as optimal convergence rates for the related ABEM follows 
similar to the one for the weakly singular integral equation. Therefore, we focus only on the differences and highlight the necessary modifications.
For the Laplace case $k=0$, we also refer to~\cite{ffkmp:part2}. 

\subsection{Framework}
The operator $\hsonull$ is symmetric and positive semi-definite on $H^{1/2}(\po)$: 
\begin{align*}
	\dual{\hsonull \, v}{w} = \dual{\hsonull \, w}{v}  \quad \text{and } \quad \dual{\hsonull \, v}{v}\geq 0 \quad \text{for all } v,w \in H^{1/2}(\po).
\end{align*}
Since $\partial\Omega$ is connected, the
kernel of $\hsonull$ are the constant functions, and the bilinear form
$\dual{\hsonull (\cdot)}{\cdot}$ provides a scalar product on $H_\star^{1/2}(\po)$. 
Hence, this can be expanded to
\begin{align*}
	\bilina{u}{v} := \dual{\hsonull \, v}{w} + \dual{1}{v} \dual{1}{w} \quad \text{for all } v,w \in H^{1/2}(\po),
\end{align*}
which is a scalar product on $H^{1/2}(\po)$. According to the Rellich compactness theorem, there holds the norm equivalence
$\enorm{v}^2:=  \bilina{v}{v} \simeq \norm{v}{H^{1/2}(\po)}^2$ for all $v \in H^{1/2}(\po)$.

For $k\neq0$, it is well-known that the hypersingular integral operator $W_k$ is invertible, if and only if $k^2$ is not an eigenvalue of the interior Neumann problem (see \cite[Proposition 2.5]{st_hypsing}), i.e., it holds that
\begin{align}\tag{\rm INP}\label{eq:inp}
    \forall u \in H^1(\Omega)\quad\Big(
	\Delta u = k^2u \text{ with } \condint u = 0 \text{ and} \int_\Gamma u\,dx=0 \quad \Longrightarrow \quad  u=0 \text{ in } \Omega
	\Big);
\end{align}
To ensure solvability, we assume throughout that $k^2$ satisfies~\eqref{eq:inp}.
On Lipschitz boundaries $\po$, the operator $\widetilde{\Kop}_{W_k} := \hsok - \hsonull: H^{1/2}(\po) \to H^{-1/2}(\po)$ is compact; see~\cite[Lemma 3.9.8]{sauterschwab}. Define $\dual{ \Kop_{W_k} \, v }{w}  := \dual{  \widetilde{\Kop}_{W_k} \, v }{w} -  \dual{1}{v} \dual{1}{w}$ for all $v,w \in H^{1/2}(\po)$. 
Note that, $\Kop_{W_k}: H^{1/2}(\po) \to H^{-1/2}(\po)$ is uniquely defined and compact. 
Reformulation of~\eqref{eq:modelproblem:hypsing} yields the following equivalent formulation: Given $f \in H^{-1/2}_\star(\po)$, find $u \in H^{1/2}(\po)$ such that 
\begin{align}\label{eq:weakformulation:kompakt:hypsing}
 \bilina{u}{v} + \dual{ \Kop_{W_k} \, u }{v}  = \dual{f}{v} \quad \text{ for all } v \in H^{1/2}(\po).
\end{align}
The corresponding discrete formulation of~\eqref{eq:weakformulation:kompakt:hypsing} reads as: Find $U_\coarse \in \SS^p(\TT_\coarse)$ such that
\begin{align}\label{eq:discreteformulation:kompakt:hypsing}
 \bilina{U_\coarse}{V_\coarse} + \dual{ \Kop_{W_k} \, U_\coarse }{V_\coarse} 
 =  \dual{f}{V_\coarse} \quad \text{ for all } V_\coarse \in \SS^{p}(\TT_\coarse).
\end{align}
Then, the weak formulation~\eqref{eq:weakformulation:kompakt:hypsing} together with its Galerkin formulation~\eqref{eq:discreteformulation:kompakt:hypsing} fits in the abstract framework of Proposition~\ref{prop:ss}. 
Hence, existence and uniqueness of solutions of~\eqref{eq:discreteformulation:kompakt:hypsing} are guaranteed in the sense of Proposition~\ref{prop:ss}. 

\subsection{Error estimator}
Analogously to $H^{\pm 1/2}_\star(\po)$, we define $L^2_\star(\po) := \set{\phi \in L^2(\po)}{\int_\Gamma \phi \, ds=0}$.
Let $\TT_\coarse \in \T := \refine(\TT_0)$ be a triangulation such that the corresponding discrete solution $U_\coarse \in \SS^{p}(\TT_\coarse)$ of~\eqref{eq:discreteformulation:kompakt:hypsing}
exists. Suppose that $f \in L^2_\star(\po)$. Then, the local contributions of the weighted-residual error estimator for the hypersingular integral equation are defined by
\begin{align}\label{eq:est:hypsing}
	\eta_\coarse(T) := \norm{h_\coarse^{1/2} (f -\hsok \, U_\coarse)}{L^2(T)} \quad \text{for all } T \in \TT_\coarse.
\end{align}
The proofs of the estimator properties in Lemma~\ref{lemma:axioms} (i.e., stability on non-refined domains, reduction on refined element domains,
 discrete reliability as well as reliability)
 are similar to the ones for the weakly-singular case and can be found in~\cite[Proposition 3.5]{ffkmp:part2}.
The main difference is the use of the inverse inequality~\eqref{eq:invest:hso} instead of~\eqref{eq:invest:slo}.

\subsection{Optimal convergence rates}
We apply Algorithm~\ref{algorithm} to the model problem~\eqref{eq:weakformulation:kompakt:hypsing}. 
Recall that the error estimator~\eqref{eq:est:hypsing} satisfies Lemma~\ref{lemma:axioms} and model problem~\eqref{eq:weakformulation:kompakt:hypsing}
fits in the abstract setting of~\cite{helmholtz}.  
Verbatim argumentation as for the weakly-singular case proves Theorem~\ref{theorem:optimal} for the hypersingular integral equation. For details, see~\cite{haberlphd}.
Theorem~\ref{theorem:optimal} guarantees linear convergence and optimal algebraic convergence rates for the estimator sequence generated by Algorithm~\ref{algorithm} for the hypersingular equation~\eqref{eq:modelproblem:hypsing}.

Similarly to Remark~\ref{remark:direct}, one may consider a direct formulation for the Neumann boundary-value problem. In this case, the model problem reads as follows: Given Neumann data $\phi \in H^{-1/2}(\po)$, find $u \in H^{1/2}(\po)$ such that
	\begin{align}\label{eq:hypsing:direct}
		\hsok u = (\frac{1}{2} {\rm Id} - \adlok)\, \phi \quad \text{on } \po.
	\end{align}
In practice, the implementation of the right-hand side requires to approximate $\phi \approx \Phi_\coarse \in \PP^{p-1}(\TT_\coarse)$. Provided that $\phi \in L^2(\po)$, a suitable approximation $\Phi_\coarse := \Pi_\coarse \phi$ is given by the $L^2$-orthogonal projection onto $\PP^{p-1}(\TT_\coarse)$. The local data oscillations which control the additional approximation error $\norm{\phi - \Phi_\coarse}{H^{-1/2}(\Gamma)}$ are discussed and analyzed in~\cite{ffkmp:part2} for the Laplace problem. There, it is shown that the adaptive algorithm then still leads to optimal convergence behavior. Together with the present analysis, the results of~\cite{ffkmp:part2} transfer immediately to the direct boundary element aproach~\eqref{eq:hypsing:direct}.\qed


\section{Numerical Experiments}
\label{section:numerics}

In this section, we present some numerical experiments for the 3D Helmholtz equation that underpin the theoretical findings
of this work. 
We use lowest-order BEM and consider $\XX_\coarse = \PP^0(\TT_\coarse)$ for the weakly-singular integral equation 
and $\XX_\coarse = \SS^1(\TT_\coarse)$ for the hypersingular equation. The numerical computations were done with help
of BEM++, which is an open-source Galerkin boundary element library. 

We consider sound-soft (exterior Dirichlet) and sound-hard (exterior Neumann) acoustic scattering problems 
in $\R^3 \setminus \Omega$, where $\Omega \subset \R^3$ denotes the scatterer. 
Let $a \in \R^3$ with $|a| = 1$ denote the directional vector of the incident wave. 
Then, the incident (plane-) wave is given by $u^{\operatorname{inc}} = \exp(ika \cdot x)$. 
Let $u^{\operatorname{scat}}$ be the scattered field and the resulting total field is defined by $u^{\operatorname{tot}} =  u^{\operatorname{inc}} + u^{\operatorname{scat}}$.

For the sake of simplicity, we restrict the numerical examples to an indirect approach, in which the solution is in the form 
of a layer potential with some unknown density. 
For the sound-soft scattering problem, we obtain: 
Find $u^{\operatorname{scat}} = \slpk (\phi)$ such that
\begin{align}\label{eq:soundsoft}
	\slok \, \phi &= g \quad \text{subject to } \quad 
	g = - u^{\operatorname{inc}} \quad \text{on } \po.
\end{align}
The indirect approach for the sound-hard reads:
Find $u^{\operatorname{scat}} = \dlpk (\phi)$ such that
\begin{align}\label{eq:soundhard}
	\hsok \, \phi &= g \quad \text{subject to } \quad 
	g =  - \partial_n u^{\operatorname{inc}} \quad \text{on } \po.
\end{align}

\subsection{Implementational issues}
\label{subsectoin:implementation}
In this subsection, we briefly comment on some of the 
challenges arising in the implementation of Algorithm~\ref{algorithm}. 
We use BEM++ to compute all involved potential and integral operators;
see~\cite{bempp,bempp2,bempp3} as well as {\tt https://bempp.com} for details on BEM++ and, in particular, on the implemented quadrature rules, 
which are based on~\cite{sauterschwab}. The arising discrete linear systems
\begin{align*}
	\dual{\slok \Phi_\coarse}{\Psi_\coarse} = \dual{f}{\Psi_\coarse} \quad \text{resp.} \quad \dual{\hsok \Phi_\coarse}{\Psi_\coarse} = \dual{f}{\Psi_\coarse}
\end{align*}
are solved by GMRES.
Preconditioning can be done by diagonal or multilevel additive Schwarz preconditioners;
see, e.g.,~\cite{MR2257114,abem+solve} for BEM for 
the Laplace problem. 
To reduce the cost for storage of the system matrices, BEM++ supports $\HH$-matrices.

We emphasize that in the numerical experiments the refinement indicators
$\eta_\coarse(T) = \norm{h_\coarse^{1/2} \, \nabla_{\Gamma} \, (\slok \Phi_\coarse - f)}{L^2(T)}$ cannot be computed exactly. 
Instead, on each mesh $\TT_\coarse$ we compute the indicators by constructing a discrete integral operator 
of higher order $\slok^{\PP^0 \to \PP^1}: \PP^0(\TT_\coarse) \to \PP^1(\TT_\coarse)$ and by approximating the 
residual by $\slok^{\PP^0 \to \PP^1} \Phi_\coarse - f_\coarse \in  \PP^1(\TT_\coarse)$, where $f_\coarse \in \PP^1(\TT_\coarse)$
denotes the $\TT_\coarse$-piecewise $L^2(\Gamma)$ best approximation. Then, we employ 
\begin{align*}
	\norm{h_\coarse^{1/2} \, \nabla_{\Gamma} \, (\slok^{\PP^0 \to \PP^1}\Phi_\coarse - f_\coarse)}{L^2(T)} \approx \eta_\coarse(T) \quad \text{for all } T \in \TT_\bullet,
\end{align*}
where the left-hand side is computed exactly by numerical quadrature.
Our implementation is restricted to lowest-order elements $\PP^0(\TT_\coarse)$ (and discontinuous piecewise affine elements $\PP^1(\TT_\coarse)$ for the computation of the residual). Due to generic edge singularities, however, the use of higher-order polynomials would not lead to an improved order of convergence, without using anistropic mesh-refinement.

The hypersingular integral equation is treated similarly.

\subsection{Sound-soft scattering on a L-shaped domain (non-convex case)}
%
As first numerical example, we consider a so called L-shaped domain in $(x,y)$-direction and expand it on the $z$-axis up to $[-1,1]$ (Figure~\ref{fig:ex1:inital_mesh}). 
We compare two directions of the incident wave. One with $a = (-1/\sqrt{2},1/\sqrt{2},0)^T$ (Figure~\ref{fig:ex1:setting}, left) hitting the scatterer on the non-convex part vs.\ 
$a = (1/\sqrt{2},-1/\sqrt{2},0)^T$ hitting the convex part of $\Omega$ (Figure~\ref{fig:ex1:setting}, right).
\vspace{-2mm}
\subsubsection{Non-convex case}
\label{ex:one:non}
First, we comment on the non-convex case. 
Figure~\ref{fig:ex1:compare_marking} (left) shows the convergence rate of $\eta_\ell^2$ for $k=1$ and different marking strategies. 
We compare uniform refinement to standard D\"orfler marking (i.e., with Algorithm~\ref{algorithm}
without Step~(v)) as well as to the expanded D\"orfler marking (Algorithm~\ref{algorithm}), both using $\theta = 0.4$.
The experiments show that uniform mesh-refinement leads to a suboptimal rate  
$\eta_\ell^2 = \OO(N^{-2/3})$, while 
adaptive refinement with Algorithm~\ref{algorithm} leads to the improved rate 
$\OO(N^{-\delta})$ with $\delta = 1.075$, independently of the actual marking. 
Empirically, the results generated by employing the standard D\"orfler
marking are of no difference compared to the results generated by employing the expanded D\"orfler marking. 
The same observation is made for all computations (not displayed). 
While the optimal rate for a smooth solution (without edge singularities) is $\OO(N^{-3/2})$, we
	refer to a heuristic argument from~\cite[Section 7.3]{cmps} that, in the presence of edge singularities, isotropic mesh-refinement can only lead to a reduced order of $\OO(N^{-1})$, which is, in fact, observed here. 

Figure~\ref{fig:ex1:compare_marking} (right) compares uniform vs.\ adaptive refinement for fixed $\theta = 0.4$ but various $k \in \{1,2,4,8,16\}$. 
As expected, the preasymptotic phase increases with $k$, but adaptive mesh-refinement asymptotically regains improved convergence rates for every $k$.
For $k=8$ and $k=16$, the last mesh of the preasymptotic phase is marked with a black symbol. 
Table~\ref{table:epwl} displays the number of elements per wavelength, when asymptotic convergence behavior kicks in.  

\begin{table}[h!]
\begin{tabular}{|c||c|c|c|c||c|c|c|c|}

\hline
$k$ & $\# \TT_\coarse$ & $\max$ & $\min $ & el.\ per $\lambda$ & $\# \TT_\coarse$ & $\max$ & $\min$ & el.\  per $\lambda$ \\
\hline\hline
& \multicolumn{4}{|c||}{non-convex case} & \multicolumn{4}{|c|}{convex case}\\
\hline
$16$-uniform & 896 & 0.125 & 0.125&	3.14 & 896 & 0.125 & 0.125 & 3.14\\
\hline
$16$-adaptive & 198 & 0.5  & 0.125 & 1.57  & 442 & 0.5 & 0.0625 & 1.57 \\
\hline
$8$-uniform	& 224 &  0.25  & 0.25 & 3.14 &224 &  0.25 &0.25  & 3.14  \\
\hline
$8$-adaptive & 184 & 0.5 & 0.125 & 1.57& 126 & 0.5 & 0.25& 1.57 \\
\hline
$4,2,1$ & 56 & 0.5  & 0.5  & 3.14 & 56 & 0.5 & 0.5 & 3.14  \\ 
\hline

\end{tabular}

\caption{Ex.~\ref{ex:one:non} and Ex.~\ref{ex:one:conv}. Number of elements per wavelength (el.\ per $\lambda$), on the surface-part hit by the incoming wave, for the last mesh of the preasymptotic phase. The corresponding meshes are marked by black symbols in Figure~\ref{fig:ex1:compare_marking} and Figure~\ref{fig:ex1:compare_pp_marking}. 
 Here $\max$ and $\min$ denote the maximal and minimal diameter in $(x,y)$-direction. 
 For adaptive meshes, the number of elements per $\lambda$ is computed with the maximal diameter.
  For $k \leq 4$ the asymptotic phase starts with $\TT_0$. 
  For $k=16$ the coarsest uniform refinement with $6$ elements per wavelength has $1792$ elements.   
}
\label{table:epwl}
\end{table}

Figure~\ref{fig:ex1:compare_theta} compares the convergence of the estimator for different values of the marking parameter $\theta \in \{0.2,0.4,0.6,0.8\}$ as well as for uniform mesh-refinement.
Again, uniform mesh-refinement leads to a suboptimal rate of convergence for the error estimator, while 
adaptive refinement with Algorithm~\ref{algorithm} regains the improved rate of convergence, independently of the actual choice of the marking parameter. 
Although Theorem~\ref{theorem:optimal} predicts optimal convergence rates only for small marking parameters  $0 < \theta < \thetaopt :=(1+\Cstab^2\Crel^2/ \widehat{\beta})^{-1}$,
 we observe that Algorithm~\ref{algorithm} is stable 
 with respect to $\theta$, where we tested $\theta \in \{0.2,\ldots,0.8 \}$.
In Figure~\ref{fig:ex1:meshes}, one can see some of the obtained adaptive meshes $\TT_\ell$ with $\ell=4,8,12$.  
 Since Algorithm~\ref{algorithm} refines mostly elements with big error indicator, 
	the refinement is essentially focused on the edges hit by the incoming wave, while facets in the shadow essentially remain coarse. 
	If the elements along the edges are sufficiently fine,
	for some adaptive steps the elements in the plain (smooth) surface parts contain the largest estimator
	contributions; see $\TT_{12}$ in Figure~\ref{fig:ex1:meshes} (right). 

Figure~\ref{fig:ex1:compare_cond} illustrates the condition number of the arising linear system in~\eqref{eq:discreteform:kompakt}.
As expected, the condition number grows with progressing mesh-adaptation, but stays bounded 
in the first couple of steps. This indicates that the linear system in the discrete formulation~\eqref{eq:discreteform:kompakt} allows for a unique solution for
every $\ell \in \N$. Hence, Algorithm~\ref{algorithm} never enforced uniform mesh-refinement in Step~{\rm{(i)}}. 
\vspace{-2mm}
\subsubsection{Convex case}
\label{ex:one:conv}
In the second case, the scatterer is hit on the convex part of the domain (Figure~\ref{fig:ex1:setting}, right), 
we compute very similar results as in the non-convex case. 
As shown in Figure~\ref{fig:ex1:compare_pp_marking}, the expanded as well as the standard D\"orfler marking both 
lead to improved rates of $\eta_\ell^2 = \OO(N^{-1.06})$, 
while uniform refinement leads only to $\OO(N^{-2/3})$.
 The rate of convergence is independent of the wavelength $k>0$, but increasing $k$ leads to a longer preasymptotic phase. 
Figure~\ref{fig:ex1:meshes_pp} shows the triangulation $\TT_{16}$. 
Again, the mesh-refinement is focused around the facets and edges hit by the incoming wave and the facets in the shadow essentially remain coarse.

%

%
%

\begin{figure}
  \centering
  \includegraphics[width=0.40\textwidth]{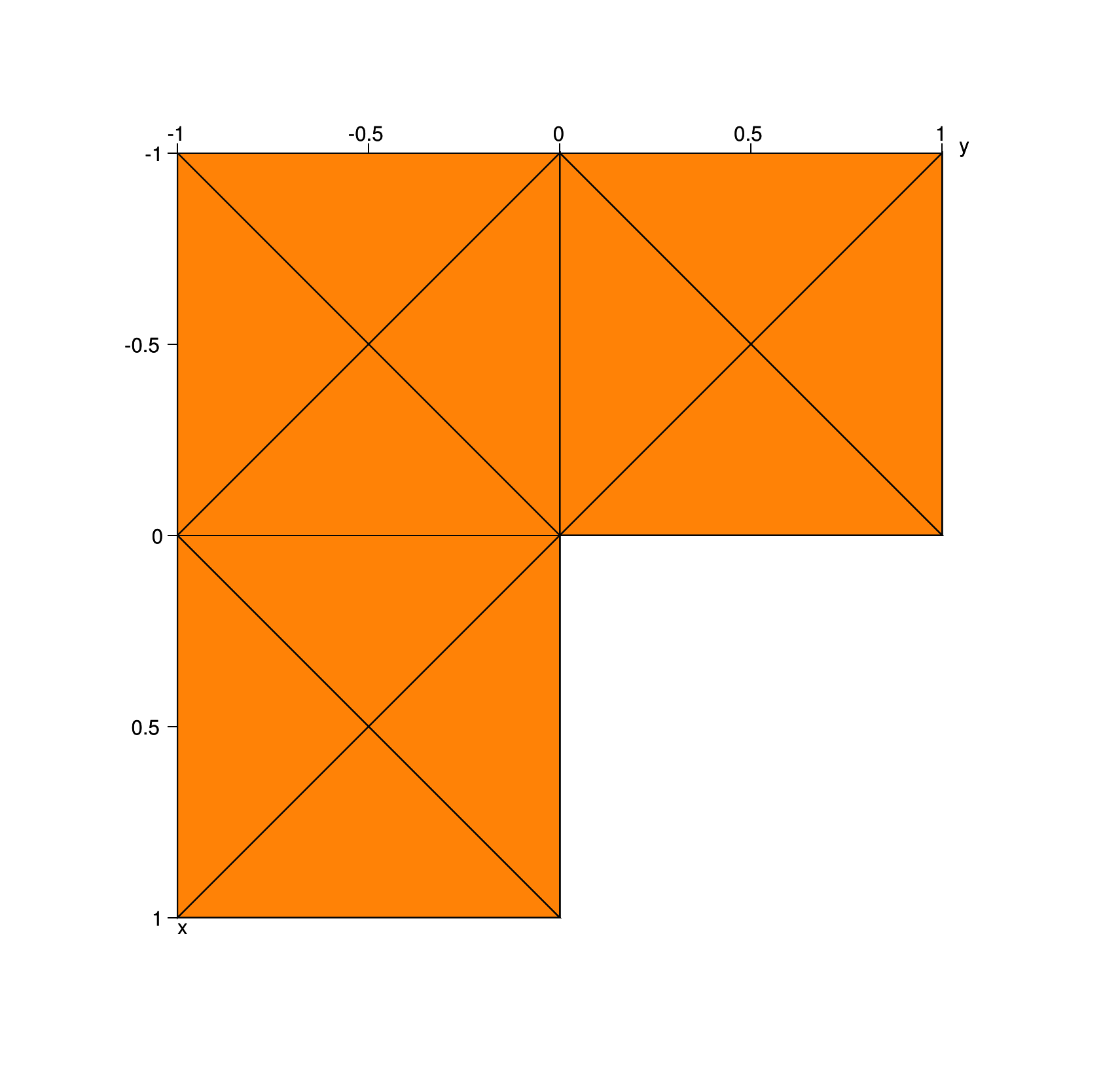}
  \includegraphics[width=0.40\textwidth]{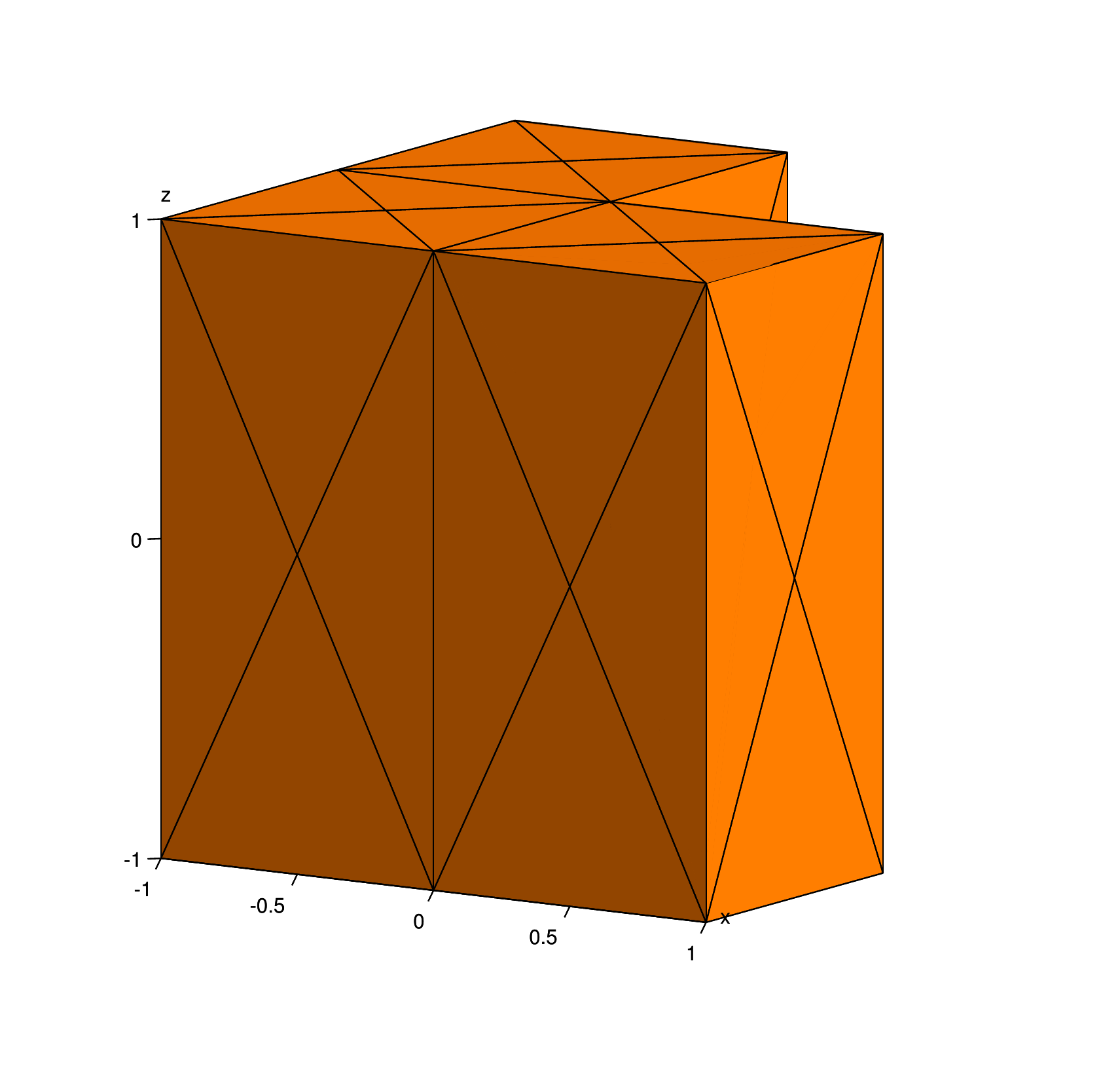}

  \caption{Geometry and initial mesh $\TT_0$ with $56$ elements (left: top view, right: 3D view). The reentrant edge 
   is given by $\{(x,y,z): x=y=0, \, z \in [-1,1]  \} $. 
 }
  \label{fig:ex1:inital_mesh}
\end{figure}

\begin{figure}
  \centering
  \adjincludegraphics[width=0.48\textwidth,Clip={.0\width} {.05\height} {0.0\width} {.125\height}]{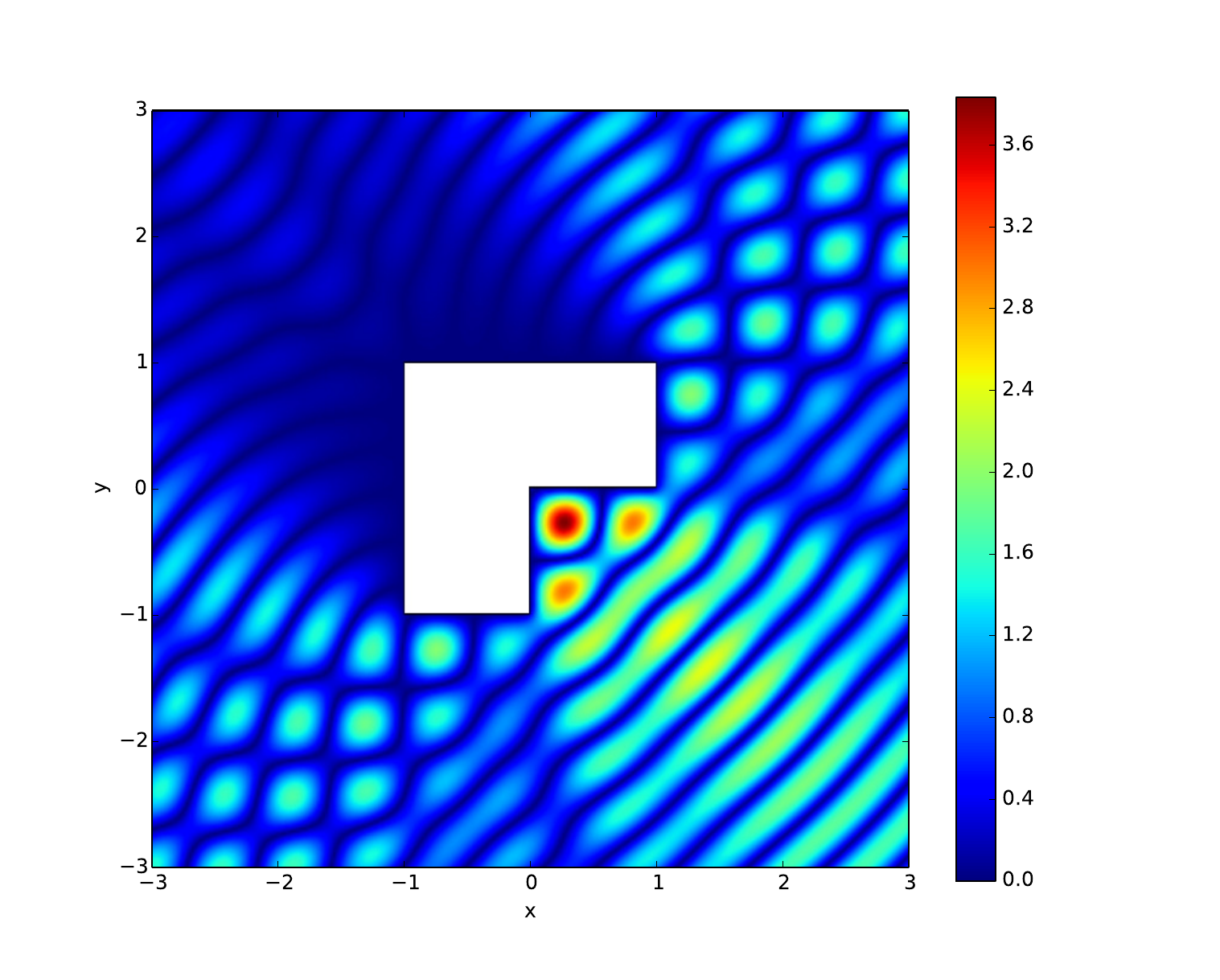}
	\hfill
	\adjincludegraphics[width=0.48\textwidth,Clip={.0\width} {.05\height} {0.0\width} {.125\height}]{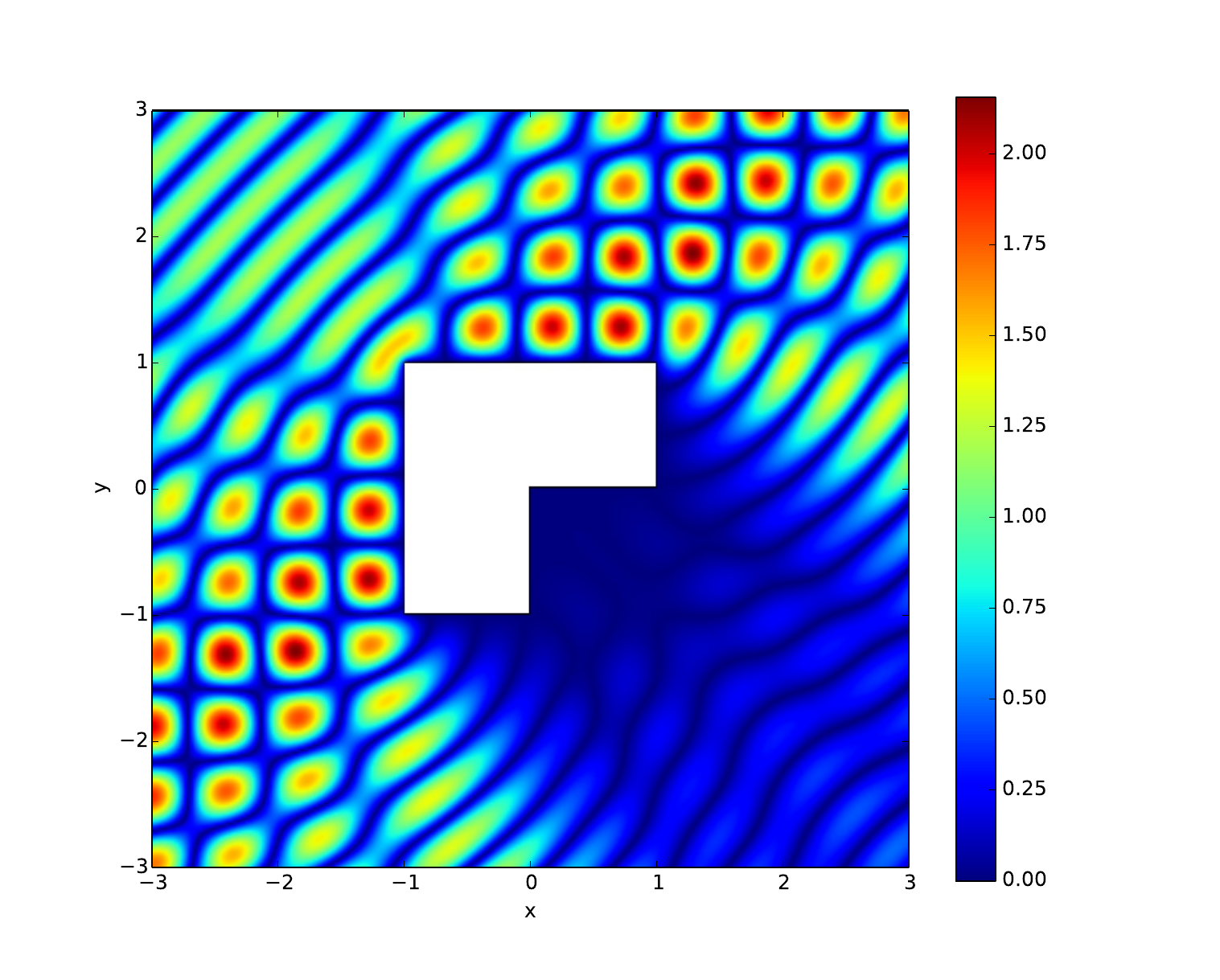}
  \caption{Ex.~\ref{ex:one:non} and Ex.~\ref{ex:one:conv}: Total field $u^{\operatorname{tot}}$ at the plane $z=0$ for different directions of $u^{\operatorname{inc}}$ with $k=8$.
  	The incident wave hits the scatterer on the non-convex part (left) and on the convex part (right).}
  \label{fig:ex1:setting}
\end{figure}

%

\begin{figure}


\begin{tikzpicture}
\begin{loglogaxis}[
 width=0.48\textwidth, height=9.5cm,
legend style={at={(0.02,0.02)},anchor=south west, draw = none },
legend cell align={left},
xlabel={number of elements},
ylabel={estimator},
]
	\addplot+[solid,mark=square,mark size=2pt,mark options={line width=1.0pt},color=red] table
		[x=number_of_elements,y=squared]{figures/Lshape_mm_k1_ada.csv};
	\addlegendentry{D\"orfler}

	\addplot+[solid,mark=triangle*,mark size=2pt,mark options={line width=1.0pt},color=green] table
		[x=number_of_elements,y=squared]{figures/Lshape_em_mm_k1_ada.csv};
	\addlegendentry{expanded D\"orfler}

	\addplot+[solid,mark=*,mark size=2pt,mark options={line width=1.0pt},color=blue] table
		[x=number_of_elements,y=squared]{figures/Lshape_mm_k1_unif.csv};
	\addlegendentry{uniform}

	\addplot [black,dashed ] expression [domain=56:142000, samples = 10] {30*x^(-2/3)} node [midway,above,yshift=+0.40cm] {$\mathcal{O}(N^{-2/3})$};
	
	\addplot [black,dashed ] expression [domain=56:142000, samples = 10] {40*x^(-1.075)} node [midway,below,yshift=-0.25cm, xshift=-0.4cm] {$\mathcal{O}(N^{-\delta})$};

\end{loglogaxis}
\end{tikzpicture}
\hfill
\begin{tikzpicture}
\begin{loglogaxis}[
 width=0.48\textwidth, height=9.5cm,
legend style={at={(0.02,0.02)},anchor=south west, align=left, draw = none },
legend cell align={left},
xlabel={number of elements},
ylabel={estimator},
]

	\addplot+[solid,mark=square,mark size=2pt,mark options={line width=1.0pt},color=red] table
		[x=number_of_elements,y=squared]{figures/Lshape_em_mm_k1_ada.csv};
	\addlegendentry{$k=1$}

	\addplot+[solid,mark=square,mark size=2pt,mark options={line width=1.0pt},color=blue] table
		[x=number_of_elements,y=squared]{figures/Lshape_em_mm_k2_ada.csv};
	\addlegendentry{$k=2$}

	\addplot+[solid,mark=square,mark size=2pt,mark options={line width=1.0pt},color=green] table
		[x=number_of_elements,y=squared]{figures/Lshape_em_mm_k4_ada.csv};
	\addlegendentry{$k=4$}

	\addplot+[solid,mark=square,mark size=2pt,mark options={line width=1.0pt},color=yellow] table
		[x=number_of_elements,y=squared]{figures/Lshape_em_mm_k8_ada.csv};
	\addlegendentry{$k=8$}

	\addplot+[solid,mark=square,mark size=2pt,mark options={line width=1.0pt},color=pink] table
		[x=number_of_elements,y=squared]{figures/Lshape_em_mm_k16_ada.csv};
	\addlegendentry{$k=16$}

	\addplot+[solid,mark=*,mark size=2pt,mark options={line width=1.0pt},color=blue] table
		[x=number_of_elements,y=squared]{figures/Lshape_mm_k2_unif.csv};
	
	\addplot+[solid,mark=*,mark size=2pt,mark options={line width=1.0pt},color=red] table
		[x=number_of_elements,y=squared]{figures/Lshape_mm_k1_unif.csv};

	\addplot+[solid,mark=*,mark size=2pt,mark options={line width=1.0pt},color=green] table
		[x=number_of_elements,y=squared]{figures/Lshape_mm_k4_unif.csv};
	
	\addplot+[solid,mark=*,mark size=2pt,mark options={line width=1.0pt},color=yellow] table
		[x=number_of_elements,y=squared]{figures/Lshape_mm_k8_unif.csv};

	\addplot+[solid,mark=*,mark size=2pt,mark options={line width=1.0pt},color=pink] table
		[x=number_of_elements,y=squared]{figures/Lshape_mm_k16_unif.csv};

	\addplot [black,dashed ] expression [domain=56:142000, samples = 10] {40*x^(-1.075)} node [midway,below,yshift=-1.0cm, xshift=0.8cm] {$\mathcal{O}(N^{-\delta})$};
	
		\addplot [mark=square*,mark size=3pt] coordinates { (208,277) };
			
		\addplot [mark=square*,mark size=3pt] coordinates { (178,46.13) };

		\addplot [mark=*,mark size=3pt] coordinates { (896,156.8916) };
					
		\addplot [mark=*,mark size=3pt] coordinates { (224,52.8074) };

\end{loglogaxis}

\end{tikzpicture}




			



\caption{Ex.~\ref{ex:one:non}: Convergence of $\eta_\ell^2$ for standard D\"orfler marking vs.\ expanded D\"orfler and uniform refinement with $k=1$ (left). 
Expanded D\"orfler marking (squares) vs.\ uniform refinement (circles) for different values of $k>0$ (right). Both plots are computed with $\theta = 0.4$. 
The black symbols mark last meshes of the preasymptotic phase for $k=8$ and $k=16$. 
}
  \label{fig:ex1:compare_marking}

\end{figure}
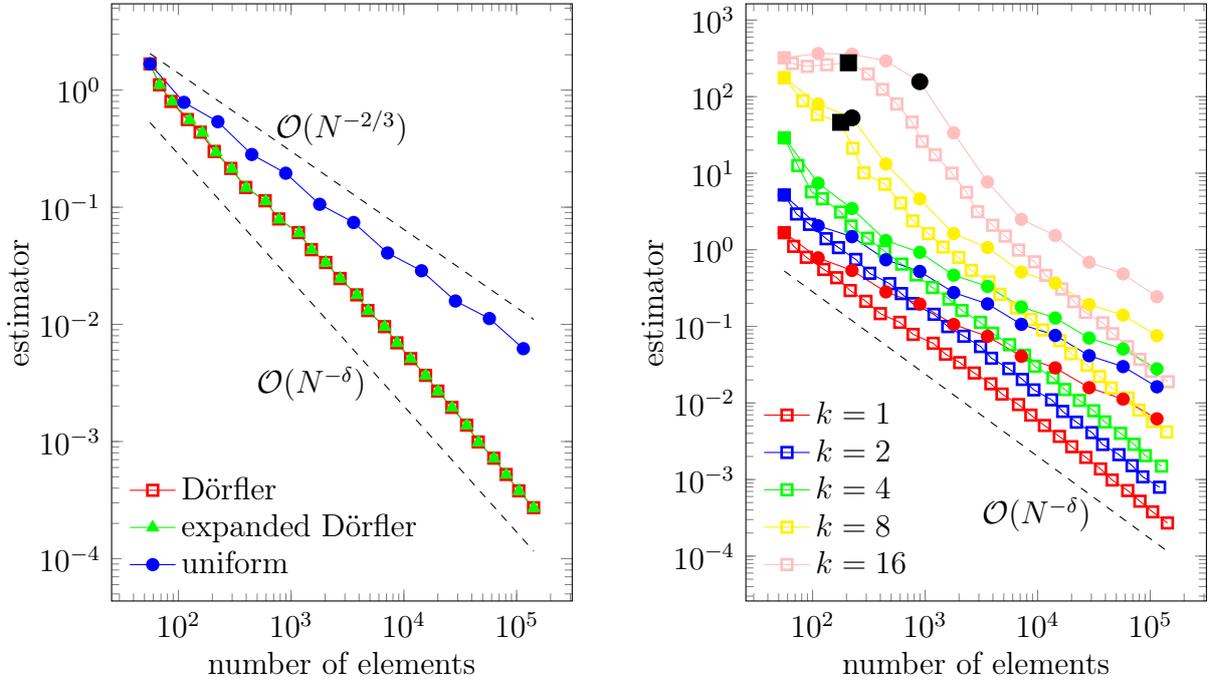

\begin{figure}
	\centering
	\adjincludegraphics[width=0.32\textwidth,Clip={.10\width} {.1\height} {0.0\width} {.05\height}]{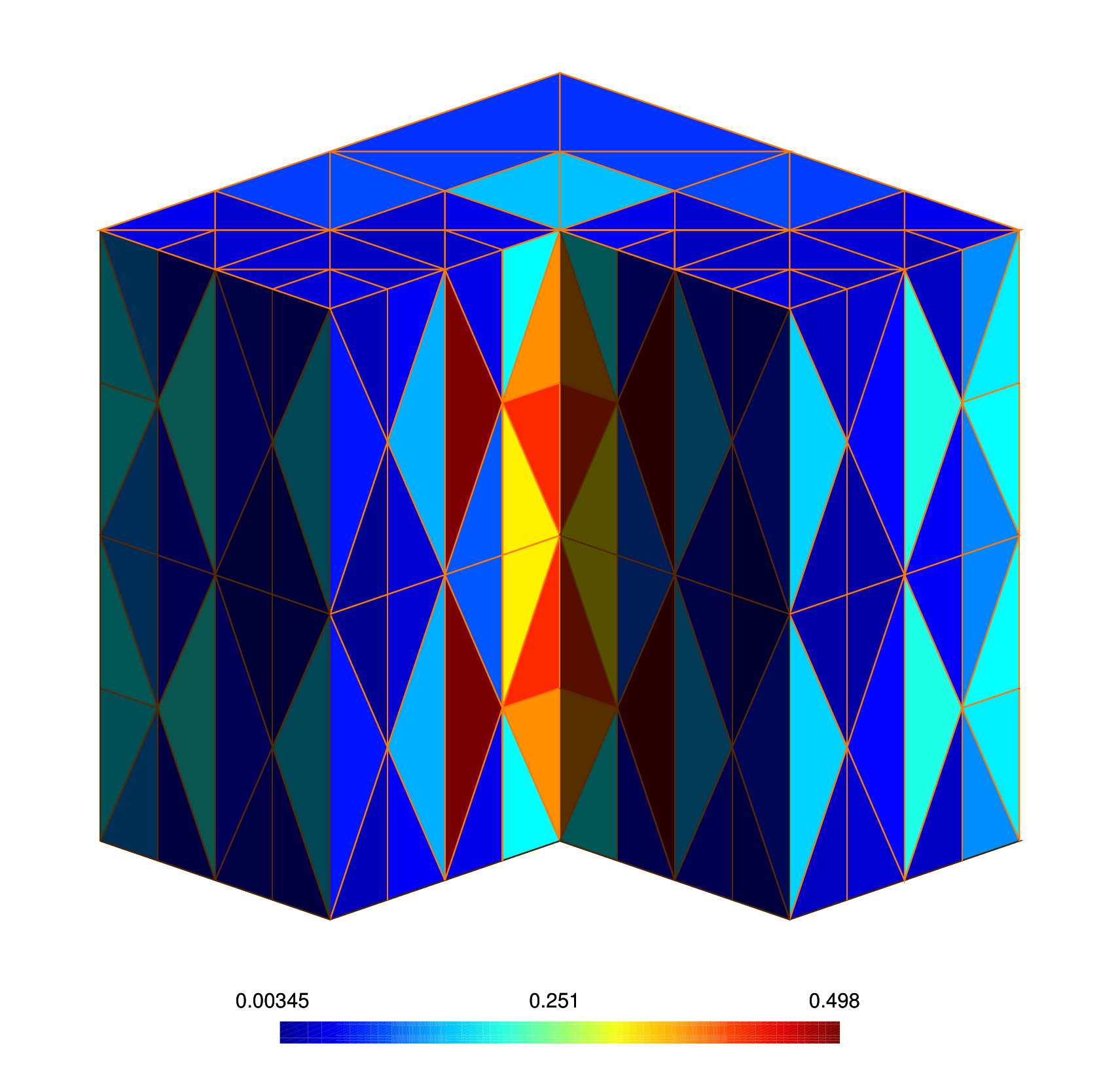}
	\adjincludegraphics[width=0.32\textwidth,Clip={.10\width} {.1\height} {0.0\width} {.05\height}]{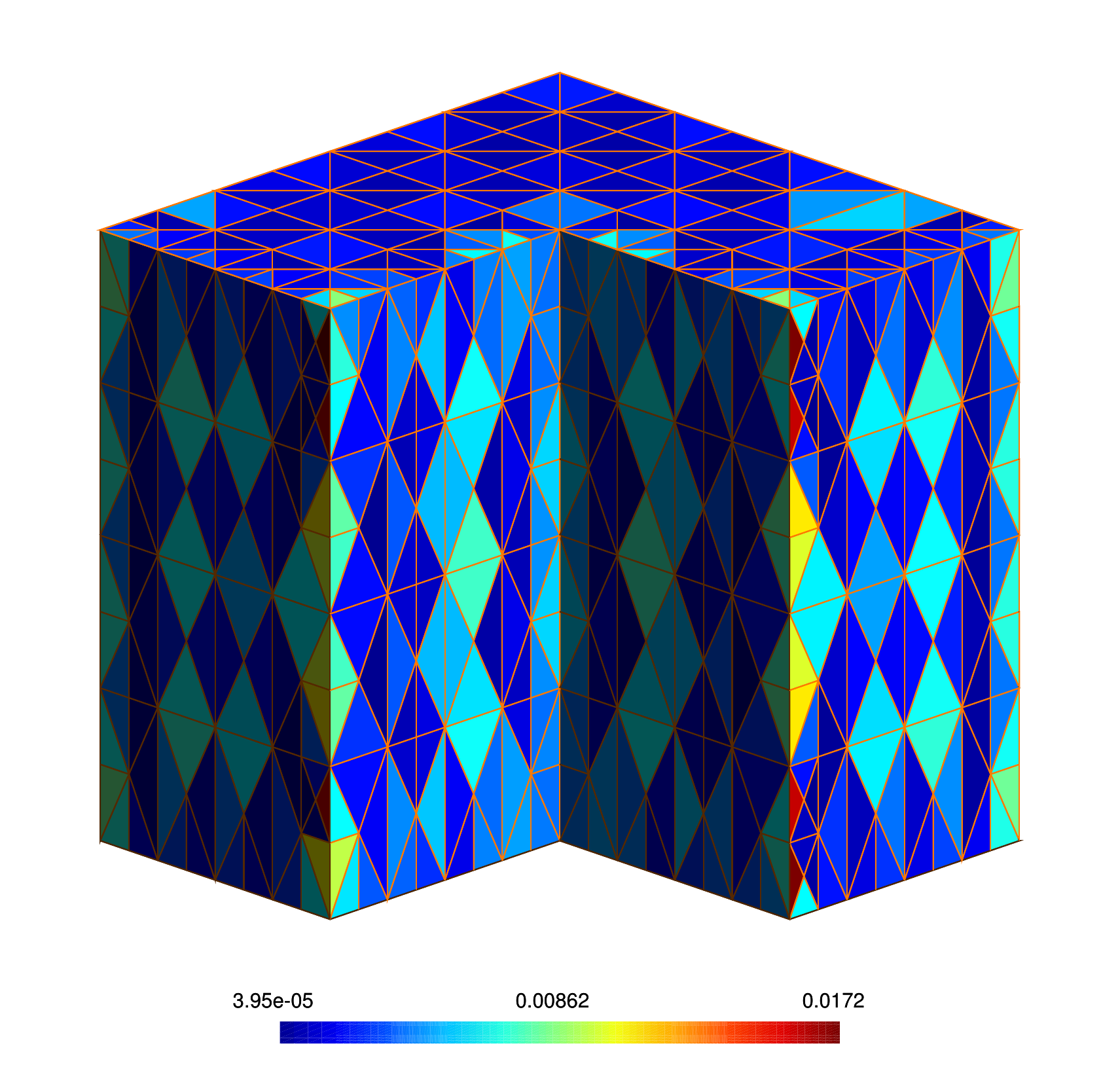}
	\adjincludegraphics[width=0.32\textwidth,Clip={.10\width} {.1\height} {0.0\width} {.05\height}]{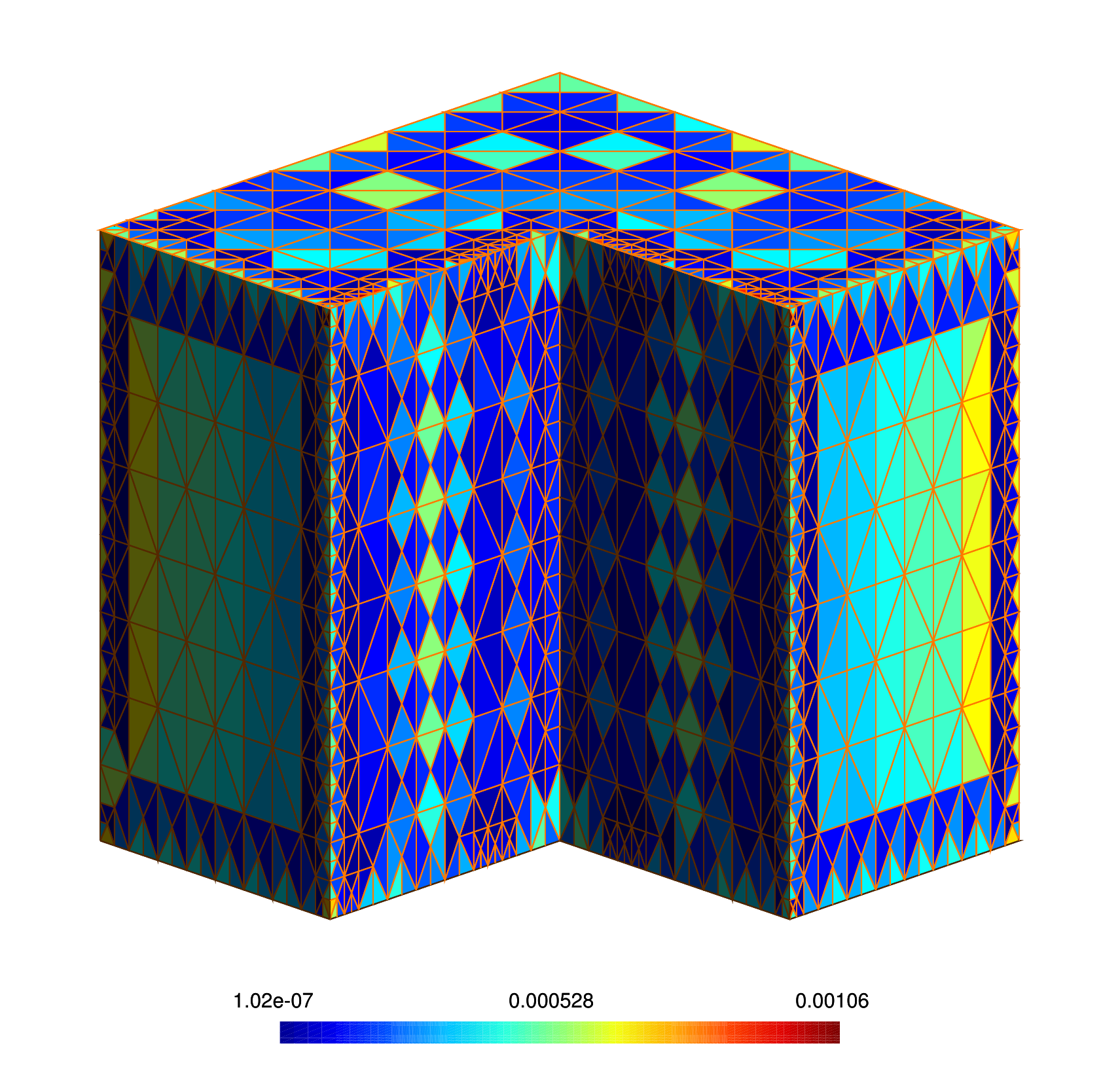}
	\caption{Ex.~\ref{ex:one:non}: Triangulations $\TT_4$, $\TT_8$ and $\TT_{12}$ with $208,766$ and $2332$ elements. 
	The color indicates the element contribution of the error estimator $\eta_\ell(T)^2$ for all $T \in \TT_\ell$.}
	\label{fig:ex1:meshes}	  
\end{figure}

%
%
%
%

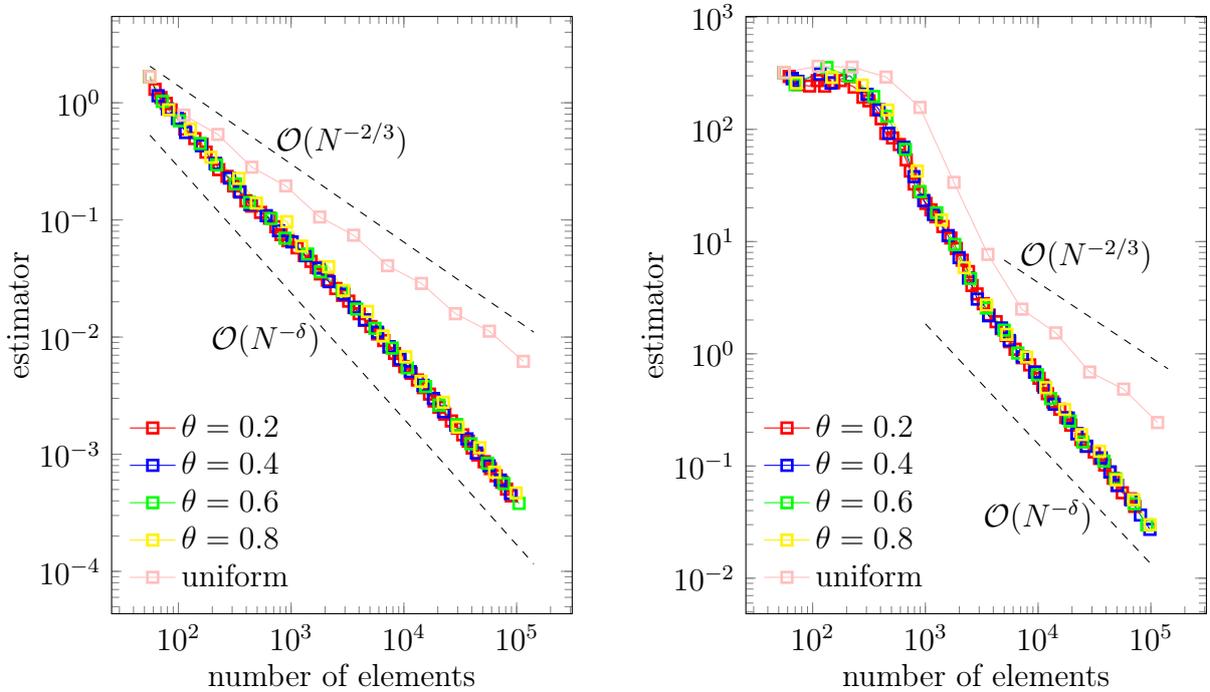
\begin{figure}


\begin{tikzpicture}
\begin{loglogaxis}[
 width=0.48\textwidth, height=9.5cm,
legend style={at={(0.02,0.02)},anchor=south west, align=left, draw = none },
legend cell align={left},
xlabel={number of elements},
ylabel={estimator},
]
	\addplot+[solid,mark=square,mark size=2pt,mark options={line width=1.0pt},color=red] table
		[x=number_of_elements,y=squared]{figures/Lshape_mm_k1_theta_02.csv};
	\addlegendentry{$\theta=0.2$}

	\addplot+[solid,mark=square,mark size=2pt,mark options={line width=1.0pt},color=blue] table
		[x=number_of_elements,y=squared]{figures/Lshape_mm_k1_theta_04.csv};
	\addlegendentry{$\theta=0.4$}

	\addplot+[solid,mark=square,mark size=2pt,mark options={line width=1.0pt},color=green] table
		[x=number_of_elements,y=squared]{figures/Lshape_mm_k1_theta_06.csv};
	\addlegendentry{$\theta=0.6$}

	\addplot+[solid,mark=square,mark size=2pt,mark options={line width=1.0pt},color=yellow] table
		[x=number_of_elements,y=squared]{figures/Lshape_mm_k1_theta_08.csv};
	\addlegendentry{$\theta=0.8$}

	\addplot+[solid,mark=square,mark size=2pt,mark options={line width=1.0pt},color=pink] table
		[x=number_of_elements,y=squared]{figures/Lshape_mm_k1_theta_1.csv};
	\addlegendentry{uniform}

	\addplot [black,dashed ] expression [domain=56:142000, samples = 10] {40*x^(-1.075)} node [midway,below,yshift=0.50cm, xshift=-1cm] {$\mathcal{O}(N^{-\delta})$};
	
	\addplot [black,dashed ] expression [domain=56:142000, samples = 10] {30*x^(-2/3)} node [midway,above,yshift=+0.40cm] {$\mathcal{O}(N^{-2/3})$};

\end{loglogaxis}

\end{tikzpicture}
\hfill
\begin{tikzpicture}
\begin{loglogaxis}[
 width=0.48\textwidth, height=9.5cm,
legend style={at={(0.02,0.02)},anchor=south west, align=left, draw = none },
legend cell align={left},
xlabel={number of elements},
ylabel={estimator},
]
	\addplot+[solid,mark=square,mark size=2pt,mark options={line width=1.0pt},color=red] table
		[x=number_of_elements,y=squared]{figures/Lshape_mm_k16_theta_02.csv};
	\addlegendentry{$\theta=0.2$}

	\addplot+[solid,mark=square,mark size=2pt,mark options={line width=1.0pt},color=blue] table
		[x=number_of_elements,y=squared]{figures/Lshape_mm_k16_theta_04.csv};
	\addlegendentry{$\theta=0.4$}

	\addplot+[solid,mark=square,mark size=2pt,mark options={line width=1.0pt},color=green] table
		[x=number_of_elements,y=squared]{figures/Lshape_mm_k16_theta_06.csv};
	\addlegendentry{$\theta=0.6$}

	\addplot+[solid,mark=square,mark size=2pt,mark options={line width=1.0pt},color=yellow] table
		[x=number_of_elements,y=squared]{figures/Lshape_mm_k16_theta_08.csv};
	\addlegendentry{$\theta=0.8$}

	\addplot+[solid,mark=square,mark size=2pt,mark options={line width=1.0pt},color=pink] table
		[x=number_of_elements,y=squared]{figures/Lshape_mm_k16_theta_1.csv};
	\addlegendentry{uniform}

	\addplot [black,dashed ] expression [domain=1000:100000, samples = 10] {3000*x^(-1.07)} node [midway,below,yshift=-0.60cm] {$\mathcal{O}(N^{-\delta})$};
	
	\addplot [black,dashed ] expression [domain=5000:142000, samples = 10] {2000*x^(-2/3)} node [midway,above,yshift=+0.40cm] {$\mathcal{O}(N^{-2/3})$};

\end{loglogaxis}

\end{tikzpicture}




			



\caption{Ex.~\ref{ex:one:non}: Convergence of $\eta_\ell^2$ for different values of $\theta \in \{0.2,0.4,0.6,0.8\}$ as well as for uniform refinement. 
Both plots use expanded D\"orfler marking with $k=1$ (left) and $k=16$ (right). }
 \label{fig:ex1:compare_theta}

\end{figure}

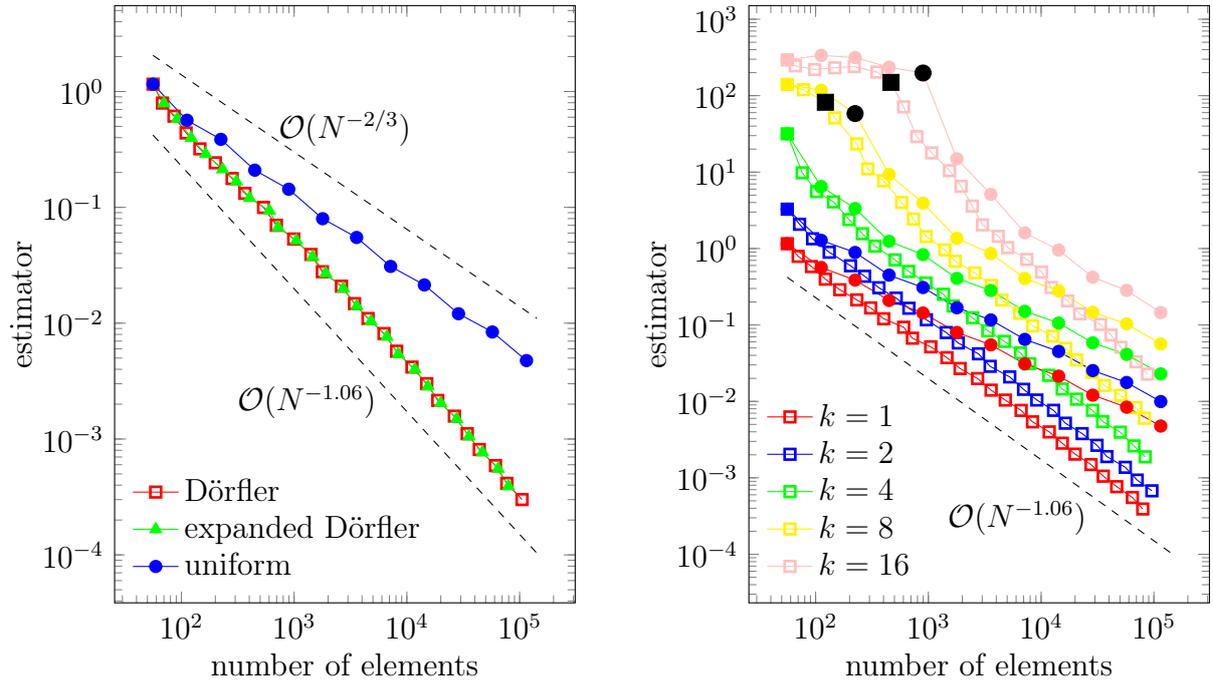
\begin{figure}


\begin{tikzpicture}
\begin{loglogaxis}[
 width=0.48\textwidth, height=9.5cm,
legend style={at={(0.02,0.02)},anchor=south west, align=left, draw = none },
legend cell align={left},
xlabel={number of elements},
ylabel={estimator},
]
	\addplot+[solid,mark=square,mark size=2pt,mark options={line width=1.0pt},color=red] table
		[x=number_of_elements,y=squared]{figures/Lshape_pp_k1_theta04.csv};
	\addlegendentry{D\"orfler}

	\addplot+[solid,mark=triangle*,mark size=2pt,mark options={line width=1.0pt},color=green] table
		[x=number_of_elements,y=squared]{figures/Lshape_pp_k1_expanded_theta04.csv};
	\addlegendentry{expanded D\"orfler}

	\addplot+[solid,mark=*,mark size=2pt,mark options={line width=1.0pt},color=blue] table
		[x=number_of_elements,y=squared]{figures/Lshape_pp_k1_theta1.csv};
	\addlegendentry{uniform}

	\addplot [black,dashed ] expression [domain=56:142000, samples = 10] {30*x^(-2/3)} node [midway,above,yshift=+0.40cm] {$\mathcal{O}(N^{-2/3})$};
	
	\addplot [black,dashed ] expression [domain=56:142000, samples = 10] {30*x^(-1.06)} node [midway,below,yshift=-0.40cm, xshift=-0.5cm] {$\mathcal{O}(N^{-1.06})$};

\end{loglogaxis}
\end{tikzpicture}
\hfill
\begin{tikzpicture}
\begin{loglogaxis}[
 width=0.48\textwidth, height=9.5cm,
legend style={at={(0.02,0.02)},anchor=south west, align=left, draw = none },
legend cell align={left},
xlabel={number of elements},
ylabel={estimator},
]
	\addplot+[solid,mark=square,mark size=2pt,mark options={line width=1.0pt},color=red] table
		[x=number_of_elements,y=squared]{figures/Lshape_pp_k1_expanded_theta04.csv};
	\addlegendentry{$k=1$}

	\addplot+[solid,mark=square,mark size=2pt,mark options={line width=1.0pt},color=blue] table
		[x=number_of_elements,y=squared]{figures/Lshape_pp_k2_expanded_theta04.csv};
	\addlegendentry{$k=2$}

	\addplot+[solid,mark=square,mark size=2pt,mark options={line width=1.0pt},color=green] table
		[x=number_of_elements,y=squared]{figures/Lshape_pp_k4_expanded_theta04.csv};
	\addlegendentry{$k=4$}

	\addplot+[solid,mark=square,mark size=2pt,mark options={line width=1.0pt},color=yellow] table
		[x=number_of_elements,y=squared]{figures/Lshape_pp_k8_expanded_theta04.csv};
	\addlegendentry{$k=8$}

	\addplot+[solid,mark=square,mark size=2pt,mark options={line width=1.0pt},color=pink] table
		[x=number_of_elements,y=squared]{figures/Lshape_pp_k16_expanded_theta04.csv};
	\addlegendentry{$k=16$}

	\addplot+[solid,mark=*,mark size=2pt,mark options={line width=1.0pt},color=red] table
		[x=number_of_elements,y=squared]{figures/Lshape_pp_k1_theta1.csv};
	
	\addplot+[solid,mark=*,mark size=2pt,mark options={line width=1.0pt},color=blue] table
		[x=number_of_elements,y=squared]{figures/Lshape_pp_k2_theta1.csv};

	\addplot+[solid,mark=*,mark size=2pt,mark options={line width=1.0pt},color=green] table
		[x=number_of_elements,y=squared]{figures/Lshape_pp_k4_theta1.csv};
	
	\addplot+[solid,mark=*,mark size=2pt,mark options={line width=1.0pt},color=yellow] table
		[x=number_of_elements,y=squared]{figures/Lshape_pp_k8_theta1.csv};

	\addplot+[solid,mark=*,mark size=2pt,mark options={line width=1.0pt},color=pink] table
		[x=number_of_elements,y=squared]{figures/Lshape_pp_k16_theta1.csv};

	\addplot [black,dashed ] expression [domain=56:142000, samples = 10] {30*x^(-1.06)} node [midway,below,yshift=-1.00cm, xshift=0.50cm] {$\mathcal{O}(N^{-1.06})$};
	
			\addplot [mark=square*,mark size=3pt] coordinates { (466,149.200) };
				
			\addplot [mark=square*,mark size=3pt] coordinates { (122,82.0019130404) };

			\addplot [mark=*,mark size=3pt] coordinates { (896,197.9393) };
						
			\addplot [mark=*,mark size=3pt] coordinates { (224,58.2478) };

\end{loglogaxis}

\end{tikzpicture}




			



\caption{Ex.~\ref{ex:one:conv}: Convergence of $\eta_\ell^2$ for standard D\"orfler marking vs.\ expanded D\"orfler with 
$\theta = 0.4$ and uniform refinement (left). 
Expanded D\"orfler marking (squares) vs.\ uniform refinement (circles) for different values of $k>0$ (right).}
  \label{fig:ex1:compare_pp_marking}

\end{figure}

\begin{figure}


\begin{tikzpicture}
\begin{loglogaxis}[
 width=0.48\textwidth, height=6.5cm,
legend style={at={(0.02,0.42)},anchor=south west, align=left, draw = none },
legend cell align={left},
xlabel={number of elements},
ylabel={condition number},
]
\addplot+[solid,mark=triangle*, mark size=2pt,mark options={line width=1.0pt},color=red] table
	[x=number_of_elements,y=cond_number]{figures/Lshape_cond_mm_k1_ada.csv};
\addlegendentry{$k=1$}	

\addplot+[solid,mark=*, mark size=2pt,mark options={line width=1.0pt},color=blue] table
	[x=number_of_elements,y=cond_number]{figures/Lshape_cond_mm_k2_ada.csv};
\addlegendentry{$k=2$}	

\addplot+[solid,mark=diamond*, mark size=2pt,mark options={line width=1.0pt},color=green] table
[x=number_of_elements,y=cond_number]{figures/Lshape_cond_mm_k4_ada.csv};
\addlegendentry{$k=4$}	

\addplot+[solid,mark=pentagon*, mark size=2pt,mark options={line width=1.0pt},color=yellow] table
[x=number_of_elements,y=cond_number]{figures/Lshape_cond_mm_k8_ada.csv};
\addlegendentry{$k=8$}	

\addplot+[solid,mark=square*, mark size=2pt,mark options={line width=1.0pt},color=pink] table
[x=number_of_elements,y=cond_number]{figures/Lshape_cond_mm_k16_ada.csv};
\addlegendentry{$k=16$}	
		
	

\end{loglogaxis}
\end{tikzpicture}
\hfill
\begin{tikzpicture}
\begin{loglogaxis}[
 width=0.48\textwidth, height=6.5cm,
legend style={at={(0.02,0.42)},anchor=south west, align=left, draw = none },
legend cell align={left},
xlabel={number of elements},
ylabel={condition number},
]
\addplot+[solid,mark=triangle*, mark size=2pt,mark options={line width=1.0pt},color=red] table
[x=number_of_elements,y=cond_number]{figures/Lshape_cond_pp_k1_ada.csv};
\addlegendentry{$k=1$}	

\addplot+[solid,mark=*,mark size=2pt,mark options={line width=1.0pt},color=blue] table
[x=number_of_elements,y=cond_number]{figures/Lshape_cond_pp_k2_ada.csv};
\addlegendentry{$k=2$}	

\addplot+[solid,mark=diamond*, mark size=2pt,mark options={line width=1.0pt},color=green] table
[x=number_of_elements,y=cond_number]{figures/Lshape_cond_pp_k4_ada.csv};
\addlegendentry{$k=4$}	

\addplot+[solid,mark=pentagon*, mark size=2pt,mark options={line width=1.0pt},color=yellow] table
[x=number_of_elements,y=cond_number]{figures/Lshape_cond_pp_k8_ada.csv};
\addlegendentry{$k=8$}	

\addplot+[solid,mark=square*, mark size=2pt,mark options={line width=1.0pt},color=pink] table
[x=number_of_elements,y=cond_number]{figures/Lshape_cond_pp_k16_ada.csv};
\addlegendentry{$k=16$}



\end{loglogaxis}
\end{tikzpicture}

	
	\caption{Ex.~\ref{ex:one:non}: Condition number of the arising linear system. The condition number for the first 20 adaptive steps for non-convex case (left) and the convex case (right).}
	\label{fig:ex1:compare_cond}
\end{figure}
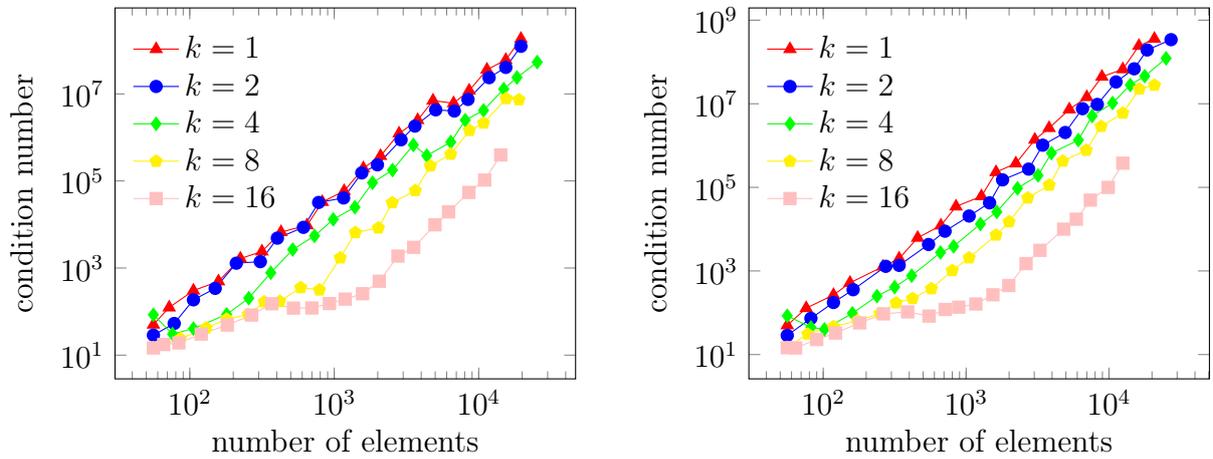

%
%
%
%
%

\begin{figure}
	\adjincludegraphics[width=0.40\textwidth,Clip={.10\width} {.06\height} {0.0\width} {.05\height}]{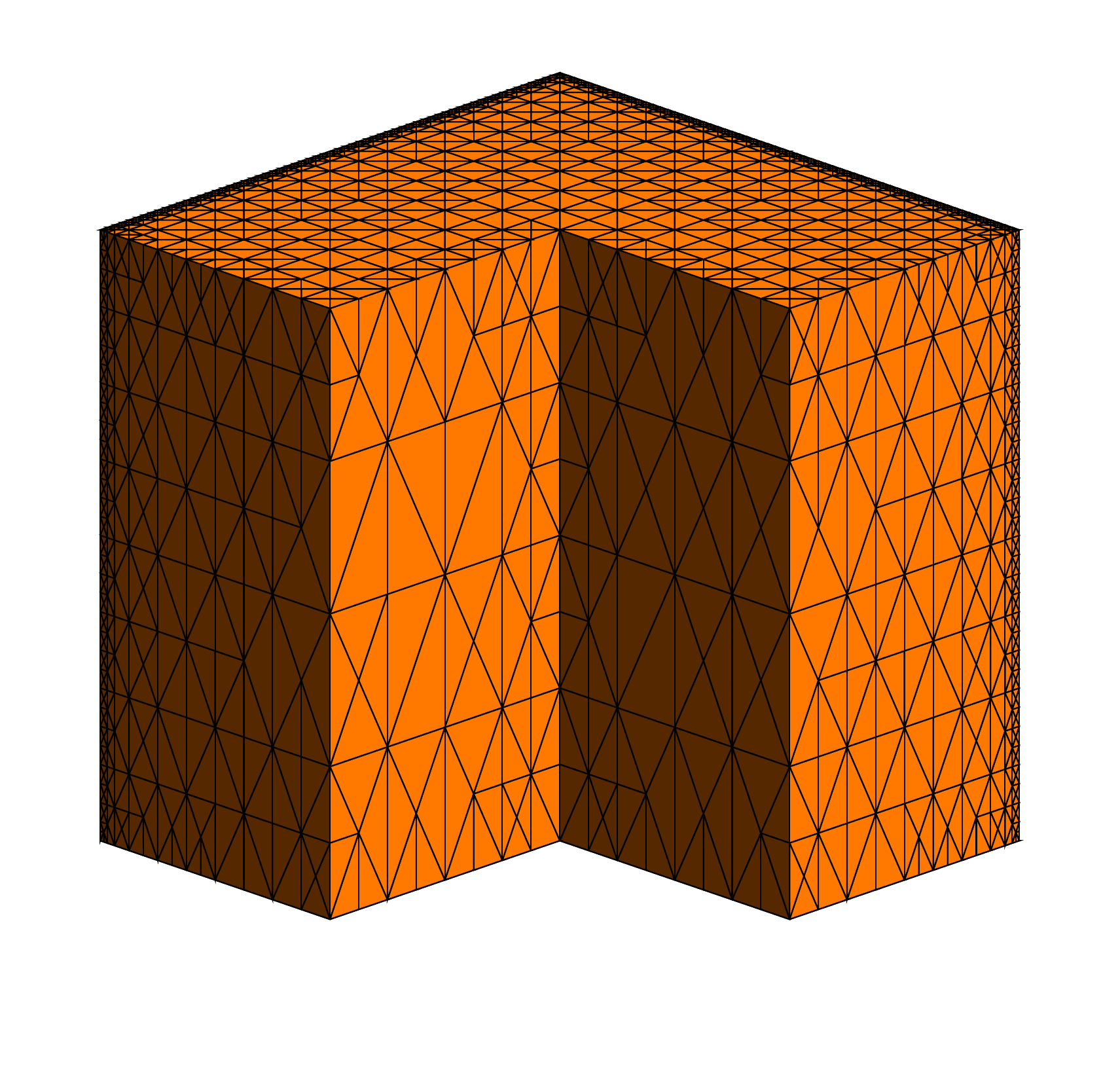}
	\adjincludegraphics[width=0.40\textwidth,Clip={.10\width} {.06\height} {0.0\width} {.05\height}]{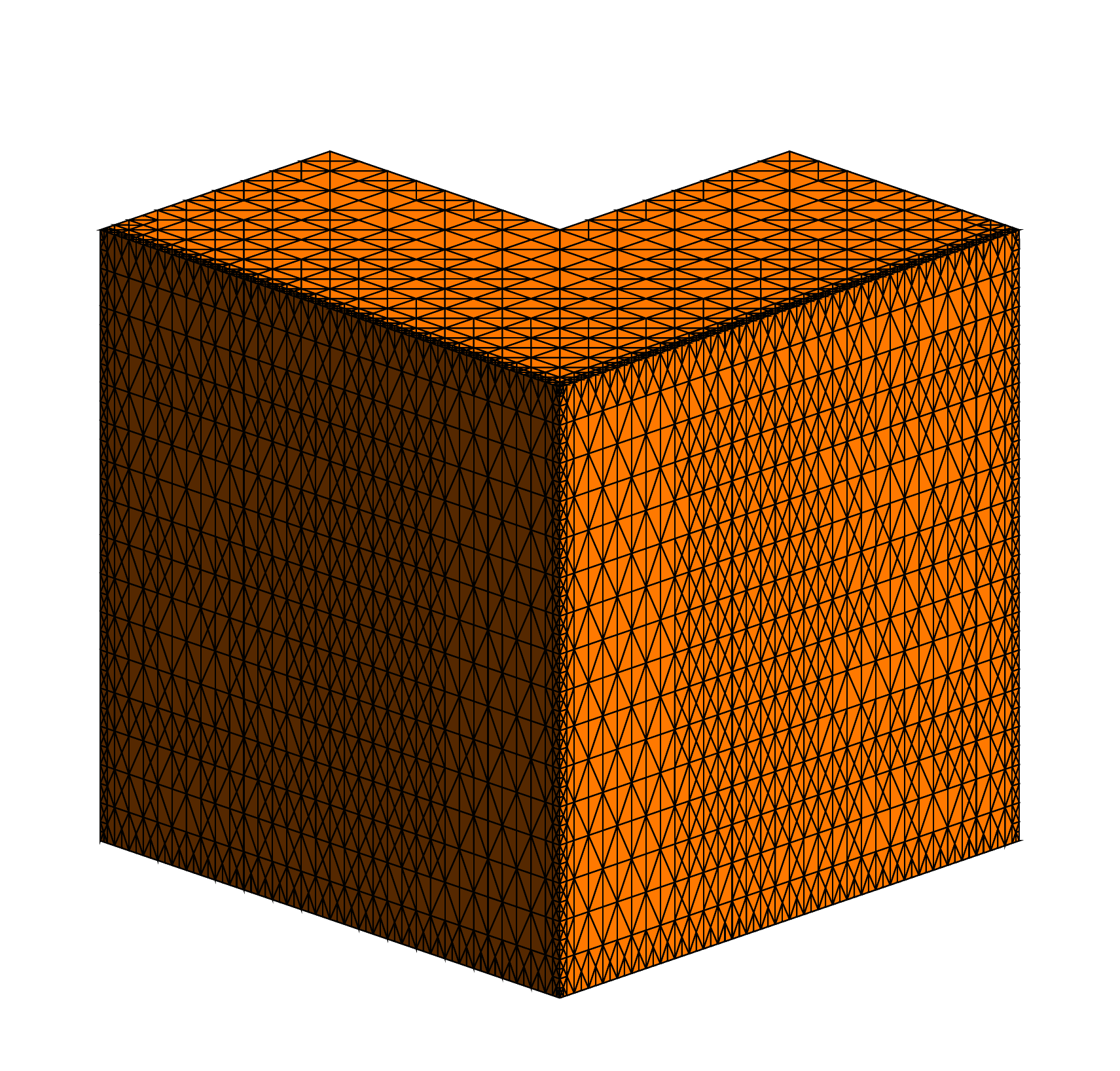}

	\caption{Ex.~\ref{ex:one:conv}: Triangulation $\TT_{16}$ with $7472$ elements in the convex case.
	The refinement focuses on the surface hit by the incoming wave (right), whereas
	all parts in the shadow remain relatively coarse (left).}
	\label{fig:ex1:meshes_pp}	  
\end{figure}

\subsection{Sound-hard scattering on a L-shaped domain}
\label{ex:two}
For the second example, we consider sound-hard scattering on an L-shaped domain from Figure~\ref{fig:ex1:inital_mesh}.
The direction of the incident wave is given by  $a = (-1/\sqrt{2},1/\sqrt{2},0)^T$, hitting the scatterer on 
the non-convex part; see Figure~\ref{fig:ex2:setup} (left).

 Figure~\ref{fig:ex2:compare_theta} (left) compares uniform vs.\ adaptive mesh-refinement for fixed $k=1$ and various $\theta = \{0.2,0.4,0.6,0.8\}$.
Algorithm~\ref{algorithm} leads to the improved rate $\eta_\ell^2 = \OO(N^{-1})$, while uniform mesh-refinement leads to 
a reduced rate $\OO(N^{-2/3})$. Figure~\ref{fig:ex2:compare_k} shows the adaptive rate for various $k \in \{1,2,4,8,16 \}$ and fixed $\theta=0.2$ (left)
as well as $\theta=0.4$ (right).
A higher wavenumber $k$ just influences the invoked constants and the length of the preasymptotic phase, but does not effect the rate of convergence. 
For $k=16$, we admit that the computed range is not sufficient to observe a better rate of convergence for the adaptive scheme.
Finally,  Figure~\ref{fig:ex2:compare_theta} (right) plots the condition number of the arising linear system of the discrete formulation~\eqref{eq:discreteformulation:kompakt:hypsing}. Similar to sound-soft scattering, the condition number indicates that the linear system admits a unique solution for every $\ell \in \N$ and hence Algorithm~\ref{algorithm} never enforced uniform mesh-refinement in Step~{\rm{(i)}}.


 \begin{figure}
 	\centering
 	\adjincludegraphics[width=0.50\textwidth,Clip={.10\width} {.05\height} {0.0\width} {.1\height}]{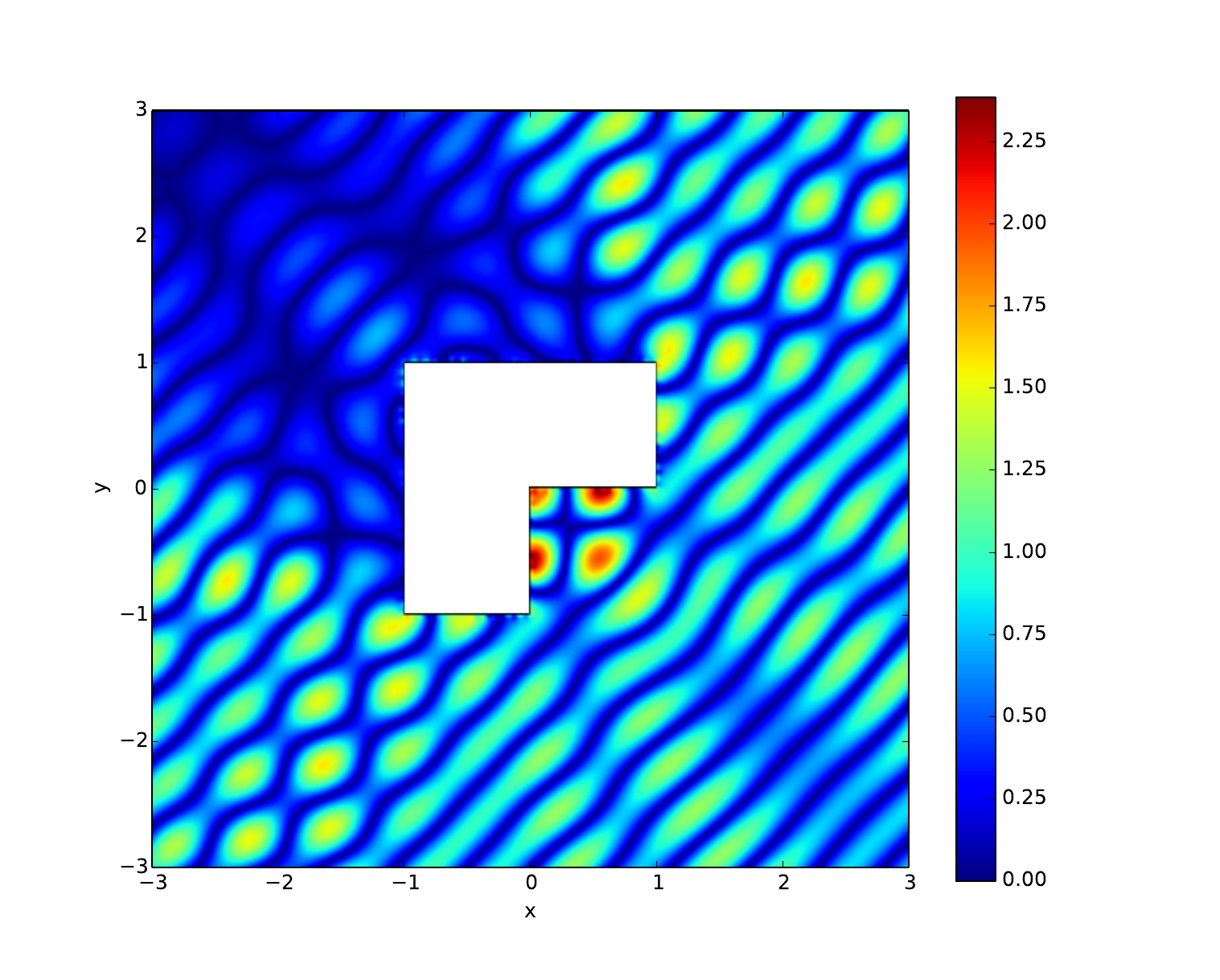}
 	\caption{Ex.~\ref{ex:two}: Total field $u^{\rm tot}$ for sound-hard scattering with wavenumber $k=8$.
 	The incident wave $u^{\operatorname{inc}}$ hits the scatterer on the non-convex part of the domain. }
 	 	\label{fig:ex2:setup}
 \end{figure} 	

 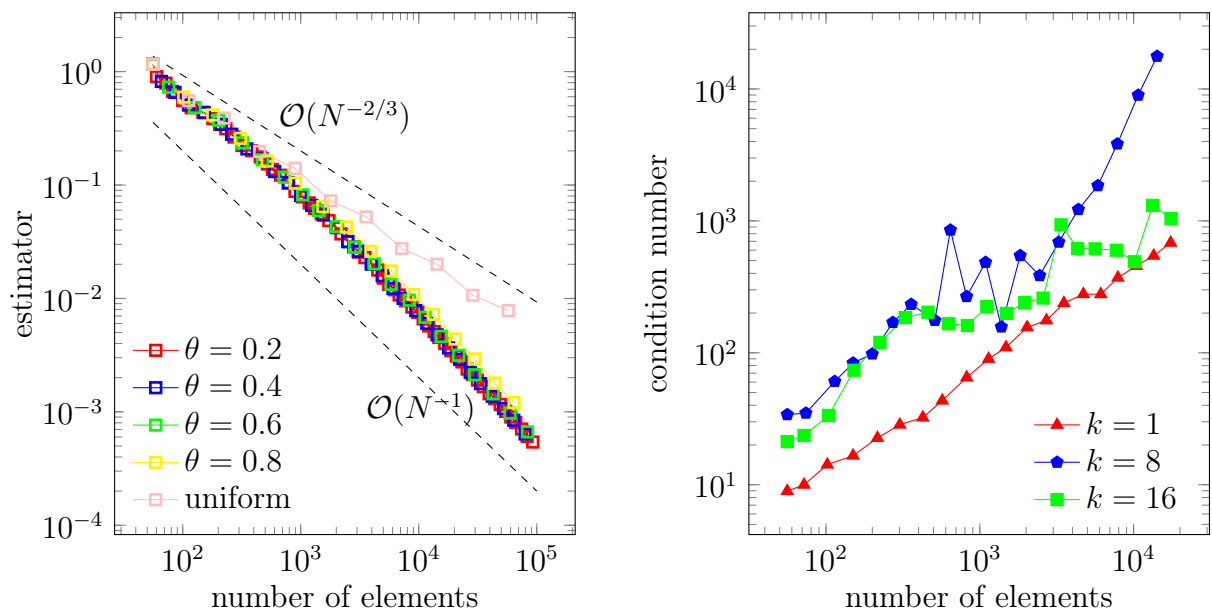
\begin{figure}
 	\centering


\begin{tikzpicture}
\begin{loglogaxis}[
 width=0.48\textwidth, height=8.5cm,
legend style={at={(0.02,0.02)},anchor=south west, align=left, draw = none },
legend cell align={left},
xlabel={number of elements},
ylabel={estimator},
]

	\addplot+[solid,mark=square,mark size=2pt,mark options={line width=1.0pt},color=red] table
		[x=number_of_elements,y=squared]{figures/sh_Lshape_mm_k1_theta02.csv};
	\addlegendentry{$\theta=0.2$}

	\addplot+[solid,mark=square,mark size=2pt,mark options={line width=1.0pt},color=blue] table
		[x=number_of_elements,y=squared]{figures/sh_Lshape_mm_k1_theta04.csv};
	\addlegendentry{$\theta=0.4$}

	\addplot+[solid,mark=square,mark size=2pt,mark options={line width=1.0pt},color=green] table
		[x=number_of_elements,y=squared]{figures/sh_Lshape_mm_k1_theta06.csv};
	\addlegendentry{$\theta=0.6$}

	\addplot+[solid,mark=square,mark size=2pt,mark options={line width=1.0pt},color=yellow] table
		[x=number_of_elements,y=squared]{figures/sh_Lshape_mm_k1_theta08.csv};
	\addlegendentry{$\theta=0.8$}

	\addplot+[solid,mark=square,mark size=2pt,mark options={line width=1.0pt},color=pink] table
		[x=number_of_elements,y=squared]{figures/sh_Lshape_mm_k1_theta1.csv};
	\addlegendentry{uniform}

	\addplot [black,dashed ] expression [domain=56:100000, samples = 10] {20*x^(-2/3)} node [midway,above,yshift=+0.45cm] {$\mathcal{O}(N^{-2/3})$};	
	\addplot [black,dashed ] expression [domain=56:100000, samples = 10] {20*x^(-1)} node [midway,below,yshift=-1.0cm, xshift=1.0cm] {$\mathcal{O}(N^{-1})$};

\end{loglogaxis}
\end{tikzpicture}
\hfill
\begin{tikzpicture}
\begin{loglogaxis}[
width=0.48\textwidth, height=8.5cm,
legend style={at={(0.60,0.02)},anchor=south west, align=left, draw = none },
legend cell align={left},
xlabel={number of elements},
ylabel={condition number},
]
\addplot+[solid,mark=triangle*, mark size=2pt,mark options={line width=1.0pt},color=red] table
[x=number_of_elements,y=cond_number]{figures/sh_Lshape_cond_k1_ada.csv};
\addlegendentry{$k=1$}	

%

\addplot+[solid,mark=pentagon*, mark size=2pt,mark options={line width=1.0pt},color=blue] table
[x=number_of_elements,y=cond_number]{figures/sh_Lshape_cond_k8_ada.csv};
\addlegendentry{$k=8$}	

\addplot+[solid,mark=square*, mark size=2pt,mark options={line width=1.0pt},color=green] table
[x=number_of_elements,y=cond_number]{figures/sh_Lshape_cond_k16_ada.csv};
\addlegendentry{$k=16$}	



\end{loglogaxis}
\end{tikzpicture}
 	\caption{Ex.~\ref{ex:two}: Convergence of $\eta_\ell^2$ for different values of $\theta \in \{0.2,0.4,0.6,0.8\}$ as well as uniform refinement (left).
 	The plot uses expanded D\"orfler marking with $k=1$. Condition number of the linear system in~\eqref{eq:discreteformulation:kompakt:hypsing} (right).}
 	\label{fig:ex2:compare_theta}		  
 \end{figure}

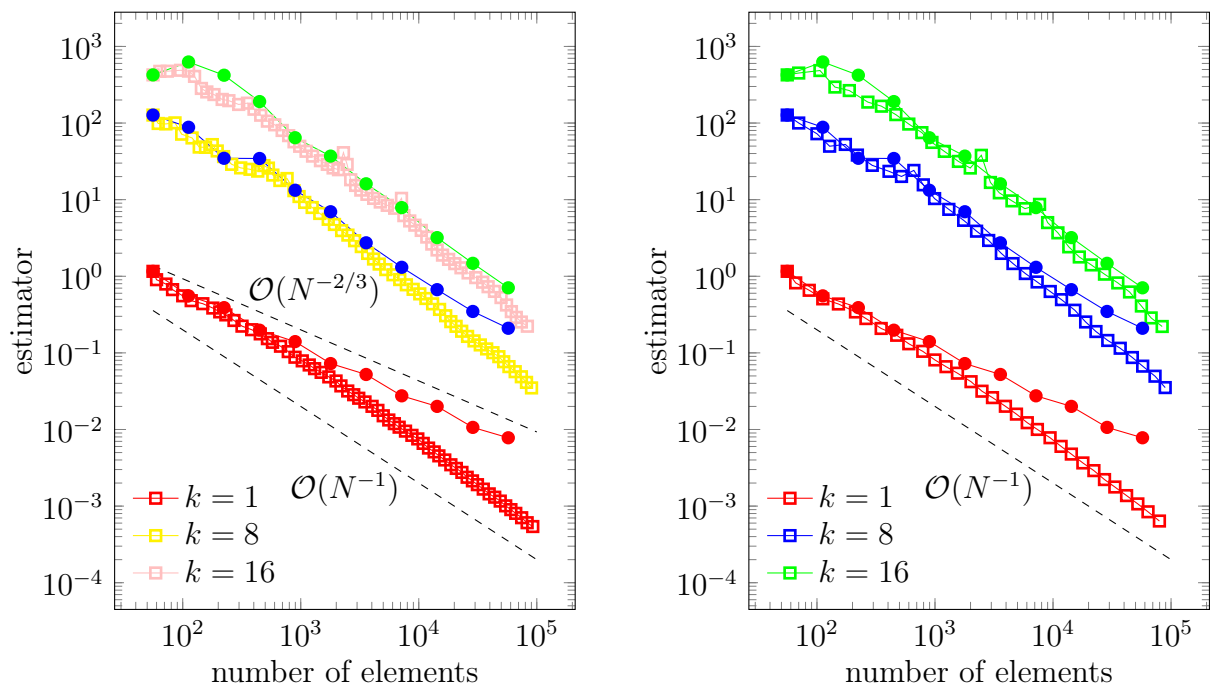
\begin{figure}


\begin{tikzpicture}
\begin{loglogaxis}[
 width=0.48\textwidth, height=9.5cm,
legend style={at={(0.02,0.02)},anchor=south west, align=left, draw = none },
legend cell align={left},
xlabel={number of elements},
ylabel={estimator},
]
	\addplot+[solid,mark=square,mark size=2pt,mark options={line width=1.0pt},color=red] table
		[x=number_of_elements,y=squared]{figures/sh_Lshape_mm_k1_theta02.csv};
	\addlegendentry{$k=1$}

	\addplot+[solid,mark=square,mark size=2pt,mark options={line width=1.0pt},color=yellow] table
		[x=number_of_elements,y=squared]{figures/sh_Lshape_mm_k8_theta02.csv};
	\addlegendentry{$k=8$}

	\addplot+[solid,mark=square,mark size=2pt,mark options={line width=1.0pt},color=pink] table
		[x=number_of_elements,y=squared]{figures/sh_Lshape_mm_k16_theta02.csv};
	\addlegendentry{$k=16$}

	\addplot+[solid,mark=*,mark size=2pt,mark options={line width=1.0pt},color=red] table
		[x=number_of_elements,y=squared]{figures/sh_Lshape_mm_k1_theta1.csv};

	\addplot+[solid,mark=*,mark size=2pt,mark options={line width=1.0pt},color=blue] table
		[x=number_of_elements,y=squared]{figures/sh_Lshape_mm_k8_theta1.csv};

	\addplot+[solid,mark=*,mark size=2pt,mark options={line width=1.0pt},color=green] table
		[x=number_of_elements,y=squared]{figures/sh_Lshape_mm_k16_theta1.csv};

	\addplot [black,dashed ] expression [domain=56:100000, samples = 10] {20*x^(-2/3)} node [midway,above,yshift=+0.40cm, xshift=-0.40cm] {$\mathcal{O}(N^{-2/3})$};		
	\addplot [black,dashed ] expression [domain=56:100000, samples = 10] {20*x^(-1)} node [midway,below,yshift=-0.35cm] {$\mathcal{O}(N^{-1})$};

\end{loglogaxis}

\end{tikzpicture}
\hfill
\begin{tikzpicture}
\begin{loglogaxis}[
 width=0.48\textwidth, height=9.5cm,
legend style={at={(0.02,0.02)},anchor=south west, align=left, draw = none },
legend cell align={left},
xlabel={number of elements},
ylabel={estimator},
]
	\addplot+[solid,mark=square,mark size=2pt,mark options={line width=1.0pt},color=red] table
		[x=number_of_elements,y=squared]{figures/sh_Lshape_mm_k1_theta04.csv};
	\addlegendentry{$k=1$}



	\addplot+[solid,mark=square,mark size=2pt,mark options={line width=1.0pt},color=blue] table
		[x=number_of_elements,y=squared]{figures/sh_Lshape_mm_k8_theta04.csv};
	\addlegendentry{$k=8$}

	\addplot+[solid,mark=square,mark size=2pt,mark options={line width=1.0pt},color=green] table
		[x=number_of_elements,y=squared]{figures/sh_Lshape_mm_k16_theta04.csv};
	\addlegendentry{$k=16$}

	\addplot+[solid,mark=*,mark size=2pt,mark options={line width=1.0pt},color=red] table
		[x=number_of_elements,y=squared]{figures/sh_Lshape_mm_k1_theta1.csv};
	
%
	
	\addplot+[solid,mark=*,mark size=2pt,mark options={line width=1.0pt},color=blue] table
		[x=number_of_elements,y=squared]{figures/sh_Lshape_mm_k8_theta1.csv};

	\addplot+[solid,mark=*,mark size=2pt,mark options={line width=1.0pt},color=green] table
		[x=number_of_elements,y=squared]{figures/sh_Lshape_mm_k16_theta1.csv};

	\addplot [black,dashed ] expression [domain=56:100000, samples = 10] {20*x^(-1)} node [midway,below,yshift=-0.35cm] {$\mathcal{O}(N^{-1})$};

\end{loglogaxis}

\end{tikzpicture}




			



\caption{Ex.~\ref{ex:two}: Convergence of $\eta_\ell^2$ for expanded D\"orfler (squares) vs.\ uniform refinement (circles) for different values of $k \in \{1,2,4,8,16\}$. 
The computations use $\theta = 0.2$ (left) as well as $\theta = 0.4$ (right).
}
\label{fig:ex2:compare_k}
\end{figure}

\subsection{Sound-soft scattering on a smooth sphere}
\label{ex:three}
As third example, we consider sound-soft scattering on a smooth sphere with radius $1$.
The direction of the incident wave is given by  $a = (1,0,0)^T$; see Figure~\ref{fig:ex3:initial_mesh} (right). 
We start with a very coarse approximation of the sphere and hence with a coarse initial mesh $\TT_{0}$ with 
$32$ elements (Figure~\ref{fig:ex3:initial_mesh} (left)), which hardly resolves the geometry. Additionally to the usual mesh-refinement by bisection of triangles, the newly created nodes are projected onto the sphere. Hence, the mesh-refinement in Algorithm~\ref{algorithm} provides 
also a better geometry approximation. If not stated otherwise, all computation use $\theta = 1/2$.

Figure~\ref{fig:ex3:convergence} compares uniform vs.\ adaptive mesh-refinement with and without the expanded marking strategy for fixed
values of $k=2$ (left) and $k=16$ (right). 
Since the computational domain is asymptotically smooth, there are no generic singularities, in contrast to the previous examples.
Hence, we observe that both, uniform mesh-refinement as well as Algorithm~\ref{algorithm} lead to the optimal rate of convergence $\eta_\ell^2 = \OO(N^{-3/2})$.
Different values of $k \in \{1,2,4,8,16 \}$ just affect the length of the preasymptotic phase; see Figure~\ref{fig:ex3:theta} (right). 
Since only one half of the (approximated) sphere is hit by the incoming wave, the adaptive algorithm 
does not lead to uniform refinement. Instead the mesh-refinement is focused on the surface hit by the incoming wave; see Figure~\ref{fig:ex3:mesh_evo}. 	
Figure~\ref{fig:ex3:theta} (left) compares different values of $\theta \in \{0.2,0.4,0.6,0.8,1\}$, where $\theta =1$ corresponds
to uniform mesh-refinement. In all cases, we observe optimal rates.

\begin{figure}
	\centering
	\adjincludegraphics[width=0.40\textwidth,Clip={.0\width} {.15\height} {0.0\width} {.2\height}]{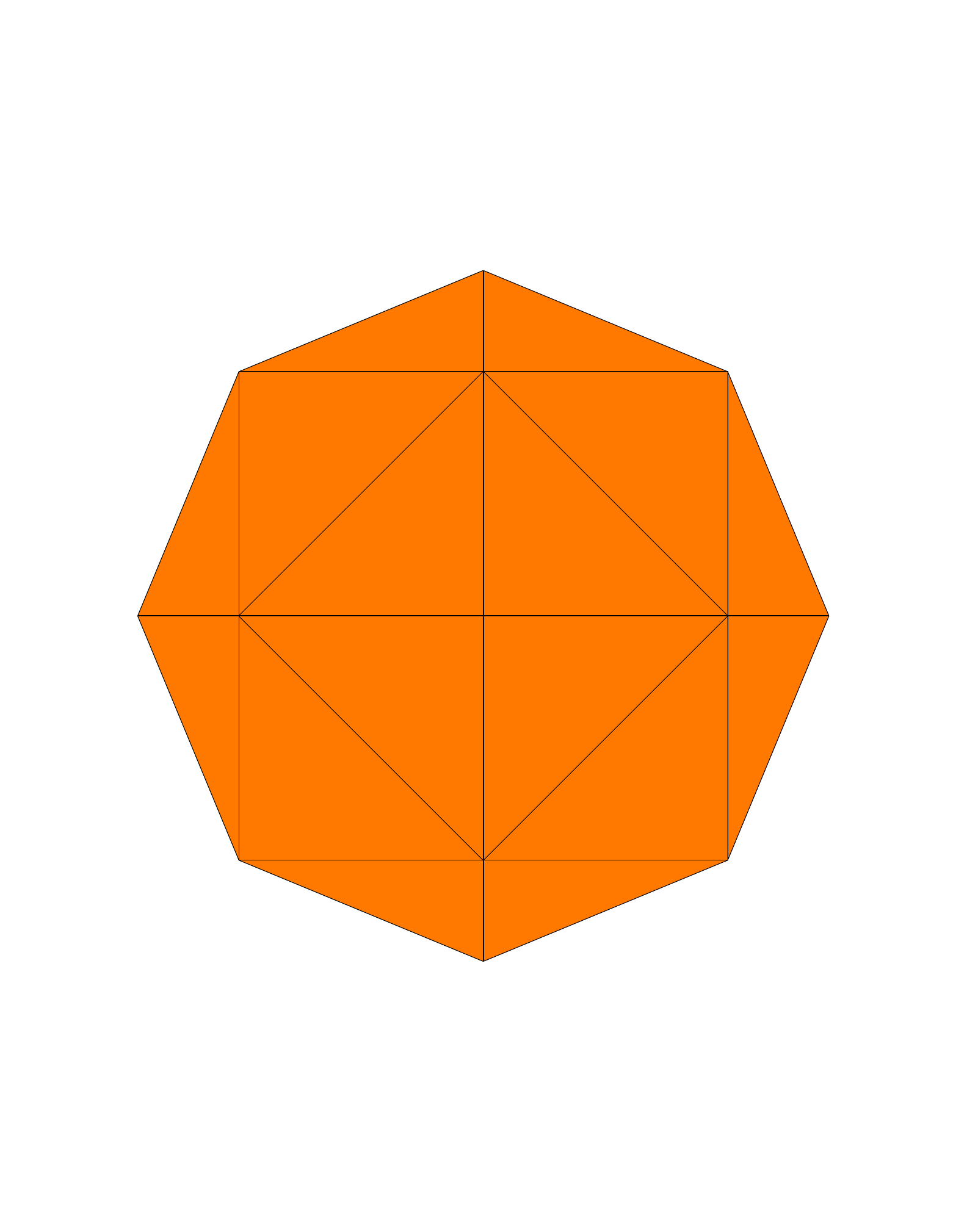}
	\includegraphics[width=0.46\textwidth]{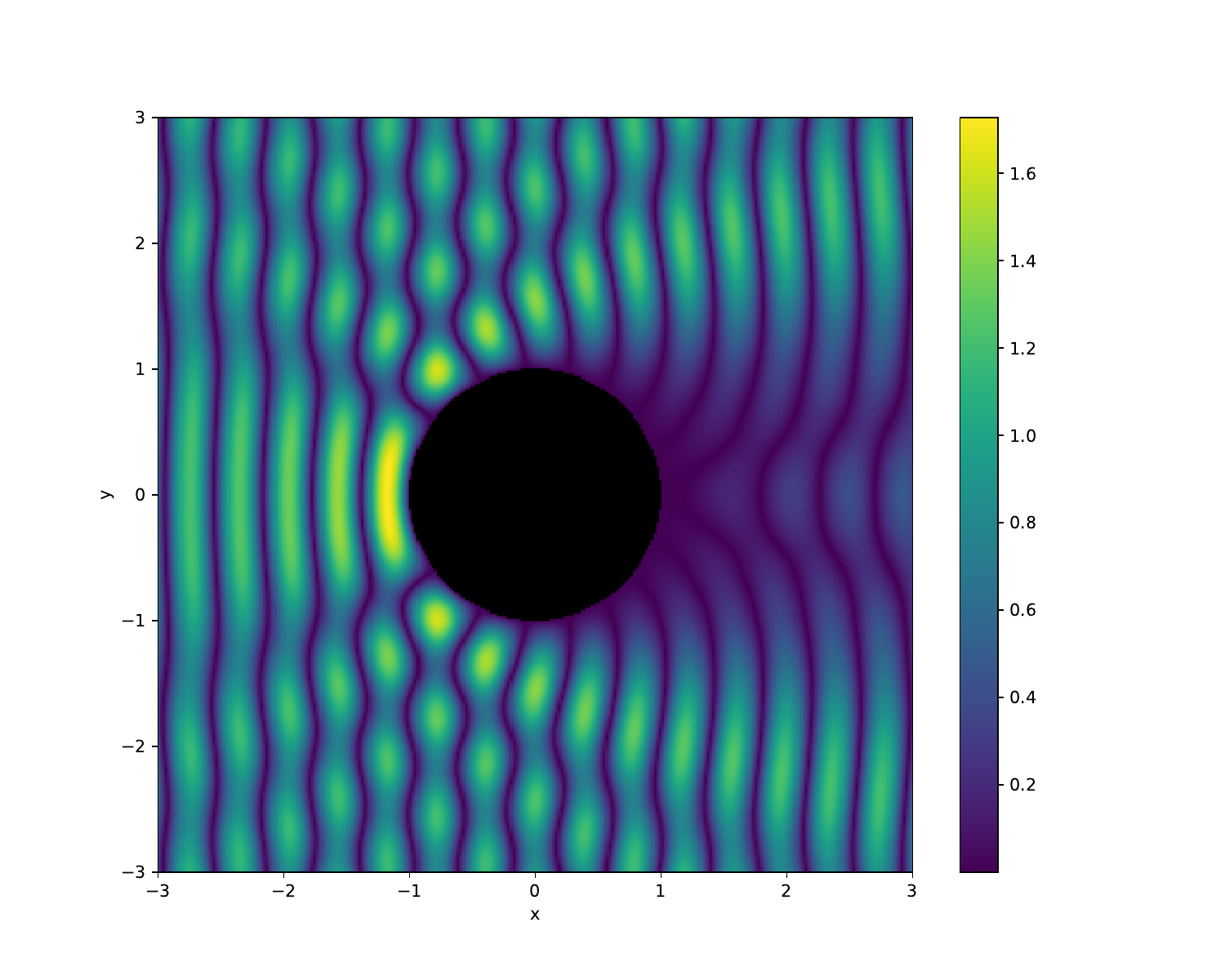}
	
	\caption{Ex.~\ref{ex:three}: Initial geometry and initial mesh $\TT_{0}$ (left).  Total field $u^{\rm tot}$ for sound-soft scattering with $k=8$ for the sphere (right). } 	 
	\label{fig:ex3:initial_mesh}
\end{figure}

 \begin{figure}
	\centering
	\begin{tikzpicture}
\begin{loglogaxis}[
width=0.48\textwidth, height=9.5cm,
legend style={at={(0.02,0.02)},anchor=south west, align=left, draw = none },
legend cell align={left},
xlabel={number of elements},
ylabel={estimator $\eta_\ell^2$},
]

\addplot+[solid,mark=*,mark size=2pt,mark options={line width=1.0pt},color=blue] table
[x=number_of_elements,y=squared]{figures/Ball_pp_k2_normal.csv};
\addlegendentry{D\"orfler}

\addplot+[solid,mark=triangle*,mark size=2pt,mark options={line width=1.0pt},color=green] table
[x=number_of_elements,y=squared]{figures/Ball_pp_k2_expanded.csv};
\addlegendentry{expanded D\"orfler}

\addplot+[solid,mark=square*,mark size=2pt,mark options={line width=1.0pt},color=red] table
[x=number_of_elements,y=squared]{figures/Ball_pp_k2_uniform.csv};
\addlegendentry{uniform}


\addplot [black,dashed ] expression [domain=20:120000, samples = 10] {600*x^(-3/2)} node [midway,above,yshift=+1.00cm] {$\mathcal{O}(N^{-3/2})$};

\end{loglogaxis}

\end{tikzpicture}
\hfill
\begin{tikzpicture}
\begin{loglogaxis}[
width=0.48\textwidth, height=9.5cm,
legend style={at={(0.02,0.02)},anchor=south west, align=left, draw = none },
legend cell align={left},
xlabel={number of elements},
ylabel={estimator $\eta_\ell^2$},
]

\addplot+[solid,mark=*,mark size=2pt,mark options={line width=1.0pt},color=blue] table
[x=number_of_elements,y=squared]{figures/Ball_pp_k16_normal.csv};
\addlegendentry{D\"orfler}

\addplot+[solid,mark=triangle*,mark size=2pt,mark options={line width=1.0pt},color=green] table
[x=number_of_elements,y=squared]{figures/Ball_pp_k16_expanded.csv};
\addlegendentry{expanded D\"orfler}

\addplot+[solid,mark=square*,mark size=2pt,mark options={line width=1.0pt},color=red] table
[x=number_of_elements,y=squared]{figures/Ball_pp_k16_uniform.csv};
\addlegendentry{uniform}


\addplot [black,dashed ] expression [domain=20:120000, samples = 10] {15000*x^(-3/2)} node [midway,below,xshift=-1.00cm] {$\mathcal{O}(N^{-3/2})$};

\end{loglogaxis}

\end{tikzpicture}




			

	\caption{Ex.~\ref{ex:three}: Convergence of $\eta_\ell^2$ for uniform refinement vs. Algorithm~\ref{algorithm} with 
		expanded D\"orfler and standard D\"orfler marking for $k=2$ (left) and $k=16$ (right). }
	\label{fig:ex3:convergence}		  
\end{figure}
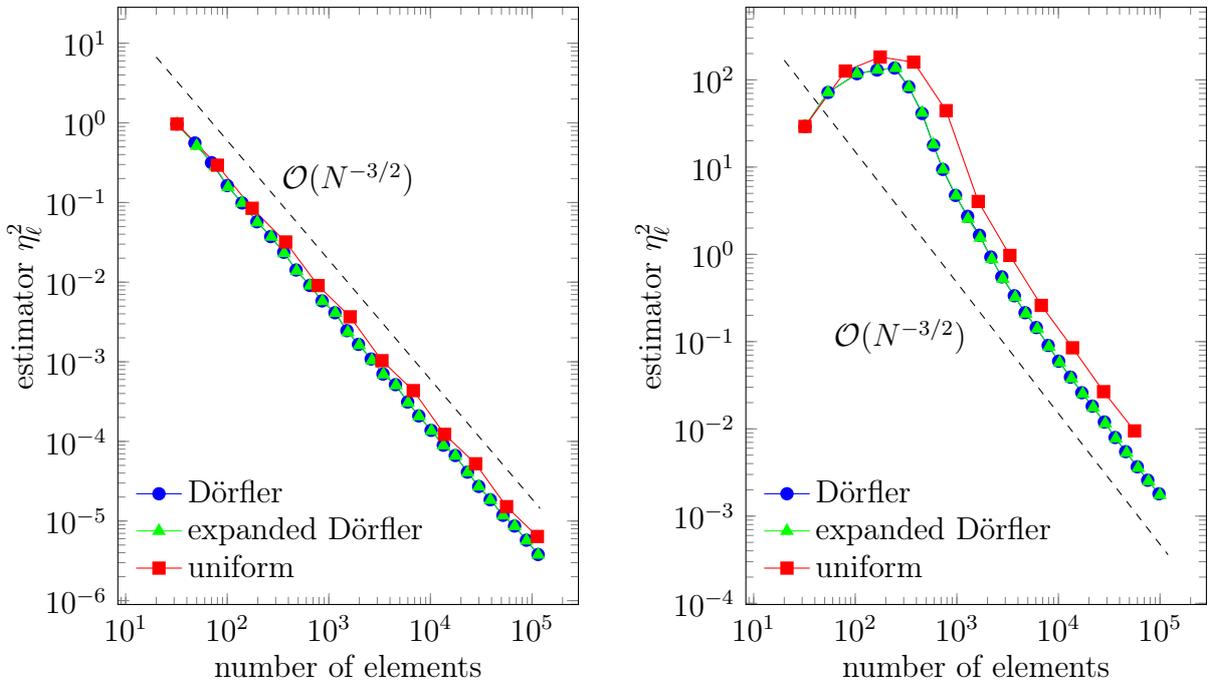

\begin{figure}
	\centering
	\adjincludegraphics[width=0.40\textwidth,Clip={.0\width} {.2\height} {0.0\width} {.2\height}]{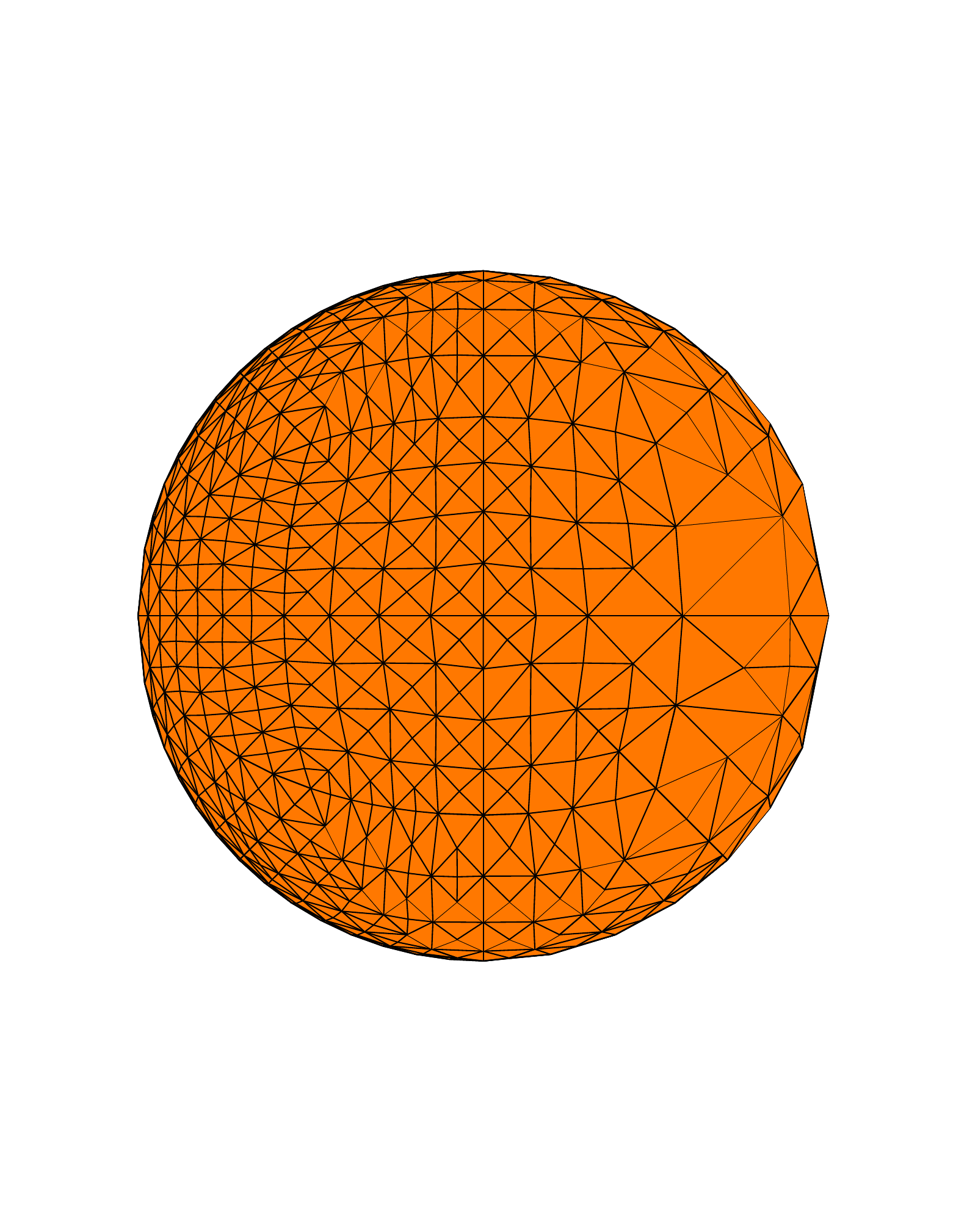}
	\adjincludegraphics[width=0.40\textwidth,Clip={.0\width} {.2\height} {0.0\width} {.2\height}]{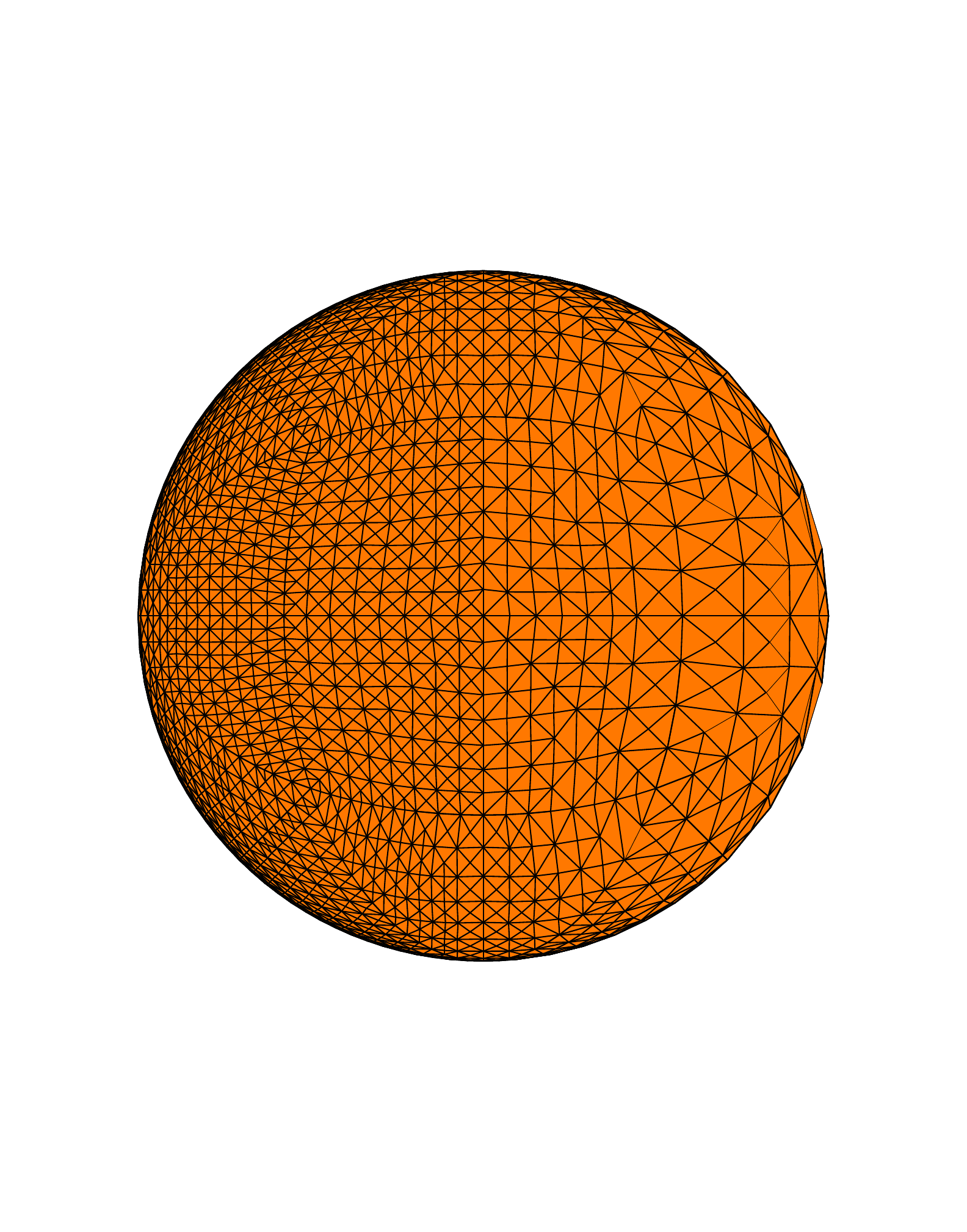}
	
	\caption{Ex.~\ref{ex:three}: Geometry and meshes $\TT_{10}$ (left) and $\TT_{15}$ (right) with $1698$ and $6206$ elements
		for Algorithm~\ref{algorithm} with the expanded D\"orfler marking. The incoming wave hits the sphere on the left.	 }
	\label{fig:ex3:mesh_evo}
\end{figure}

 \begin{figure}
	\centering
%
%
%
%
%
%
%
%
%
%
%
%
%
%
%
%
%
\begin{tikzpicture}
\begin{loglogaxis}[
 width=0.48\textwidth, height=9.5cm,
legend style={at={(0.02,0.02)},anchor=south west, align=left, draw = none },
legend cell align={left},
xlabel={number of elements},
ylabel={estimator $\eta_\ell^2$},
]
	\addplot+[solid,mark=square,mark size=2pt,mark options={line width=1.0pt},color=red] table
		[x=number_of_elements,y=squared]{figures/Ball_pp_k2_theta02.csv};
	\addlegendentry{$\theta = 0.2$}

	\addplot+[solid,mark=square,mark size=2pt,mark options={line width=1.0pt},color=blue] table
		[x=number_of_elements,y=squared]{figures/Ball_pp_k2_theta04.csv};
	\addlegendentry{$\theta = 0.4$}

	\addplot+[solid,mark=square,mark size=2pt,mark options={line width=1.0pt},color=green] table
		[x=number_of_elements,y=squared]{figures/Ball_pp_k2_theta06.csv};
	\addlegendentry{$\theta = 0.6$}

	\addplot+[solid,mark=square,mark size=2pt,mark options={line width=1.0pt},color=yellow] table
		[x=number_of_elements,y=squared]{figures/Ball_pp_k2_theta08.csv};
	\addlegendentry{$\theta = 0.8$}

	\addplot+[solid,mark=square,mark size=2pt,mark options={line width=1.0pt},color=orange] table
		[x=number_of_elements,y=squared]{figures/Ball_pp_k2_uniform.csv};
	\addlegendentry{uniform}

	\addplot [black,dashed ] expression [domain=20:120000, samples = 10] {500*x^(-3/2)} node [midway,above, xshift=+1.00cm] {$\mathcal{O}(N^{-3/2})$};

\end{loglogaxis}
\end{tikzpicture}
\hfill
\begin{tikzpicture}
\begin{loglogaxis}[
width=0.48\textwidth, height=9.5cm,
legend style={at={(0.02,0.02)},anchor=south west, align=left, draw = none },
legend cell align={left},
xlabel={number of elements},
ylabel={estimator $\eta_\ell^2$},
]
\addplot+[solid,mark=square,mark size=2pt,mark options={line width=1.0pt},color=red] table
[x=number_of_elements,y=squared]{figures/Ball_pp_k1_expanded.csv};
\addlegendentry{$k=1$}

\addplot+[solid,mark=square,mark size=2pt,mark options={line width=1.0pt},color=blue] table
[x=number_of_elements,y=squared]{figures/Ball_pp_k2_expanded.csv};
\addlegendentry{$k=2$}

\addplot+[solid,mark=square,mark size=2pt,mark options={line width=1.0pt},color=green] table
[x=number_of_elements,y=squared]{figures/Ball_pp_k4_expanded.csv};
\addlegendentry{$k=4$}

\addplot+[solid,mark=square,mark size=2pt,mark options={line width=1.0pt},color=yellow] table
[x=number_of_elements,y=squared]{figures/Ball_pp_k8_expanded.csv};
\addlegendentry{$k=8$}

\addplot+[solid,mark=square,mark size=2pt,mark options={line width=1.0pt},color=pink] table
[x=number_of_elements,y=squared]{figures/Ball_pp_k16_expanded.csv};
\addlegendentry{$k=16$}

\addplot [black,dashed ] expression [domain=56:142000, samples = 10] {15*x^(-3/2)} node [midway,below,yshift=-0.60cm] {$\mathcal{O}(N^{-3/2})$};

\end{loglogaxis}
\end{tikzpicture}




			

	\caption{Ex.~\ref{ex:three}: Convergence of $\eta_\ell^2$ for $k=2$ and different values of $\theta \in \{0.2,0.4,0.6,0.8\}$
		and uniform refinement (left). Estimator $\eta_\ell^2$ for fixed $\theta = 1/2$ and multiple values of $k \in \{1,2,4,8,16\}$ (right).}
	\label{fig:ex3:theta}		  
\end{figure}
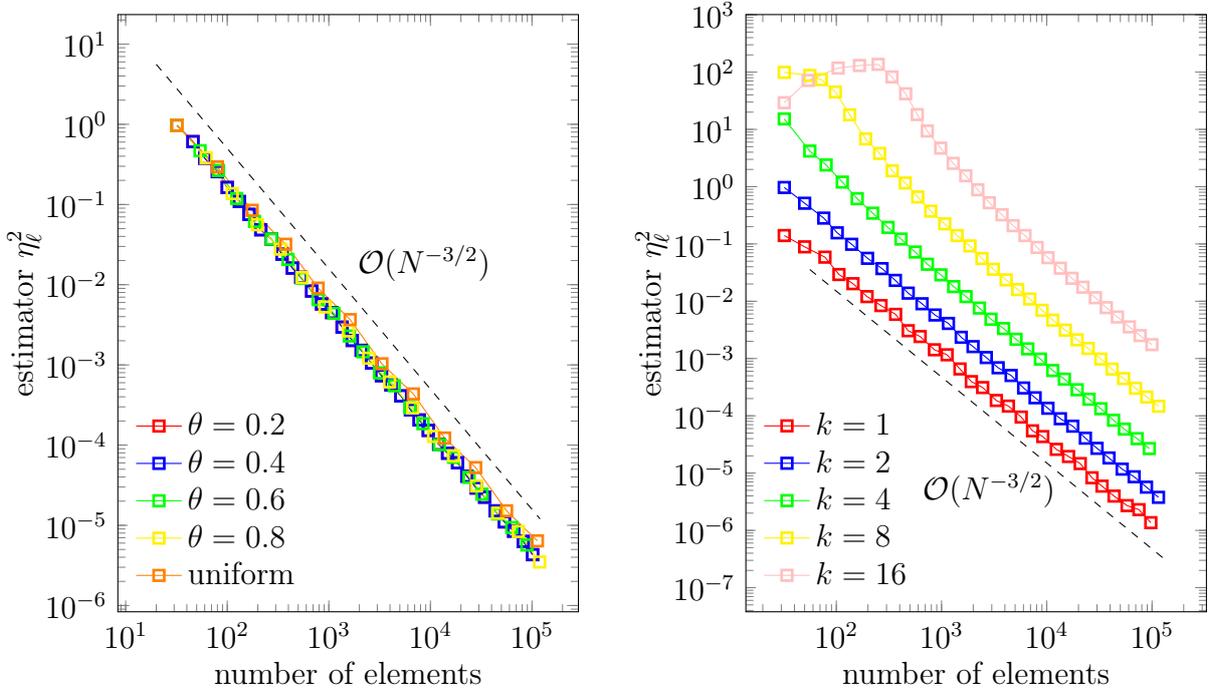


\section{Conclusions}
\label{section:conclusion}
In this article, we have analyzed an ABEM for the Helmholtz equation. 
Based on our previous work~\cite{helmholtz} on adaptive FEM, we proposed an adaptive algorithm (Algorithm~\ref{algorithm}), 
which adds an additional step to the usual adaptive scheme, if the underlying discrete system does not admit a unique solution.
Even though the necessity of the latter step could not be observed in the experiments, from the theoretical point of view, 
	this step has to be added to the algorithm in order to show even plain convergence.

We prove that Algorithm~\ref{algorithm} generates a sequence of discrete solutions, which is not only linearly convergent, 
but also converges with optimal algebraic rate. This generalizes existing results of optimal convergence for the Laplace 
equation to arbitrary wavenumber $k>0$. Although the presentation focuses on the weakly-singular integral equation,
all results directly transfer to the hypersingular integral equation as well. 
The proof of the main theorem relies on specific properties of the weighted residual error estimator. 
To verify these so-called \emph{axioms of adaptivity} from~\cite{axioms}, we prove novel and $k$-explicit inverse-type estimates 
for all underlying boundary integral operators associated with the Helmholtz equation. 
With techniques from~\cite{gantnerphd,MR3723732,fghp17}, the analysis can be generalized to adaptive isogeometric
BEM and allows the treatment of curved surfaces. 

We underpinned our theoretical findings with numerical experiments for the 3D Helm\-holtz equation. 
Thereby, the focus lies on the sound-soft and sound-hard scattering on different domains in $\R^3$. 
For the implementation, we restricted ourselves to lowest-order BEM. Due to generic edge singularities, 
higher-order polynomials would not lead to an improved order of convergence. 

The experiments confirm the theoretical results and show that Algorithm~\ref{algorithm} leads to optimal rates
independently of the actual wave number $k>0$. For both, sound-soft and sound-hard scattering,
the increase of $k$ just effects the length of the preasymptotic phase. 
The computations also show that the asymptotic regime starts quite early and 
that using restrictions for the initial mesh (e.g., $12$ elements per wavelength) is too pessimistic 
and not necessary. 

Overall, this paper appears to give the first mathematical proof of optimal convergence rates for ABEM for the 
Helmholtz equation. 


 \appendix

\section{Proof of Lemma~\ref{lemma:axioms}.}
\label{appendix:estimator}


\noindent
{\bf Proof of Lemma~\ref{lemma:axioms}~ \textrm{(i)}.} \quad 
Let $\TT_\coarse, \TT_\fine \in \T$ such that $\TT_\fine \in \refine(\TT_\coarse)$ and the corresponding discrete solutions $\Phi_\coarse\in \PP^p(\TT_\coarse)$
and $\Phi_\fine\in \PP^p(\TT_\fine)$ exist. For all non-refined elements
$T \in \TT_\coarse \cap \TT_\fine$, it holds that $h_\coarse(T) = h_\fine(T)$. 
Let $\UU:=\TT_\coarse \cap \TT_\fine$. Together with the inverse triangle inequality and the inverse estimate~\eqref{eq:discrete:invest:slo}, we obtain that
\begin{align*}
	| \eta_\coarse(\UU) - \eta_\fine(\UU)| 
	&= \big| \norm{h_\coarse^{1/2} \, \nabla_\Gamma \, (\slok \, \Phi_\coarse  - f)}{L^2(\bigcup (\UU))} 
		- \norm{h_\fine^{1/2} \, \nabla_\Gamma \, \slok \, (\Phi_\fine  - f)}{L^2(\bigcup (\UU))} \big| \\
	&\leq \norm{h_\coarse^{1/2} \, \nabla_\Gamma \, \slok \, (\Phi_\coarse  - \Phi_\fine)}{L^2(\Gamma)} 
	\leq \Cinvtilde \, (1+k^3) \, \norm{\Phi_\coarse - \Phi_\fine}{\H^{-1/2}(\Gamma)}.
\end{align*}
This concludes~\eqref{eq:stability} with $\Cstab := (1 + k^3) \, \Cinvtilde$. \qed



\medskip
\noindent
{\bf Proof of Lemma~\ref{lemma:axioms}~ \textrm{(ii)}.} \quad 
Let $\TT_\coarse, \TT_\fine \in \T$ such that $\TT_\fine \in \refine(\TT_\coarse)$ and the corresponding discrete solutions $\Phi_\coarse\in \PP^p(\TT_\coarse)$
and $\Phi_\fine\in \PP^p(\TT_\fine)$ exist.
For all  $T \in \TT_\fine \setminus \TT_\coarse$, reduction of the local mesh size implies that $h_\fine|_T \leq \qmesh \, h_\coarse|_T$. 
Using the Young inequality with arbitrary $\delta>0$, we estimate
\begin{align*}
	&\eta_\fine(\TT_\fine \setminus \TT_\coarse)^2 = \sum_{T \in \TT_\fine \setminus \TT_\coarse} \norm{h_{\fine}^{1/2} \, \nabla_{\Gamma} \, (\slok \Phi_\fine - f)}{L^2(T)}^2 \\
	&\leq \sum_{T \in \TT_\fine \setminus \TT_\coarse} \Big( \norm{h_\fine^{1/2} \, \nabla_{\Gamma} \, (\slok \Phi_\coarse - f)}{L^2(T)} + \norm{h_\fine^{1/2} \, \nabla_{\Gamma} \, \slok (\Phi_\fine - \Phi_\coarse)}{L^2(T)} \Big)^2 \\
	&\leq \sum_{T \in \TT_\fine \setminus \TT_\coarse} \Big( (1+\delta)  \, \qmesh \, \norm{h_\coarse^{1/2} \, \nabla_{\Gamma} \, (\slok \Phi_\coarse - f)}{L^2(T)}^2 
		 + (1+\delta^{-1}) \, \norm{h_\fine^{1/2} \, \nabla_{\Gamma} \, \slok (\Phi_\fine - \Phi_\coarse)}{L^2(T)}^2  \Big).
\end{align*}
Next, the inverse inequality~\eqref{eq:discrete:invest:slo} yields that
\begin{align*}		
	\eta_\coarse(\TT_\fine \setminus \TT_\coarse)^2 
	\leq  (1+\delta) \, \qmesh  \, \eta_\coarse(\TT_\coarse \setminus \TT_\fine)^2 +  (1+\delta^{-1}) \, (1+k^3)^2 \, \Cinvtilde^2  \, \norm{\Phi_\coarse - \Phi_\fine}{\H^{-1/2}(\Gamma)}^2.
\end{align*}
Choosing $\delta>0$ sufficiently small such that $\qred := (1+\delta) \, \qmesh < 1 $, we conclude~\eqref{eq:reduction} with $\Cred = (1+\delta^{-1}) \, (1+k^3)^2 \, \Cinvtilde^2$. \qed


\medskip
\noindent
{\bf Proof of Lemma~\ref{lemma:axioms}~ \textrm{(iii)}.} \quad 
We follow the arguments from~\cite[Theorem 5.3]{fkmp} for the case of $k=0$. 
Recall that notation of Proposition~\ref{eq:ss:model}. Then, existence and uniqueness of $\Phi_\fine\in \PP^p(\TT_\fine)$ is equivalent to $\beta_\fine>0$. 
The discrete inf--sup condition~\eqref{eq:ssprop:infsup} 
for $\XX_\fine := \PP^p(\TT_\fine)$ and $W_\fine := \Phi_\fine - \Phi_\coarse$ reads as
\begin{align}
\begin{split} \label{eq:proof:dreliability:1}
	\beta_\fine \, \norm{\Phi_\fine - \Phi_\coarse}{\H^{-1/2}(\Gamma)} 
	\leq \sup_{\Psi_\fine \in \XX_\fine \setminus \{0\} } \frac{\dual{\slok (\Phi_\fine -  \Phi_\coarse)}{\Psi_\fine}}{\norm{\Psi_\fine}{\H^{-1/2}(\Gamma)}}.
\end{split}
\end{align}
Let $\NN_\coarse$ denote the set of nodes corresponding to a triangulation $\TT_\coarse$. 
Let $\rho_z \in \SS^1(\TT_\coarse)$ denote the hat function associated with a node $z \in \NN_\coarse$. 
Further, let $\NN^{\RR}_\coarse:= \NN_\coarse \cap \big( \bigcup (\TT_\coarse \setminus \TT_\fine) \big)$ be the set of all nodes which belong to 
the refined elements.
Define $\RR_{\coarse,\fine} := \patch_{\coarse}(\TT_\coarse \setminus \TT_\fine)$ and $\QQ_\coarse :=  \RR_{\coarse,\fine} \setminus  (\TT_\coarse \setminus \TT_\fine)$.
These definitions give rise to disjoint decompositions
\begin{align*}
	\RR_{\coarse, \fine} = (\TT_\coarse \setminus \TT_\fine) \stackrel{\bullet}{\cup} \QQ_\coarse \quad \text{and} 
	\quad \TT_\coarse =  (\TT_\coarse \setminus \RR_{\coarse,\fine}) \stackrel{\bullet}{\cup} (\TT_\coarse \setminus \TT_\fine) \stackrel{\bullet}{\cup} \QQ_\coarse.
\end{align*}
Define $\chi := \sum_{z \in \NN^{\RR}_\coarse} \rho_z$. Then, $\chi \in \SS^1(\TT_\coarse)$ satisfies
$\rm{supp} (\chi) = \bigcup \RR_{\coarse,\fine}$ and $\chi |_{\bigcup (\TT_\coarse \setminus \TT_\fine)} \equiv 1$.
We define the operator $\pi_\coarse: \PP^p(\TT_\fine) \to \PP^p(\TT_\coarse)$ by
\begin{align*}
	\pi_\coarse(\Psi_\fine) := \begin{cases}
									0 \quad &\text{on }  \bigcup \big( \TT_\coarse \setminus \TT_\fine \big), \\
									\Psi_\fine \quad &\text{elsewhere}.
								\end{cases}
\end{align*}
For any $\Psi_\fine \in \PP^p(\TT_\fine)$ and $\Psi_\coarse \in \PP^p(\TT_\coarse)$, the Galerkin orthogonality yields that
\begin{align}\label{eq:proof:dreliability:2}
\dual{\slok \, (\Phi_\fine -  \Phi_\coarse)}{\Psi_\fine} = \dual{f- \slok \, \Phi_\coarse}{\Psi_\fine} = \dual{f- \slok \, \Phi_\coarse}{\Psi_\fine - \Psi_\coarse}.
\end{align}
Choose $\Psi_\coarse := \pi_\coarse(\Psi_\fine)\in \PP^p(\TT_\coarse)$ and note that $\rm{supp}  \big( (1-\pi_\coarse) \, \Psi_\fine \big)  \subseteq \bigcup (\TT_\coarse \setminus \TT_\fine)$. Using~\eqref{eq:proof:dreliability:2}, we derive that
\begin{align*}
	\dual{\slok \, (\Phi_\fine -&  \Phi_\coarse)}{\Psi_\fine} = \dual{f- \slok \Phi_\coarse}{(1 -\pi_\coarse) \, \Psi_\fine } 
	= \bigdual{\sum_{z \in \NN^{\RR}_\coarse} \rho_z \, (f- \slok \, \Phi_\coarse) }{(1 -\pi_\coarse) \, \Psi_\fine } \\
	&= \bigdual{\sum_{z \in \NN^{\RR}_\coarse} \rho_z \, (f- \slok \, \Phi_\coarse) }{\Psi_\fine} 
			- \bigdual{\sum_{z \in \NN^{\RR}_\coarse} \rho_z \,  (f- \slok \, \Phi_\coarse) }{\Psi_\fine|_{\bigcup \QQ_\coarse}}.
\end{align*}
Since $\QQ_\coarse \subset \TT_\coarse \cap \TT_\fine$, we obtain that $h_\coarse(T) = h_\fine(T)$ for all $T \in \QQ_\coarse$. We estimate 
\begin{align*}
	| \dual{\slok (\Phi_\fine -  \Phi_\coarse)}{\Psi_\fine} | 
	&\leq \bignorm{\sum_{z \in \NN^{\RR}_\coarse} \rho_z \, (f- \slok \, \Phi_\coarse)}{H^{1/2}(\Gamma)} \norm{\Psi_\fine}{\H^{-1/2}(\Gamma)} \\
		&\qquad + 	\bignorm{h_\coarse^{-1/2} \sum_{z \in \NN^{\RR}_\coarse} \rho_z \, (f- \slok \, \Phi_\coarse)}{L^2(\Gamma)} \norm{h_\fine^{1/2} \, \Psi_\fine}{L^2(\Gamma)}.
\end{align*}
Applying the inverse estimate~\eqref{eq:smallinvest:one} to the right-hand side, we see that
\begin{align*}
	| \dual{\slok (\Phi_\fine -  \Phi_\coarse)}{\Psi_\fine} | 
	&\lesssim \Big( \bignorm{\sum_{z \in \NN^{\RR}_\coarse} \rho_z (f- \slok \Phi_\coarse)}{H^{1/2}(\Gamma)} \\
	& \qquad + \bignorm{h_\coarse^{-1/2} \sum_{z \in \NN^{\RR}_\coarse} \rho_z (f- \slok \Phi_\coarse)}{L^2(\Gamma)} \Big) \norm{\Psi_\fine}{\H^{-1/2}(\Gamma)}.
\end{align*}
The terms in the parentheses are estimated as in~\cite{cms}. The sole difference is that compared to~\cite[Theorem 3.2]{cms} only hat functions associated with 
nodes $z \in \NN^{\RR}_\coarse$ are involved. Hence, the upper bound affects only $ \bigcup \RR_{\coarse,\fine} \subset \Gamma$ and reads
\begin{align}\label{eq:proof:dreliability:3}
	| \dual{\slok (\Phi_\fine -  \Phi_\coarse)}{\Psi_\fine} | \lesssim \norm{h_\coarse^{1/2} \, \nabla_\Gamma \, (f - \slok \Phi_\coarse)}{L^2(\bigcup \RR_{\coarse,\fine})} \, \norm{ \Psi_\fine}{\H^{-1/2}(\Gamma)}.
\end{align} 
Altogether, the combination of~\eqref{eq:proof:dreliability:1}--\eqref{eq:proof:dreliability:3} proves that
\begin{align*}
	\norm{\Phi_\fine - \Phi_\coarse}{\H^{-1/2}(\Gamma)} 
	\leq \frac{1}{\beta_\fine} \sup_{\Psi_\fine \in \XX_\fine} \frac{\dual{\slok (\Phi_\fine -  \Phi_\coarse)}{\Psi_\fine}}{\norm{\Psi_\fine}{\H^{-1/2}(\Gamma)}} 
	\lesssim \beta_\fine^{-1} \, \norm{h_\coarse^{1/2} \, \nabla_\Gamma \, (f - \slok \Phi_\coarse)}{L^2(\bigcup\RR_{\coarse,\fine})}.
\end{align*}
This concludes the proof.\qed

\medskip
\noindent
{\bf Proof of Lemma~\ref{lemma:axioms}~ \textrm{(iv)}.} \quad 
Let $\varepsilon>0$. Since uniform mesh-refinement yields convergence, we may choose $\TT_\fine \in \refine(\TT_\coarse)$ such that
$\norm{\phi - \Phi_\fine}{\H^{-1/2}(\Gamma)} \leq \varepsilon$. Lemma~\ref{lemma:axioms} (iii) hence proves that
\begin{align*}
\norm{\phi - \Phi_\coarse}{\H^{-1/2}(\Gamma)} \leq \norm{\phi - \Phi_\fine}{\H^{-1/2}(\Gamma)} + \norm{\Phi_\coarse - \Phi_\fine}{\H^{-1/2}(\Gamma)}
\leq \varepsilon + \Crel \eta_\coarse. 
\end{align*}
For $\varepsilon \to 0$, this concludes the proof. \qed

%

\section{Proof of Theorem~\ref{theorem:optimal}}
\label{appendix:optimal_proof}

The following Section gives a rigorous proof of Theorem~\ref{theorem:optimal}.
In doing so, we fill a gap in the proof of~\eqref{eq:optimal} in the abstract setting of~\cite{helmholtz}. 

\subsection{Proof of~(\ref{eq:linearconvergence})--(\ref{eq:cea})}
\label{appendix:proof:lin:convergence}
To see convergence~\eqref{eq:convergence} and, in particular, linear convergence~\eqref{eq:linearconvergence} of Algorithm~\ref{algorithm},
note that Step(v) of Algorithm~\ref{algorithm} implies
$\overline{\bigcup_{\ell \geq 0} \XX_\ell} = \H^{-1/2}(\Gamma)$ and hence the well-posedness~\cite[(A5)]{helmholtz} of the Galerkin formulation on the ``discrete limit space''. 
Moreover, Section~\ref{subsetcion:estimatorproperties} proves~\cite[(A1)--(A4)]{helmholtz}. 
Additionally, recall that $\bilina{\cdot}{\cdot} := \dual{\slonull(\cdot)}{(\cdot)}_{\Gamma}$ induces an equivalent energy norm $\enorm{\cdot}$ on $\H^{-1/2}(\Gamma)$.
Then, using $\HH := \H^{-1/2}(\Gamma)$ in~\cite[Proposition 11]{helmholtz} and~\cite[Theorem 19, Theorem 20]{helmholtz}, 
we immediately derive~\eqref{eq:linearconvergence}--\eqref{eq:cea}.

\subsection{Proof of~(\ref{eq:optimal})}
\label{appendix:proof:lin:convergence}
First, recall~\cite[Lemma 21]{helmholtz}, which recaps some important properties of the mesh refinement. 
\begin{lemma}\label{lemma:m:uniform}
	There exist $m \in \N$ and $\gamma >0$ such that the $m$-times uniform refinement $\widehat{\TT}_0$ of $\TT_0$ 
	satisfies the following properties {\rm (a)}--{\rm (d)}:
	\begin{enumerate}[label= {\rm (\alph*)}]
		\item  For all $\TT_\coarse \in \refine(\widehat{\TT}_0)$, the discrete inf--sup constant is bounded from below by $\gamma_\coarse \geq \gamma >0$. 
			In particular, there exists a unique Galerkin solution $\Phi_\coarse \in \PP^p(\TT_\coarse)$ to~\eqref{eq:modelproblem}.
	
		\item There holds quasi-monotonicity of the estimator, i.e., there exists a constant $\Cmon>0$ such that
		\begin{align*}
			\eta_\fine \leq \Cmon \eta_\coarse \quad \text{for all } \TT_\coarse \in \T \text{ and all } \TT_\fine \in \refine(\widehat{\TT}_0) \cap \refine(\TT_\coarse),
		\end{align*}
		provided that the Galerkin solution $\Phi_\coarse \in \PP^p(\TT_\coarse)$ exists and is unique. 
		
		\item There exists $\ell_3 \in \N_0$ such that $\TT_{\ell} \in \refine(\widehat{\TT}_0)$ for all $\ell \geq \ell_3$,
		where $\TT_\ell$ denotes the sequence of meshes generated by Algorithm~\ref{algorithm}.
		
		\item For all $\TT_\coarse \in \T$, the $m$-times uniform refinement $\widehat{\TT}_\coarse$ of $\TT_\coarse$ guarantees 
		$\# \widehat{\TT}_\coarse \leq \Cson^m \# \TT_\coarse$. \qed
	\end{enumerate}
\end{lemma}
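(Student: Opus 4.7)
The statement is a recap of~\cite[Lemma~21]{helmholtz}, adapted to the present ABEM setting. The strategy is to feed the abstract Proposition~\ref{prop:ss} with the nested sequence $\XX_\ell := \PP^p(\widehat{\TT}_0^{(\ell)}) \subset \H^{-1/2}(\Gamma)$, where $\widehat{\TT}_0^{(\ell)}$ denotes the $\ell$-fold uniform refinement of $\TT_0$. Density $\overline{\bigcup_\ell \XX_\ell} = \H^{-1/2}(\Gamma)$ follows from standard best-approximation estimates once one knows that the maximal mesh-size tends to $0$, which is immediate for uniform bisection. Continuous well-posedness holds by the assumption that $k^2$ is not an IDP-eigenvalue. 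Hence Proposition~\ref{prop:ss} is applicable and yields the threshold index used to define $m$.

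For (a), I apply Proposition~\ref{prop:ss} to obtain an index $\ell_0$ and $\gamma > 0$ such that every discrete subspace containing $\XX_{\ell_0}$ enjoys $\beta_\coarse \geq \gamma$. I then set $m := \ell_0$ and $\widehat{\TT}_0 := \widehat{\TT}_0^{(m)}$. For any $\TT_\coarse \in \refine(\widehat{\TT}_0)$, the refinement property $T = \bigcup\{T' \in \TT_\coarse : T' \subseteq T\}$ implies $\PP^p(\TT_\coarse) \supseteq \PP^p(\widehat{\TT}_0) = \XX_{\ell_0}$, whence $\beta_\coarse \geq \gamma$ and unique solvability of~\eqref{eq:discreteform:kompakt} on $\PP^p(\TT_\coarse)$. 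Part (a) is the key obstacle, because it is the only step where one must exit the purely combinatorial realm of mesh refinement and appeal to the compact-perturbation argument that underlies Proposition~\ref{prop:ss}.

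For (b), I combine (a) with the axioms from Lemma~\ref{lemma:axioms}. Splitting $\eta_\fine^2 = \eta_\fine(\TT_\fine \cap \TT_\coarse)^2 + \eta_\fine(\TT_\fine \setminus \TT_\coarse)^2$ and invoking stability~\eqref{eq:stability} together with reduction~\eqref{eq:reduction} gives
\begin{align*}
\eta_\fine^2 \leq 2\,\eta_\coarse^2 + C\,\norm{\Phi_\fine - \Phi_\coarse}{\H^{-1/2}(\Gamma)}^2
\end{align*}
for some constant $C$ depending only on $\Cstab$, $\Cred$, $\qred$. Since $\TT_\fine \in \refine(\widehat{\TT}_0)$, part (a) supplies $\beta_\fine \geq \gamma$, and discrete reliability~\eqref{eq:discrete:reliability} yields $\norm{\Phi_\fine - \Phi_\coarse}{\H^{-1/2}(\Gamma)} \lesssim \gamma^{-1} \eta_\coarse(\RR_{\coarse,\fine}) \leq \gamma^{-1} \eta_\coarse$. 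Plugging this into the previous display produces $\eta_\fine \leq \Cmon\,\eta_\coarse$ with $\Cmon$ independent of $\TT_\coarse,\TT_\fine$.

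For (c), I use that Step~(v) of Algorithm~\ref{algorithm} (expanded Dörfler marking) forces refinement of the element with largest diameter in every iteration, so that $\norm{h_\ell}{L^\infty(\Gamma)} \to 0$ as $\ell \to \infty$; see~\cite[Proposition~16]{helmholtz}. Since NVB and EB are deterministic, every element of any $\TT_\ell \in \T$ lies in a fixed refinement tree rooted at $\TT_0$, and Lemma~\ref{mesh:lemma:meshsize} shows that the mesh size contracts with factor $\qmesh < 1$ per bisection. Hence there exists a bisection depth $n(m)$ beyond which every element descends from an element of $\widehat{\TT}_0$; uniform mesh-size decay together with shape regularity implies that this depth is reached uniformly for $\ell \geq \ell_3$, yielding $\TT_\ell \in \refine(\widehat{\TT}_0)$. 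Finally, (d) is elementary: each of the $m$ uniform bisection steps multiplies the number of elements by at most $\Cson$, so $\#\widehat{\TT}_\coarse \leq \Cson^m \#\TT_\coarse$. This completes all four items.
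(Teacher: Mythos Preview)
The paper does not actually prove this lemma; it merely cites \cite[Lemma~21]{helmholtz} and places a \qed\ after the statement. Your reconstruction of the argument is correct and follows the standard route that underlies that citation: Proposition~\ref{prop:ss} applied to the uniform-refinement sequence yields (a); the axioms (A1)--(A3) of Lemma~\ref{lemma:axioms} combined with the uniform inf--sup bound from (a) yield (b); \cite[Proposition~16]{helmholtz} together with the deterministic bisection tree gives (c); and (d) is the trivial counting estimate.
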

Second, we need two auxiliary lemmas, which are found in~\cite[Proposition 4.12]{axioms}
resp.~\cite[Lemma 28]{helmholtz}.
\begin{lemma}[optimality of D\"orfler marking]
Suppose the assumptions of Theorem~\ref{theorem:optimal}. Then, for all $0 < \theta < \thetaopt$, there exists some $0<\gamma_{{\rm opt}}<1$ such that for all 
$\TT_\coarse \in \refine(\TT_{\ell_3})$ and all $\TT_\fine \in \refine(\TT_\coarse)$, it holds that
\begin{align}
	\eta_\fine \leq \gamma_{{\rm opt}} \eta_\coarse \quad \Longrightarrow \quad \theta \eta_\coarse^2 \leq \eta_\coarse (\RR_{\fine,\coarse})^2,
\end{align}
where $\RR_{\fine,\coarse}$ is the enlarged set of refined elements from ${\rm (3)}$ in Lemma~\ref{lemma:axioms}. \qed
\end{lemma}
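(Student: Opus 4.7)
I will prove the stated implication by contrapositive. Fix $\TT_\coarse \in \refine(\TT_{\ell_3})$ and $\TT_\fine \in \refine(\TT_\coarse)$. By Lemma~\ref{lemma:m:uniform}(a) applied to both $\TT_\coarse,\TT_\fine \in \refine(\widehat{\TT}_0)$, the discrete inf--sup constants are uniformly bounded from below, so $\Phi_\coarse$ and $\Phi_\fine$ exist and discrete reliability~\eqref{eq:discrete:reliability} holds with $\beta_\fine^{-1}$ controlled by $\widehat{\beta}$ (the latter being the fixed lower bound on $\beta_\fine^2$, or equivalently the square of the inf--sup constant $\gamma$ in Lemma~\ref{lemma:m:uniform}). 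Write $e := \Phi_\fine-\Phi_\coarse$ and $\UU := \TT_\coarse \cap \TT_\fine$. The key observation is the chain
\begin{align*}
\eta_\fine \;\ge\; \eta_\fine(\UU) \;\ge\; \eta_\coarse(\UU) - \Cstab \|e\|_{\H^{-1/2}(\Gamma)},
\end{align*}
where the first inequality is monotonicity of sums of squares and the second is the reverse triangle inequality coming from stability~\eqref{eq:stability}.

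The plan is to show: if $\eta_\coarse(\RR_{\coarse,\fine})^2 < \theta\,\eta_\coarse^2$, then the right-hand side above is strictly bounded below by $\gamma_{\rm opt}\,\eta_\coarse$ for a suitable $\gamma_{\rm opt} \in (0,1)$. First, since $\TT_\coarse\setminus\TT_\fine \subseteq \RR_{\coarse,\fine}$, splitting $\eta_\coarse^2 = \eta_\coarse(\UU)^2 + \eta_\coarse(\TT_\coarse\setminus\TT_\fine)^2$ yields
\begin{align*}
\eta_\coarse(\UU)^2 \;\ge\; \eta_\coarse^2 - \eta_\coarse(\RR_{\coarse,\fine})^2 \;>\; (1-\theta)\,\eta_\coarse^2,
\end{align*}
and thus $\eta_\coarse(\UU) > \sqrt{1-\theta}\,\eta_\coarse$. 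Second, discrete reliability~\eqref{eq:discrete:reliability} together with the assumed bound on $\eta_\coarse(\RR_{\coarse,\fine})$ gives
\begin{align*}
\Cstab\|e\|_{\H^{-1/2}(\Gamma)} \;\le\; \Cstab\Crel\,\widehat{\beta}^{-1/2}\,\eta_\coarse(\RR_{\coarse,\fine}) \;<\; \Cstab\Crel\,\widehat{\beta}^{-1/2}\sqrt{\theta}\,\eta_\coarse.
\end{align*}
Combining the last two displays,
\begin{align*}
\eta_\fine \;>\; \big(\sqrt{1-\theta} - \Cstab\Crel\,\widehat{\beta}^{-1/2}\sqrt{\theta}\,\big)\,\eta_\coarse \;=:\; \gamma_{\rm opt}\,\eta_\coarse.
\end{align*}

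It remains to verify that the constant $\gamma_{\rm opt}$ is strictly positive and strictly less than $1$ under the hypothesis $0 < \theta < \thetaopt = (1 + \Cstab^2\Crel^2/\widehat{\beta})^{-1}$. The inequality $\gamma_{\rm opt} > 0$ is equivalent to $(1-\theta)/\theta > \Cstab^2\Crel^2/\widehat{\beta}$, which rearranges to $1/\theta > 1 + \Cstab^2\Crel^2/\widehat{\beta} = \thetaopt^{-1}$, i.e.\ exactly $\theta < \thetaopt$. The bound $\gamma_{\rm opt} < 1$ is automatic since $\gamma_{\rm opt} \le \sqrt{1-\theta} < 1$. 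This proves the contrapositive and hence the lemma.

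The main obstacle is bookkeeping rather than a genuine difficulty: one must avoid Young's inequality (which would introduce a spurious $(1+\mu^{-1})$ factor and destroy the sharp threshold $\thetaopt$) by keeping the stability estimate in its linear form $|\eta_\fine(\UU) - \eta_\coarse(\UU)| \le \Cstab\|e\|$ and working with the reverse triangle inequality directly. Only then does the algebra collapse to the exact expression $\sqrt{1-\theta} - \Cstab\Crel\widehat{\beta}^{-1/2}\sqrt{\theta}$, whose positivity matches the definition of $\thetaopt$ verbatim. Note that we did not use the reduction property~\eqref{eq:reduction} at all: the inequality $\eta_\fine \ge \eta_\fine(\UU)$ is already strong enough because the contraction is forced by stability plus discrete reliability on the refined patch $\RR_{\coarse,\fine}$.
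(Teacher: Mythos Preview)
Your proof is correct and follows essentially the same route as the standard argument the paper cites from~\cite[Proposition~4.12]{axioms} (the paper itself does not reprove this lemma but defers to that reference): split $\eta_\coarse^2$ over refined and non-refined elements, use stability~\eqref{eq:stability} on the non-refined part and discrete reliability~\eqref{eq:discrete:reliability} on the refined patch, and verify that the resulting constant $\gamma_{\rm opt}=\sqrt{1-\theta}-\Cstab\Crel\widehat\beta^{-1/2}\sqrt\theta$ lies in $(0,1)$ precisely when $\theta<\thetaopt$. Your reading of $\widehat\beta$ as the square of the uniform inf--sup bound from Lemma~\ref{lemma:m:uniform}(a) is exactly what makes the threshold $\thetaopt$ match, and your remark that reduction~\eqref{eq:reduction} is not needed here is also in line with the cited proof.
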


\begin{lemma}\label{lemma:optimal}
Suppose the assumptions of Theorem~\ref{theorem:optimal}. Then, there exist constants $C_1,C_2>0$ such that for all $\ell \geq \ell_3$ and all $s >0$, there exists 
$\RR_\ell \subseteq \TT_\ell$ such that the following holds: If $\norm{\phi}{\Approx_{s}(\TT_{\ell_3})} < \infty$, then it holds that
\begin{align}\label{eq:lemma:optimal:one}
	\# \RR_\ell \leq C_1 \big( C_2 \norm{\phi}{\Approx_{s} (\TT_{\ell_3})} \big)^{1/s} \eta_\ell^{-1/s}
\end{align}
as well as the D\"orfler marking criterion
\begin{align}\label{eq:lemma:optimal:two}
	\theta \eta_\ell^2 \leq \eta_\ell(\RR_\ell)^2.
\end{align}
The constant $C_2$ depends only on $\theta,\widehat{\gamma}_0$, and on the constants in Lemma~\ref{lemma:axioms}. 
The constant $C_1$ depends additionally on $\# \TT_{\ell_3}$ and $\TT_0$. \qed
\end{lemma}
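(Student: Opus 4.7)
The plan is to exploit the approximation class $\Approx_s(\TT_{\ell_3})$ to locate an optimal mesh $\TT_\varepsilon$ whose estimator is sufficiently small, then form the overlay $\widehat{\TT} := \TT_\varepsilon \oplus \TT_\ell$ and invoke the immediately preceding optimality-of-D\"orfler-marking lemma on the pair $(\TT_\ell, \widehat{\TT})$ to force the D\"orfler criterion~\eqref{eq:lemma:optimal:two} on the set $\RR_\ell := \omega_\ell(\TT_\ell \setminus \widehat{\TT})$. The cardinality bound~\eqref{eq:lemma:optimal:one} will then follow by combining the overlay estimate~\eqref{mesh:overlay}, the son bound~\eqref{mesh:sons}, and the quasi-local enlargement implicit in the discrete reliability estimate of Lemma~\ref{lemma:axioms}(iii).

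I may assume $\eta_\ell > 0$, since otherwise $\RR_\ell := \emptyset$ satisfies both~\eqref{eq:lemma:optimal:one} and~\eqref{eq:lemma:optimal:two} trivially. Set $\varepsilon := (\gamma_{\rm opt}/\Cmon)\,\eta_\ell$ and $N := \lceil (\|\phi\|_{\Approx_s(\TT_{\ell_3})}/\varepsilon)^{1/s} \rceil$. The definition~\eqref{eq:approxclass} of the approximation class supplies some $\TT_\varepsilon \in \T_N(\TT_{\ell_3})$ with $\eta_\varepsilon \leq (N+1)^{-s}\|\phi\|_{\Approx_s(\TT_{\ell_3})} \leq \varepsilon$. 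Furthermore, quasi-monotonicity (Lemma~\ref{lemma:m:uniform}(b)) applied between $\TT_{\ell_3}$ and $\TT_\ell$ yields $\|\phi\|_{\Approx_s(\TT_{\ell_3})} \geq \eta_{\ell_3} \geq \eta_\ell/\Cmon > \varepsilon$ (using $\gamma_{\rm opt} < 1$), which forces $N \geq 1$ and hence
\begin{align*}
    N \,\leq\, 2\,(\Cmon/\gamma_{\rm opt})^{1/s}\,\|\phi\|_{\Approx_s(\TT_{\ell_3})}^{1/s}\,\eta_\ell^{-1/s}.
\end{align*}

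Forming $\widehat{\TT} := \TT_\varepsilon \oplus \TT_\ell$, I observe that $\TT_\ell, \TT_\varepsilon \in \refine(\TT_{\ell_3})$ (the first by Lemma~\ref{lemma:m:uniform}(c), the second by construction), so the overlay estimate~\eqref{mesh:overlay} applied with base mesh $\TT_{\ell_3}$ yields $\# \widehat{\TT} - \# \TT_\ell \leq \# \TT_\varepsilon - \# \TT_{\ell_3} \leq N$. Since $\TT_{\ell_3} \in \refine(\widehat{\TT}_0)$, also $\widehat{\TT} \in \refine(\widehat{\TT}_0) \cap \refine(\TT_\varepsilon)$, and quasi-monotonicity gives $\eta_{\widehat{\TT}} \leq \Cmon\,\eta_\varepsilon \leq \Cmon\,\varepsilon = \gamma_{\rm opt}\,\eta_\ell$. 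The optimality-of-D\"orfler-marking lemma applied to the pair $(\TT_\ell, \widehat{\TT})$ then produces precisely~\eqref{eq:lemma:optimal:two} for the choice $\RR_\ell := \omega_\ell(\TT_\ell \setminus \widehat{\TT})$.

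For~\eqref{eq:lemma:optimal:one}, the son bound~\eqref{mesh:sons} gives $\#(\TT_\ell \setminus \widehat{\TT}) \leq \# \widehat{\TT} - \# \TT_\ell \leq N$, while the quasi-locality inherent in Lemma~\ref{lemma:axioms}(iii) (together with shape regularity, controlling the size of the patch) yields $\# \RR_\ell \leq \Crel\,\#(\TT_\ell \setminus \widehat{\TT}) \leq \Crel\,N$; substituting the bound on $N$ from the previous paragraph yields~\eqref{eq:lemma:optimal:one} with $C_2 := \Cmon/\gamma_{\rm opt}$ and $C_1$ proportional to $\Crel$. The essential non-trivial input is the overlay estimate~\eqref{mesh:overlay}, which permits the combinatorial size of $\widehat{\TT}$ to be controlled in terms of $N$ (and hence of $\|\phi\|_{\Approx_s(\TT_{\ell_3})}$) rather than the a priori much larger $\# \TT_\ell$; the remaining work is careful bookkeeping of constants and verifying that the auxiliary meshes $\TT_\varepsilon$ and $\widehat{\TT}$ lie in $\refine(\widehat{\TT}_0)$, so that quasi-monotonicity applies.
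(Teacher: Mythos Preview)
The paper does not supply its own proof of this lemma; the statement is quoted from~\cite[Lemma~28]{helmholtz} and closed with \qed. Your argument is correct and is precisely the standard construction used in the axioms-of-adaptivity framework (cf.~\cite[Section~4]{axioms} and~\cite{helmholtz}): choose $\varepsilon$ proportional to $\eta_\ell$, extract a near-optimal mesh $\TT_\varepsilon \in \T_N(\TT_{\ell_3})$ from the approximation class, form the overlay $\widehat{\TT} = \TT_\varepsilon \oplus \TT_\ell$, apply quasi-monotonicity (Lemma~\ref{lemma:m:uniform}(b)) to obtain $\eta_{\widehat{\TT}} \le \gamma_{\rm opt}\,\eta_\ell$, and then invoke the optimality-of-D\"orfler-marking lemma on the pair $(\TT_\ell,\widehat{\TT})$. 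The cardinality bound follows exactly as you indicate from~\eqref{mesh:overlay},~\eqref{mesh:sons}, and the patch bound in Lemma~\ref{lemma:axioms}(iii). Your verification that $\TT_\varepsilon, \TT_\ell, \widehat{\TT} \in \refine(\widehat{\TT}_0)$ (so that quasi-monotonicity and the existence of all discrete solutions are guaranteed by Lemma~\ref{lemma:m:uniform}(a),(c)) is the only point that genuinely distinguishes the compactly-perturbed setting from the elliptic one, and you handle it correctly.
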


\bigskip
\noindent
{\bf Proof of~(\ref{eq:optimal}).} \quad 
The implication ``$\Longleftarrow$'' in~\eqref{eq:optimal} follows by definition of the approximation class, cf.~\cite[Proposition 4.15]{axioms}. 
Hence, we focus on the converse implication ``$\Longrightarrow $''. 
We suppose that $\norm{\phi}{\Approx_s} < \infty$. With $\ell_3$ being the constant from Lemma~\ref{lemma:m:uniform}, define $\lopt := \max\{\ell_1,\ell_3 \}$.
Then, \cite[Lemma 23]{helmholtz} implies that $\norm{\phi}{\Approx_s(\TT_{\lopt})} < \infty$. Further, let $\MM_\ell$ 
denote the set of marked elements in the $\ell$-th step of Algorithm~\ref{algorithm}. For $\ell \geq \lopt$, Lemma~\ref{lemma:optimal} 
provides a set $\RR_\ell \subseteq \TT_\ell$
with~\eqref{eq:lemma:optimal:one}--\eqref{eq:lemma:optimal:two}. According to the minimality of $\MM_\ell$ 
(cf. Step(iv) and Step(v) of Algorithm~\ref{algorithm}), we obtain that
\begin{align}\label{eq:optimality:proof:one}
	\# \MM_\ell \lesssim \# \RR_\ell \lesssim \norm{\phi}{\Approx_{s}(\TT_{\lopt})}^{1/s} \, \eta_\ell^{-1/s} \quad \text{for all } \ell \geq \lopt.
\end{align}
Linear convergence~\eqref{eq:linearconvergence} yields that $\eta_\ell \lesssim  \qlin^{\ell-j} \eta_j$ 
for all $\lopt \leq j \leq \ell$ and hence
\begin{align}\label{eq:optimality:proof:two}
	\eta_j^{-1/s} \lesssim \qlin^{(\ell - j)/s} \eta_\ell^{-1/s} \quad \text{for all } \lopt \leq j \leq \ell.
\end{align}
The mesh-closure estimate~\eqref{mesh:mesh-closure} yields that
\begin{align}\label{eq:optimality:proof:three}
\# \TT_\ell  - \# \TT_0 +1 \leq \Cmesh \sum_{j=0}^{\ell-1} \# \MM_j.
\end{align}
Together with $C:= \max_{j=0,\ldots,\lopt} \frac{\# \MM_j}{\# \MM_{\lopt}}$, we obtain that
\begin{align}\label{eq:optimality:proof:four}
	\sum_{j=0}^{\ell-1} \# \MM_j = \sum_{j=0}^{\lopt} \# \MM_j + \sum_{j=\lopt}^{\ell-1} \# \MM_j \leq (\lopt C+ 1) \sum_{j=\lopt}^{\ell-1} \# \MM_j.
\end{align}
Combining~\eqref{eq:optimality:proof:one}--\eqref{eq:optimality:proof:four} as well as $ 0 < q:= \qlin^{1/s} < 1$, we exploit the 
geometric series and reveal that
\begin{align*}
\# \TT_\ell  - \# \TT_0 +1 &\stackrel{\eqref{eq:optimality:proof:three}}{\lesssim} \sum_{j=0}^{\ell-1} \# \MM_j 
\stackrel{\eqref{eq:optimality:proof:four}}{\lesssim} \sum_{j=\lopt}^{\ell-1} \# \MM_j
\stackrel{\eqref{eq:optimality:proof:one}}{\lesssim} \norm{\phi}{\Approx_{s}(\TT_{\lopt})}^{1/s} \sum_{j=\lopt}^{\ell-1} \eta_j^{-1/s} \\
&\stackrel{\eqref{eq:optimality:proof:two}}{\lesssim} \norm{\phi}{\Approx_{s}(\TT_{\lopt})}^{1/s} \eta_\ell^{-1/s} \sum_{j=\lopt}^{\ell-1} \qlin^{(\ell - j)/s} 
~\lesssim~ \norm{\phi}{\Approx_{s}(\TT_{\lopt})}^{1/s} \eta_\ell^{-1/s}.
\end{align*} 
Rearranging the terms, we conclude that $ \eta_\ell \leq \Copt  \big( \# \TT_\ell  - \# \TT_0 +1 \big)^{-s}$.
The constant $\Copt>0$ is given by
\begin{align*}
	\Copt = \Clin \, C_2 \, \norm{\phi}{\Approx_s(\TT_{\ell_4})} \, \Big( \frac{2 \Cmark \Cmesh C_1 (\ell_4 C + 1 )}{1-\qlin^{1/s}} \Big)^{s},
\end{align*}
where $C_1,C_2$ are the constants from Lemma~\ref{lemma:optimal}. This concludes the proof. \qed

\bigskip
\thanks{{\bf Acknowledgements.} 
	The author AB  acknowledges support by the EPSRC under grant EP/P013791/1.
	In addition, the authors AH and DP acknowledge support of the the Austria Science Fund (FWF) through the research project \emph{Optimal adaptivity for BEM and FEM-BEM coupling} (grant P27005), and the research program \emph{Taming complexity in partial differential systems} (grant SFB F65).
}

\bibliographystyle{alpha}
\bibliography{literature}

\newcommand{\etalchar}[1]{$^{#1}$}
\begin{thebibliography}{vWGBA15}

\bibitem[AFF{\etalchar{+}}13]{affkp}
Markus Aurada, Michael Feischl, Thomas F{\"u}hrer, Michael Karkulik, and Dirk
  Praetorius.
\newblock Efficiency and optimality of some weighted-residual error estimator
  for adaptive {2D} boundary element methods.
\newblock {\em J. Comput. Appl. Math.}, 13:305--332, 2013.

\bibitem[AFF{\etalchar{+}}17]{invest}
Markus Aurada, Michael Feischl, Thomas F{\"u}hrer, Michael Karkulik, Markus
  Melenk, and Dirk Praetorius.
\newblock Local inverse estimates for non-local boundary integral operators.
\newblock {\em Math. Comp.}, 86:2651--2686, 2017.

\bibitem[BDD04]{bdd}
Peter Binev, Wolfgang Dahmen, and Ronald DeVore.
\newblock Adaptive finite element methods with convergence rates.
\newblock {\em Numer. Math.}, 97(2):219--268, 2004.

\bibitem[BHP17]{helmholtz}
Alex Bespalov, Alexander Haberl, and Dirk Praetorius.
\newblock Adaptive {FEM} with coarse initial mesh guarantees optimal
  convergence rates for compactly perturbed elliptic problems.
\newblock {\em Comput. Methods Appl. Mech. Engrg.}, 317:318--340, 2017.

\bibitem[BS08]{brennerscott}
Susanne~C. Brenner and L.~Ridgway Scott.
\newblock {\em The mathematical theory of finite element methods}.
\newblock Springer, New York, third edition, 2008.

\bibitem[Car96]{cc96}
Carsten Carstensen.
\newblock Efficiency of a posteriori {BEM}-error estimates for first-kind
  integral equations on quasi-uniform meshes.
\newblock {\em Math. Comp.}, 65(213):69--84, 1996.

\bibitem[CFPP14]{axioms}
Carsten Carstensen, Michael Feischl, Marcus Page, and Dirk Praetorius.
\newblock Axioms of adaptivity.
\newblock {\em Comput. Math. Appl.}, 67(6):1195--1253, 2014.

\bibitem[CK83]{MR700400}
David~L. Colton and Rainer Kress.
\newblock {\em Integral equation methods in scattering theory}.
\newblock John Wiley \& Sons, New York, 1983.

\bibitem[CKNS08]{ckns}
J.~Manuel Cascon, Christian Kreuzer, Ricardo~H. Nochetto, and Kunibert~G.
  Siebert.
\newblock Quasi-optimal convergence rate for an adaptive finite element method.
\newblock {\em SIAM J. Numer. Anal.}, 46(5):2524--2550, 2008.

\bibitem[CMPS04]{cmps}
Carsten Carstensen, Matthias Maischak, Dirk Praetorius, and Ernst~P. Stephan.
\newblock Residual-based a posteriori error estimate for hypersingular equation
  on surfaces.
\newblock {\em Numer. Math.}, 97(3):397--425, 2004.

\bibitem[CMS01]{cms}
Carsten Carstensen, Matthias Maischak, and Ernst~P. Stephan.
\newblock A posteriori error estimate and {$h$}-adaptive algorithm on surfaces
  for {S}ymm's integral equation.
\newblock {\em Numer. Math.}, 90(2):197--213, 2001.

\bibitem[CS87]{cs87}
Martin Costabel and Ernst~P. Stephan.
\newblock On the convergence of collocation methods for boundary integral
  equations on polygons.
\newblock {\em Math. Comp.}, 49(180):461--478, 1987.

\bibitem[CS95]{cs95}
Carsten Carstensen and Ernst~P. Stephan.
\newblock A posteriori error estimates for boundary element methods.
\newblock {\em Math. Comp.}, 64(210):483--500, 1995.

\bibitem[CWGLS12]{MR2916382}
Simon~N. Chandler-Wilde, Ivan~G. Graham, Stephen Langdon, and Euan~A. Spence.
\newblock Numerical-asymptotic boundary integral methods in high-frequency
  acoustic scattering.
\newblock {\em Acta Numer.}, 21:89--305, 2012.

\bibitem[D{\"o}r96]{doerfler}
Willy D{\"o}rfler.
\newblock A convergent adaptive algorithm for {P}oisson's equation.
\newblock {\em SIAM J. Numer. Anal.}, 33(3):1106--1124, 1996.

\bibitem[FFK{\etalchar{+}}14]{ffkmp:part1}
Michael Feischl, Thomas F{\"u}hrer, Michael Karkulik, Jens~Markus Melenk, and
  Dirk Praetorius.
\newblock Quasi-optimal convergence rates for adaptive boundary element methods
  with data approximation, part {I}: weakly-singular integral equation.
\newblock {\em Calcolo}, 51(4):531--562, 2014.

\bibitem[FFK{\etalchar{+}}15]{ffkmp:part2}
Michael Feischl, Thomas F\"uhrer, Michael Karkulik, {Jens Markus} Melenk, and
  Dirk Praetorius.
\newblock Quasi-optimal convergence rates for adaptive boundary element methods
  with data approximation, part {II}: {H}ypersingular integral equation.
\newblock {\em Electron. Trans. Numer. Anal.}, 44:153--176, 2015.

\bibitem[FFP14]{ffp}
Michael Feischl, Thomas F{\"u}hrer, and Dirk Praetorius.
\newblock Adaptive {FEM} with optimal convergence rates for a certain class of
  nonsymmetric and possibly nonlinear problems.
\newblock {\em SIAM J. Numer. Anal.}, 52(2):601--625, 2014.

\bibitem[FGHP16]{fghp16}
Michael Feischl, Gregor Gantner, Alexander Haberl, and Dirk Praetorius.
\newblock Adaptive 2{D} {IGA} boundary element methods.
\newblock {\em Eng. Anal. Bound. Elem.}, 62:141--153, 2016.

\bibitem[FGHP17]{fghp17}
Michael Feischl, Gregor Gantner, Alexander Haberl, and Dirk Praetorius.
\newblock Optimal convergence for adaptive {IGA} boundary element methods for
  weakly-singular integral equations.
\newblock {\em Numer. Math.}, 136(1):147--182, 2017.

\bibitem[FGP15]{igabem}
Michael Feischl, Gregor Gantner, and Dirk Praetorius.
\newblock Reliable and efficient a posteriori error estimation for adaptive
  {IGA} boundary element methods for weakly-singular integral equations.
\newblock {\em Comput. Methods Appl. Mech. Engrg.}, 290:362--386, 2015.

\bibitem[FHPS18]{abem+solve}
Thomas F\"uhrer, Alexander Haberl, Dirk Praetorius, and Stefan Schimanko.
\newblock Adaptive {B}{E}{M} with inexact {P}{C}{G} solver yields almost
  optimal computational costs.
\newblock {\em Numer. Math.}, in print, 2018.

\bibitem[FKMP13]{fkmp}
Michael Feischl, Michael Karkulik, {J.\ Markus} Melenk, and Dirk Praetorius.
\newblock Quasi-optimal convergence rate for an adaptive boundary element
  method.
\newblock {\em SIAM J. Numer. Anal.}, 51:1327--1348, 2013.

\bibitem[Gan17]{gantnerphd}
Gregor Gantner.
\newblock {\em Optimal adaptivity for splines in finite and boundary element
  methods}.
\newblock PhD thesis, TU Wien, Institute for Analysis and Scientific Computing,
  Wien, 2017.

\bibitem[GBB{\etalchar{+}}15]{bempp2}
{Samuel P.} Groth, {Anthony J.} Baran, Timo Betcke, Stephan Havemann, and
  Wojciech \'Smigaj.
\newblock The boundary element method for light scattering by ice crystals and
  its implementation in {BEM}++.
\newblock {\em J. Quant. Spectrosc. Radiat. Transfer}, 167:40 -- 52, 2015.

\bibitem[Geo08]{Geo08}
Emmanuil~H. Georgoulis.
\newblock Inverse-type estimates on {$hp$}-finite element spaces and
  applications.
\newblock {\em Math. Comp.}, 77(261):201--219, 2008.

\bibitem[GHP17]{MR3723732}
Gregor Gantner, Daniel Haberlik, and Dirk Praetorius.
\newblock Adaptive {IGAFEM} with optimal convergence rates: hierarchical
  {B}-splines.
\newblock {\em Math. Models Methods Appl. Sci.}, 27(14):2631--2674, 2017.

\bibitem[GHS05]{ghs}
Ivan~G. Graham, Wolfgang Hackbusch, and Stefan~A. Sauter.
\newblock Finite elements on degenerate meshes: inverse-type inequalities and
  applications.
\newblock {\em IMA J. Numer. Anal.}, 25(2):379--407, 2005.

\bibitem[GM06]{MR2257114}
Ivan~G. Graham and William McLean.
\newblock Anisotropic mesh refinement: the conditioning of {G}alerkin boundary
  element matrices and simple preconditioners.
\newblock {\em SIAM J. Numer. Anal.}, 44(4):1487--1513, 2006.

\bibitem[GS18]{epsbuch}
Joachim Gwinner and Ernst~Peter Stephan.
\newblock {\em Advanced boundary element methods}.
\newblock Springer, Cham, 2018.

\bibitem[Hab18]{haberlphd}
Alexander Haberl.
\newblock {\em On adaptive FEM and for indefinite and nonlinear problems}.
\newblock PhD thesis, TU Wien, Institute for Analysis and Scientific Computing,
  Wien, 2018.

\bibitem[KPP13]{kpp}
Michael Karkulik, David Pavlicek, and Dirk Praetorius.
\newblock On 2{D} newest vertex bisection: optimality of mesh-closure and
  {$H^1$}-stability of {$L_2$}-projection.
\newblock {\em Constr. Approx.}, 38(2):213--234, 2013.

\bibitem[McL00]{mclean}
William McLean.
\newblock {\em Strongly elliptic systems and boundary integral equations}.
\newblock Cambridge University Press, Cambridge, 2000.

\bibitem[Mel12]{Mel12}
Jens~Markus Melenk.
\newblock Mapping properties of combined field {H}elmholtz boundary integral
  operators.
\newblock {\em SIAM J. Math. Anal.}, 44(4):2599--2636, 2012.

\bibitem[SBA{\etalchar{+}}15]{bempp}
Wojciech \'Smigaj, Timo Betcke, Simon Arridge, Joel Phillips, and Martin
  Schweiger.
\newblock Solving boundary integral problems with {BEM}++.
\newblock {\em ACM Trans. Math. Software}, 41(2):Art. 6, 40, 2015.

\bibitem[SS11]{sauterschwab}
Stefan~A. Sauter and Christoph Schwab.
\newblock {\em Boundary element methods}.
\newblock Springer, Berlin, 2011.

\bibitem[Ste07]{stevenson07}
Rob Stevenson.
\newblock Optimality of a standard adaptive finite element method.
\newblock {\em Found. Comput. Math.}, 7(2):245--269, 2007.

\bibitem[Ste08a]{steinbach}
Olaf Steinbach.
\newblock {\em Numerical approximation methods for elliptic boundary value
  problems}.
\newblock Springer, New York, 2008.

\bibitem[Ste08b]{stevenson08}
Rob Stevenson.
\newblock The completion of locally refined simplicial partitions created by
  bisection.
\newblock {\em Math. Comp.}, 77(261):227--241, 2008.

\bibitem[Ste13]{st_hypsing}
Olaf Steinbach.
\newblock Boundary integral equations for {H}elmholtz boundary value and
  transmission problems.
\newblock In {\em Direct and inverse problems in wave propagation and
  applications}, volume~14 of {\em Radon Ser. Comput. Appl. Math.}, pages
  253--292. De Gruyter, Berlin, 2013.

\bibitem[Tar07]{tartar}
Luc Tartar.
\newblock {\em An introduction to {S}obolev spaces and interpolation spaces}.
\newblock Springer, Berlin, 2007.

\bibitem[Tri83]{triebel83}
Hans Triebel.
\newblock {\em Theory of function spaces}.
\newblock Birkh\"auser, Basel, 1983.

\bibitem[Tri92]{triebel92}
Hans Triebel.
\newblock {\em Theory of function spaces. {II}}.
\newblock Birkh\"auser, Basel, 1992.

\bibitem[Tso13]{gantumur}
Gantumur Tsogtgerel.
\newblock Adaptive boundary element methods with convergence rates.
\newblock {\em Numer. Math.}, 124(3):471--516, 2013.

\bibitem[Tso17]{gant}
Gantumur Tsogtgerel.
\newblock Convergence rates of adaptive methods, {B}esov spaces, and multilevel
  approximation.
\newblock {\em Found. Comput. Math.}, 17:917--956, 2017.

\bibitem[vWGBA15]{bempp3}
Elwin van't Wout, Pierre G\'elat, Timo Betcke, and Simon Arridge.
\newblock A fast boundary element method for the scattering analysis of
  high-intensity focused ultrasound.
\newblock {\em J. Acoust. Soc. Am.}, 138(5):2726--2737, 2015.

\end{thebibliography}

\end{document}